\documentclass{article}
\usepackage[T1]{fontenc}
\usepackage[margin=1in]{geometry}
\usepackage{graphicx}
\usepackage{amsmath, amsfonts, amssymb, amsthm, stmaryrd, bm, bbm, doi, authblk}
\usepackage{pgfplots}
\usepgfplotslibrary{colormaps}
\usepackage{hyperref,cleveref,csquotes,color,enumitem}
\hypersetup{colorlinks=true,breaklinks=true,urlcolor=black,linkcolor=black,citecolor=black}

\newtheorem{theorem}{Theorem}[section]
\newtheorem{lemma}[theorem]{Lemma}
\crefname{lemma}{Lemma}{Lemmas}
\newtheorem{proposition}[theorem]{Proposition}
\crefname{proposition}{Proposition}{Propositions}
\newtheorem{corollary}[theorem]{Corollary}
\crefname{corollary}{Corollary}{Corollaries}
\theoremstyle{remark}
\newtheorem{example}[theorem]{Example}
\crefname{example}{Example}{Examples}
\newtheorem{remark}[theorem]{Remark}
\crefname{remark}{Remark}{Remarks}
\AtBeginEnvironment{remark}{
\pushQED{\qed}%
}
\AtEndEnvironment{remark}{\popQED}

\newcommand{\bN}{\mathbb{N}}
\newcommand{\bZ}{\mathbb{Z}}

\newcommand{\bR}{\mathbb{R}}
\newcommand{\abs}[1]{\left|#1\right|}
\newcommand{\card}[1]{\#\left\{#1\right\}}

\newcommand{\cv}[1]{\underset{#1}{\;\longrightarrow}\;}

\newcommand{\cO}{\mathcal{O}}
\newcommand{\cC}{\mathcal{C}}

\newcommand{\cI}{\mathcal{I}}
\newcommand{\unmatched}{{\textrm{unmatched}}}
\newcommand{\bP}{\mathbb{P}}
\newcommand{\prob}[1]{\bP{\left(#1\right)}}
\newcommand{\probcond}[2]{\bP{\left(\left. #1 \,\right|\, #2 \right)}}

\newcommand{\expec}[1]{\mathbb{E}{\left[#1\right]}}
\newcommand{\expecond}[2]{\mathbb{E}{\left[\left. #1 \,\right|\, #2 \right]}}

\newcommand{\Leb}{\mathrm{Leb}}
\newcommand{\One}[1]{\mathbbm{1}_{#1}}
\newcommand{\Unif}[1]{\mathrm{Unif}{\left(#1\right)}}

\newcommand{\Exp}[1]{{\rm Exp}{\left(#1\right)}}
\newcommand{\Binomial}[2]{\mathrm{Bin}{\left(#1,#2\right)}}
\newcommand{\fS}{\mathfrak{S}}
\DeclareMathOperator{\id}{id}
\newcommand{\LIS}[1]{\mathrm{LIS}{\left(#1\right)}}
\newcommand{\LDS}[1]{\mathrm{LDS}{\left(#1\right)}}
\newcommand{\Luce}[1]{{\rm Luce}{\left(#1\right)}}
\newcommand{\bth}{\bm{\theta}}
\newcommand{\Perm}[1]{\mathrm{Perm}{\left(#1\right)}}
\def\sample{\mathrm{Sample}}
\newcommand{\bU}{{\mathbf{U}}}
\newcommand{\cP}{\mathcal{P}}

\title{\textsc{The longest increasing subsequences of Luce permutations and permutons}}

\author[$*$]{Jacopo Borga}
\affil[$*$]{Department of Mathematics, Massachusetts Institute of Technology}

\author[$\dagger$]{Victor Dubach}
\affil[$\dagger$]{Institut Camille Jordan, Université Lyon 1}

\date{}

\begin{document}

\maketitle

\begin{abstract}
    We investigate the longest increasing subsequences (LIS) of random permutations distributed under the Luce model with weights $\theta_1^{(n)} \ge \ldots \ge \theta_n^{(n)}$.
    Our first main result provides a general formula for the growth rate of the LIS.
    For example, we get that the LIS of Luce permutations with Sukhatme weights $\theta_k^{(n)} = n-k+1$ grows like $\sqrt{n}$.
    We also give simple conditions for the LIS to be of order $n^\beta$, for any given $\beta\in[1/2,1)$.
    Our second main result states that, if the weights are of the form $\theta_k^{(n)} = f\big( (k-1)/n \big)$ for a sufficiently regular function $f$, then the LIS divided by $2\sqrt{n}$ converges in probability to a constant $K(f)$ described by a variational problem.
    Throughout, we make extensive use of coupling techniques;
    in particular, to prove our second main result, we argue that Luce random permutations behave similarly to random permutations sampled from the ``Luce permuton''.
\end{abstract}

\tableofcontents

\section{Introduction}

\subsection{Longest increasing subsequences of permutations}

For $n\in\bN$, $\sigma\in\fS_n$ a permutation of size $n$, and $k\le n$, we call an {increasing subsequence} of length $k$ in $\sigma$ a sequence $\{ (i_1, \sigma(i_1)) , \ldots , (i_k, \sigma(i_k)) \}$ such that $i_1 < \ldots < i_k$ and $\sigma(i_1) < \ldots < \sigma(i_k)$.
Then, the length of the \emph{longest increasing subsequence} in $\sigma$, or simply LIS of $\sigma$, is the largest integer $k$ such that $\sigma$ contains an increasing subsequence of length $k$.
We let $\LIS{\sigma}$ denote this integer.

The LIS, for instance, provides a natural way to compare rankings through the \emph{Ulam distance}; see \cite[Chapter~6, Section~B, pp.~118--119]{Diaconis_1988} or \cite{Aldous_Diaconis_1999}.
Viewing permutations $\sigma,\tau\in\fS_n$ as maps assigning ranks to items, this distance is given by
\[
    d_{\mathrm U}(\sigma,\tau) := n-\LIS{\sigma\circ\tau^{-1}}.
\]
It is the minimum number of moves needed to transform one ranked list into the other, where a move removes a single item and reinserts it elsewhere.
In particular, $d_{\mathrm U}(\sigma,\mathrm{id})=n-\LIS{\sigma}$, so studying the LIS of a random ranking gives direct information about its distance from the increasing order, a natural measure of disarray.

The study of longest increasing subsequences in random permutations was initiated by Ulam, who asked in \cite{Ulam_61} about the asymptotic behavior of $\ell_n := \expec{\LIS{\sigma_n}}$ when $\sigma_n$ is uniformly distributed over $\fS_n$.
Thanks to major developments \cite{Logan_Shepp_1977,Vershik_Kerov_1977,Aldous_Diaconis_1995}, this question has been solved at the first order:

\begin{theorem}\label{th: LIS uniform 2sqrt n}
    For each $n\in\bN$, let $\sigma_n \sim \Unif{\fS_n}$. 
    Then,
    $
        \frac{1}{\sqrt n} \LIS{\sigma_n} \longrightarrow 2
    $
    as $n\to\infty$
    in probability and with moments.
    In particular, writing $\ell_n := \expec{\LIS{\sigma_n}}$, we get that $\ell_n / \sqrt n \longrightarrow 2$ as $n\to\infty$.
\end{theorem}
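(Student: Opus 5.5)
The plan is to follow Hammersley's Poissonization approach, completed by the Aldous--Diaconis hydrodynamic argument, and then transfer the result to fixed $n$ by de-Poissonization and concentration. \emph{Poissonization:} let $\Pi$ be a unit-rate Poisson point process in $[0,\infty)^2$, and let $L(t)$ be the maximal number of points of $\Pi\cap[0,t]^2$ on an up-right chain. Conditionally on $\Pi([0,t]^2)=N$, the relative order of these points is a uniform permutation of size $N$, so $L(t)\overset{d}{=}\LIS{\sigma_N}$ with $N\sim\mathrm{Poisson}(t^2)$. \emph{Existence of the constant:} the longest chains over $[0,s]^2$ and over $[s,s+t]^2$ concatenate, which gives superadditivity $L(s+t)\ge L(s)+L'(t)$. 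Kingman's subadditive ergodic theorem then yields $L(t)/t\to c$ almost surely and in $L^1$, for a constant $c\in[0,\infty]$.

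The next step is to bound $c$ crudely. The expected number of increasing $k$-subsets of $\sigma_n$ is $\binom nk^2/k!$. Together with Stirling's formula, this gives $\prob{\LIS{\sigma_n}\ge e(1+\varepsilon)\sqrt n}\to0$, so $c\le e$. By Erdős--Szekeres, $\LIS{\sigma}\,\LDS{\sigma}\ge n$, and $\LIS{\sigma_n}$ and $\LDS{\sigma_n}$ have the same law; hence $\expec{\LIS{\sigma_n}}\ge\sqrt n/2$ and $c\ge 1$.

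\emph{Identifying $c=2$:} this is the main obstacle. I would run Hammersley's interacting particle process on the line. The particles at time $t$ are the positions $x$ at which the chain-length function $L(x,t)$, the longest chain in $[0,x]\times[0,t]$, jumps. Each Poisson point $(x,t)$ creates a particle at $x$, and at the same moment the nearest particle to its right is removed. Subadditivity in two variables gives $L(x,t)/\lambda\to \sqrt{xt}\,c$ under scaling, because the Poisson process is invariant under the area-preserving maps $(x,t)\mapsto(ax,t/a)$. The macroscopic density $u=\partial_x L$ then satisfies the conservation law $\partial_t L=\phi(\partial_x L)$. Aldous--Diaconis identify the flux $\phi$ by exhibiting the invariant measures of the process (Poisson with intensity $\rho$ on the line), under which particles move left at rate $1/\rho$. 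This forces $\partial_t L=1/\partial_x L$, so $L(x,t)\propto\sqrt{xt}$ with constant $2$. The delicate part is justifying the hydrodynamic limit via a coupling with the stationary process started from a Poisson configuration on $(-\infty,0]$. An alternative route is to use the Logan--Shepp/Vershik--Kerov variational problem for the RSK shape under Plancherel measure, which gives $\ell_n\ge(2-o(1))\sqrt n$, together with Kim's upper bound; I would fall back on that if the particle argument becomes too technical.

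\emph{De-Poissonization and moments:} $n\mapsto\LIS{\sigma_n}$ can be coupled monotonically, for instance by taking the first $n$ points of an i.i.d.\ uniform sample in $[0,1]^2$. Since $N=t^2+O(t)$ with high probability, the convergence $L(t)/t\to2$ transfers to $\LIS{\sigma_n}/\sqrt n\to2$ in probability. For the moments, I would use the domination $\prob{\LIS{\sigma_n}\ge k}\le \binom nk^2/k!$. This decays superexponentially once $k\ge e^2\sqrt n$, so $(\LIS{\sigma_n}/\sqrt n)^p$ is uniformly integrable. Convergence of moments follows, and in particular $\ell_n/\sqrt n\to2$.
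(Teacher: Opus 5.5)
The paper gives no proof of this statement; it quotes it from Logan--Shepp, Vershik--Kerov and Aldous--Diaconis. Your outline is the Aldous--Diaconis route: Hammersley's Poissonization and superadditivity, then identification of the constant through the Poisson invariant measures of Hammersley's process. So it rests on the same literature as the paper. Be clear, though, that the identification $c=2$, as you present it, is only the heuristic; its rigorous justification is the whole content of the cited work. Also, the quantity that gives $\partial_t L = 1/\partial_x L$ is the flux of particles across a fixed site under the Poisson($\rho$) invariant measure, which is $1/\rho$. A tagged particle's speed is $1/\rho^2$, not $1/\rho$.

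There is one concrete error, and as written it breaks two of your steps. The expected number of increasing $k$-subsequences of $\sigma_n$ is $\binom nk/k!$ (choose the $k$ positions; their values are increasing with probability $1/k!$), equivalently $\binom nk^2 (n-k)!/n!$. It is not $\binom nk^2/k!$. With your expression, the bounds $\binom nk\ge (n/k)^k$ and $k!\le k^k$ give $\binom nk^2/k!\ge (\sqrt n/C^3)^k\to\infty$ for $k\approx C\sqrt n$. So neither $c\le e$ nor the uniform integrability of $(\mathrm{LIS}(\sigma_n)/\sqrt n)^p$ would follow.

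With the correct count you get $\binom nk/k!\le n^k/(k!)^2\le (e^2n/k^2)^k$; this is the computation in the proof of Lemma~\ref{lem: upper bound on density gives upper bound on LIS} with $R=1$. It yields both $c\le e$ and $\mathbb{P}(\mathrm{LIS}(\sigma_n)\ge k)\le e^{-2k}$ for $k\ge e^2\sqrt n$, which is what your de-Poissonization and moment steps need.

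A minor point in the same paragraph: $\mathrm{LIS}\cdot\mathrm{LDS}\ge n$ together with AM--GM gives $\mathrm{LIS}+\mathrm{LDS}\ge 2\sqrt n$, hence $\mathbb{E}\,\mathrm{LIS}(\sigma_n)\ge \sqrt n$ and $c\ge 1$. Your bound $\sqrt n/2$ only gives $c\ge 1/2$. This is harmless, since only $c>0$ is used.
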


The next major breakthrough was brought by Baik--Deift--Johansson, who proved in \cite{Baik_Deift_Johansson_1999} that the fluctuations of $\LIS{\sigma_n}$ were described by the Tracy--Widom distribution.
Recently, Dauvergne and Vir{\'a}g \cite{Dauvergne_Virag_2022} related the \emph{shape} of the longest increasing subsequence to the so-called directed geodesic of the directed landscape.

In recent years, a lot of work has been done to analyze the LIS of random permutations with \emph{non-uniform} distributions.
These encompass, e.g., the Mallows distribution \cite{Mueller_Starr_2013,Bhatnagar_Peled_2015,Basu_Bhatnagar_2017}, the Ewens distribution and other conjugacy-invariant models \cite{Kammoun_2018,Dubach_2025}, permutations sampled from a density on the unit square \cite{Deuschel_Zeitouni_1995,Dubach_2023}, Baxter permutations~\cite{bgsp-26-Tightness}, or permutations sampled from the Brownian separable permutons \cite{Borga_DaSilva_Gwynne_2024,Adhikari_Borga_Budzinski_Silva_Senizergues_2025}.
In this paper, we contribute to this line of research by considering the \emph{Luce distribution}.

\subsection{The Luce distribution}

The Luce distribution describes a basic sampling procedure: draw balls from a weighted urn, one at a time and without replacement, choosing each remaining ball with probability proportional to its weight.
The order of the draws produces a random ranking.

More precisely, let $\bth = (\theta_1, \dots, \theta_n)$ be a sequence of positive weights with total sum $w_n := \theta_1+ \ldots+ \theta_n$.
We say that a random permutation $\tau_n$ is Luce distributed with weights $\bth$, denoted $\tau_n \sim \Luce{\bth}$, if:
\[
\text{for each } \sigma\in\fS_n,\quad
\prob{\tau_n = \sigma} = 
\frac{\theta_{\sigma^{-1}(1)}}{w_n} \times 
\frac{\theta_{\sigma^{-1}(2)}}{w_n - \theta_{\sigma^{-1}(1)}} \times 
\frac{\theta_{\sigma^{-1}(3)}}{w_n - \theta_{\sigma^{-1}(1)} - \theta_{\sigma^{-1}(2)}} \times 
\cdots \times
\frac{\theta_{\sigma^{-1}(n)}}{\theta_{\sigma^{-1}(n)}} \,.
\]
In terms of the urn construction, ball $i$ has weight $\theta_i$, and $\tau_n(i)$ records the draw at which it is selected.
Thus, $\tau_n^{-1}(1),\ldots,\tau_n^{-1}(n)$ are the labels of the balls in their order of selection.
Note that rescaling the weights does not affect the resulting distribution, i.e., $\Luce{\theta_1, \ldots, \theta_n} = \Luce{\alpha\theta_1, \ldots, \alpha\theta_n}$ for any $\alpha>0$, and thus, we can freely rescale the weights as convenient.
Some examples are given by:
\begin{itemize}
    \item The ``Sukhatme'' weights $\theta_k = n-k+1$;
    \item The ``polynomial Sukhatme'' weights $\theta_k = (n-k+1)^\alpha$ for a fixed $\alpha>0$;
    \item The ``exponential Sukhatme'' weights $\theta_k = \alpha^{k}$ for a fixed $\alpha>0$;
    \item The ``expoly-Sukhatme'' weights $\theta_k = \exp{\left[ - {\left( \frac{n-k+1}{n} \right)}^{-\alpha} \right]}$ for a fixed $\alpha>0$; 
    \item The ``Mallows-like'' weights $\theta_k = q_n^k$, for a given $q_n$ (e.g.\ $q_n = e^{-\beta/n}$ for a fixed $\beta\in\bR$, see \Cref{sec: discussion}).
\end{itemize}
An important case that will accompany us throughout the paper is when $\theta_k = f((k-1)/n)$ for some positive function $f$ on $[0,1)$.
This is e.g.\ the case for the Sukhatme, polynomial Sukhatme and expoly-Sukhatme weights above, and also the Mallows-like weights in the ``thermodynamic regime'' $q_n=e^{-\beta/n}$.

The above distribution was introduced by Luce (see, e.g., \cite{Luce_1977}) to model rankings of data given to subjects in psychophysics experiments.
This elementary sampling scheme, known as \emph{successive sampling}, is a standard model in survey sampling~\cite{Gordon_1983}.
It also underlies models used in horse-race betting to predict finishing orders from winning probabilities~\cite{Harville_1973}.
Another important occurrence is the stationary distribution of the \emph{Tsetlin library}: at each step, book $i$ is chosen with probability $\theta_i/w_n$ and moved to the front of the shelf; at stationarity, the shelf order has the law of $\tau_n^{-1}$~\cite[Section~2.2.3]{chatterjee2024enumerative}.

As basic as this sampling mechanism is, many natural features of the resulting rankings remain to be understood.
We hope that our approach to studying the LIS, and hence Ulam distance, will also help in the study of other statistics of applied interest.
Recently, Chatterjee--Diaconis--Kim analyzed in \cite{chatterjee2024enumerative} the distributions of $(\tau_n^{-1}(1), \ldots, \tau_n^{-1}(k))$ (the labels on the first $k$ balls) and of $(\tau_n^{-1}(n), \ldots, \tau_n^{-1}(n-k+1))$ (the labels on the last $k$ balls) for $k\in\bN$.
They brought attention to a range of questions which remained open, such as:
    ``What is the asymptotic behavior of the length of the longest increasing subsequence of $\tau_n$?''.
Later, the first author here together with Chatterjee and Diaconis computed in \cite{Borga_Chatterjee_Diaconis_2025} the \emph{permuton limit} of $\tau_n$ (see the next section).
In addition, they computed the local limit of the model and provided CLTs for consecutive pattern counts and for the number of inversions.
Our goal here is to settle the question of the LIS of $\tau_n$.

\begin{remark}
    Regarding the definition of the Luce distribution, we adopt here the same convention as in \cite{Borga_Chatterjee_Diaconis_2025}, which is the inverse of the convention in \cite{chatterjee2024enumerative}.
    Nevertheless, this choice does not affect our results since the LIS is invariant under the inverse map.
\end{remark}

Throughout our proofs, we will heavily rely on an alternative construction of $\tau_n$ using exponential random variables.
If $(y_1, \dots, y_n)$ is a sequence of pairwise distinct real numbers, it shares its relative ordering with a unique permutation $\sigma = (\sigma_1, \dots, \sigma_n)$ in $\fS_n$, that is,
$
\sigma_i = \sum_{1\le j\le n} \One{y_j \le y_i} 
$
for each $i\in\{1,\ldots,n\}$.
We let $\Perm{y_1, \ldots, y_n}$ denote this permutation, and call it the \emph{permutation induced by $(y_1, \dots, y_n)$}.
The next lemma can be found in \cite[Theorem~2.1]{chatterjee2024enumerative}.

\begin{lemma}[Exponential representation]\label{lem: exponential representation}
    Let $(\theta_i)_{1\le i\le n}$ be a sequence of positive numbers.
    Let $(E_i)_{1\le i\le n}$ be a sequence of independent exponential random variables with parameters $(\theta_i)_{1\le i\le n}$ (i.e., with means $(\theta_i^{-1})_{1\le i\le n}$).
    Then, the random permutation  $\Perm{E_1, \dots, E_n}$ follows the $\Luce{\theta_1, \dots, \theta_n}$ distribution.
\end{lemma}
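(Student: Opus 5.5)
We will check directly that the law of $\Perm{E_1,\dots,E_n}$ matches the product formula defining $\Luce{\theta_1,\dots,\theta_n}$. The mechanism is the memoryless property of exponential variables together with the ``competing exponentials'' fact. First note that the $E_i$ are almost surely pairwise distinct, since they have continuous distributions, so $\Perm{E_1,\dots,E_n}$ is well defined almost surely. Set $\sigma=\Perm{E_1,\dots,E_n}$. Then $\sigma(i)$ is the rank of $E_i$, so $\sigma^{-1}(1),\sigma^{-1}(2),\dots$ are the indices of the $E_i$ listed in increasing order. In urn language, ball $i$ ``rings'' at time $E_i$, and balls are drawn in the order they ring.

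Fix $\pi\in\fS_n$ and write $a_k:=\pi^{-1}(k)$. We must compute $\prob{E_{a_1}<E_{a_2}<\dots<E_{a_n}}$. The cleanest route is to integrate directly. The probability equals
\[
\int_{0<t_1<\dots<t_n} \prod_{k=1}^n \theta_{a_k} e^{-\theta_{a_k} t_k}\,dt_1\cdots dt_n .
\]
We integrate out $t_n$ first, then $t_{n-1}$, and so on, writing $S_k:=\theta_{a_k}+\dots+\theta_{a_n}$.

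Integrating $t_n$ over $(t_{n-1},\infty)$ gives $e^{-\theta_{a_n}t_{n-1}}$. Combining this with the $t_{n-1}$ factor and integrating over $(t_{n-2},\infty)$ gives $\frac{\theta_{a_{n-1}}}{S_{n-1}}e^{-S_{n-1}t_{n-2}}$. By induction, after integrating $t_n,\dots,t_{k+1}$ the remaining integrand in $t_k$ is
\[
\prod_{j>k}\frac{\theta_{a_j}}{S_j}\cdot \theta_{a_k}e^{-S_k t_k}.
\]
Integrating finally $t_1$ over $(0,\infty)$ produces
\[
\prod_{k=1}^n \frac{\theta_{a_k}}{S_k}.
\]
Since $S_k=w_n-\theta_{a_1}-\dots-\theta_{a_{k-1}}$, this is exactly the defining product for $\prob{\tau_n=\pi}$.

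Alternatively, one can argue inductively. The variable $\min_i E_i$ is $\Exp{w_n}$, and it is attained at index $i$ with probability $\theta_i/w_n$. Conditionally on $\{$the minimum is $E_{a_1}=t\}$, the overshoots $E_j-t$ for $j\ne a_1$ are again independent $\Exp{\theta_j}$ by memorylessness, and we then recurse on $n-1$ variables. The only delicate point is justifying this conditioning rigorously, which is why the explicit integral computation above is preferable. None of the steps is a real obstacle; the bookkeeping of the partial sums $S_k$ in the iterated integral is the main thing to get right.
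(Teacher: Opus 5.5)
Your proof is correct. Almost surely the event $\{\Perm{E_1,\dots,E_n}=\pi\}$ is exactly $\{E_{a_1}<\dots<E_{a_n}\}$. The iterated integral does reduce to $\prod_{k}\theta_{a_k}/S_k$ with $S_1=w_n$, and since $S_k=w_n-\theta_{a_1}-\dots-\theta_{a_{k-1}}$, this is the defining Luce product.

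The paper does not prove this lemma at all; it imports it from \cite[Theorem~2.1]{chatterjee2024enumerative}. So your computation is a self-contained verification of a cited fact, not an alternative to an argument in the text.

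Between your two routes, the direct integral is indeed the one to write down rigorously. It can be shortened further with the substitution $s_k=t_k-t_{k-1}$ (with $t_0=0$), which has Jacobian $1$ and maps the simplex onto $(0,\infty)^n$. Summation by parts gives $\sum_k\theta_{a_k}t_k=\sum_k S_k s_k$, so the integral factorizes immediately into $\prod_k\theta_{a_k}\prod_k S_k^{-1}$.

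That same substitution also shows that, conditionally on the order, the spacings are independent $\Exp{S_k}$ variables. This is precisely the content of the memoryless recursion you sketched, obtained without any delicate conditioning.
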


\subsection{Permuton limits and samples}

Permutons were formally introduced in \cite{Hoppen_Kohayakawa_Moreira_Rath_MenezesSampaio_2013} as scaling limits for sequences of permutations.
A probability measure $\mu$ on the unit square $[0,1]^2$ is called a permuton if both its marginals are uniform, that is, 
$
\mu([0,t]\times[0,1]) = \mu([0,1]\times[0,t]) = t $
for all $t\in[0,1]$.
Any permutation $\sigma \in \fS_n$ can be mapped to a permuton $\mu_\sigma$ with density
\[
\frac{\rm d\mu_\sigma}{\rm d\Leb} := n \sum_{1\le i\le n} \One{[\frac{i-1}{n} , \frac{i}{n}] \times [\frac{\sigma(i)-1}{n} , \frac{\sigma(i)}{n}]} \,,
\]
and we say that a sequence of permutations $(\sigma_n)_{n\in\bN}$ converges to a permuton $\mu$ if $(\mu_{\sigma_n})_{n\in\bN}$ converges to $\mu$ weakly.
The theory of permutons has been studied quite intensively over the past decade; see, e.g., \cite{bfw-permutons-survey} for an overview.

Any given permuton $\mu$ gives rise to an important sequence of random permutations,
sometimes called \emph{$\mu$-random permutations} or \emph{permuton samples}.
Let $\big((U_i,V_i)\big)_{1\le i\le n}$
be a sequence of i.i.d.~$\mu$-distributed random points, and let $\big((U_{(i)}, V_{(i)})\big)_{1\le i\le n}$ be its a.s.\ unique reordering such that $U_{(1)} < \ldots < U_{(n)}$.
Then, we say that the random permutation $\Perm{V_{(1)}, \ldots, V_{(n)}}$ is sampled from $\mu$, and we let $\sample_n(\mu)$ denote its law.
We often write $\sigma_{n,\mu}$ for an $n$-sized random permutation sampled from $\mu$.
It was shown in \cite{Hoppen_Kohayakawa_Moreira_Rath_MenezesSampaio_2013} that, for any given permuton $\mu$, the sequence of random permutations $(\sigma_{n,\mu})_{n\in\bN}$ converges in probability to the permuton $\mu$, and that the ``pattern densities'' 
$
\prob{\sigma_{n,\mu} = \sigma} ,
$
for $n\in\bN$ and $\sigma\in\fS_n$, characterize the permuton $\mu$.

The permuton limits of Luce random permutations were computed in \cite{Borga_Chatterjee_Diaconis_2025}.
The context is the following:
for each $n\in\bN$, we consider a sequence of positive weights $\bth^{(n)} = (\theta_1^{(n)},\dots,\theta_n^{(n)})$ and define a function $f_n(y) := \theta_{\lfloor yn\rfloor+1}^{(n)}$ for $y\in[0,1)$.

\begin{theorem}[\cite{Borga_Chatterjee_Diaconis_2025}]\label{th: Luce limit permuton}
    For each $n\in\bN$, let $\tau_n$ be a random permutation with law $\Luce{\bth^{(n)}}$.
    Assume that $(f_n)_{n\in\bN}$ converges to some positive, finite, measurable function $f$ almost everywhere on $[0,1)$.
    Then $(\tau_n)_{n\in\bN}$ converges in probability, in the sense of permutons, to the (deterministic) law $\mu_f$ of $\left(\big. U, F(E/f(U)) \right)$, where $U$ and $E$ are independent random variables with laws $\Unif{[0,1]}$ and $\Exp{1}$, and
    \begin{equation}\label{eq: function F}
        F(x) := 1 - \int_0^1 e^{-xf(t)} \mathrm d t \qquad \text{for $x\in[0,\infty)$.}
    \end{equation}
    This permuton $\mu_f$ is characterized by the pattern densities
    \begin{equation}\label{eq: Luce pattern densities}
        \prob{ \sigma_{k,\mu_f} = \sigma }
        = k! \int_{0<u_1<\ldots<u_k<1}
        \prob{
        \tau^{(u_1, \ldots, u_k)}_k = \sigma } \mathrm du_1 \ldots \mathrm du_k\qquad\text{for $k\in\bN$ and $\sigma\in\fS_k$,}
    \end{equation}
     where we let $\tau_k^{(u_1, \ldots, u_k)}$ denote a Luce random permutation with weights $f(u_1), \ldots, f(u_k)$.
    Moreover, the permuton $\mu_f$ has a density $\rho_f$ with respect to $\Leb_{[0,1]^2}$ given by
    \begin{equation}\label{eq: density of the Luce permuton}
        \rho_f(x,y) = \frac{f(x)e^{-f(x)F^{-1}(y)}}{\int_0^1 f(t)e^{-f(t)F^{-1}(y)} \mathrm dt}\qquad \text{for $(x,y)\in(0,1)^2$.}
    \end{equation}
\end{theorem}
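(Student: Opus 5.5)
We plan to work entirely through the exponential representation of \Cref{lem: exponential representation}: take $E_1,\dots,E_n$ i.i.d.\ $\Exp{1}$ and set $E^{(n)}_i := E_i/\theta^{(n)}_i = E_i/f_n((i-1)/n)$, so that $\tau_n = \Perm{E^{(n)}_1,\dots,E^{(n)}_n}$ has law $\Luce{\bth^{(n)}}$. The permuton $\mu_{\tau_n}$ is then essentially the empirical measure of the points $\big(i/n, R_n(E^{(n)}_i)\big)$. Here $R_n(x) := \frac1n\#\{j : E^{(n)}_j\le x\}$ is the normalized rank function. The proof has three parts: uniform convergence of $R_n$ to $F$, weak convergence of the resulting empirical measure to $\mu_f$, and a direct computation of the density and pattern densities of $\mu_f$.

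\textbf{Step 1 (ranks).} We compute $\expec{R_n(x)} = \frac1n\sum_i (1-e^{-x f_n((i-1)/n)}) = 1-\int_0^1 e^{-x f_n(t)}\,\mathrm dt$. Since the integrand is bounded by $1$ and $f_n\to f$ a.e., dominated convergence shows this tends to $F(x)$. The variance of $R_n(x)$ is at most $1/(4n)$, so $R_n(x)\to F(x)$ in probability for each $x$. The map $F$ is continuous and nondecreasing, with $F(0)=0$ and $F(\infty)=1$ because $f>0$ a.e.\ and is finite. Each $R_n$ is monotone as well, so a standard Glivenko--Cantelli/Dini argument upgrades this to $\sup_{x\ge0}|R_n(x)-F(x)|\to0$ in probability.

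\textbf{Step 2 (permuton convergence).} Permuton convergence is metrized by testing against countably many bounded Lipschitz functions $g$ on $[0,1]^2$. For such $g$, Step 1 gives, up to $o_{\bP}(1)$ (the cell-averaging in $\mu_{\tau_n}$ costs $O(1/n)$),
\[
\int g\,\mathrm d\mu_{\tau_n} = \frac1n\sum_i g\big(i/n, F(E_i/f_n((i-1)/n))\big).
\]
The sum of independent bounded terms concentrates, with variance $O(1/n)$, around its mean. That mean equals $\int_0^1 \expec{g(\lceil tn\rceil/n, F(E/f_n(t)))}\,\mathrm dt$. By continuity of $F$ and $g$ and dominated convergence (using $f_n\to f$ a.e.), this converges to $\expec{g(U,F(E/f(U)))}$. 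The limit is deterministic, which gives convergence in probability.

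We must also check that $\mu_f$ is a permuton. The first marginal is uniform trivially. For the second, $\prob{F(E/f(U))\le F(x)} = \prob{E\le xf(U)} = F(x)$, and $F$ is a continuous bijection onto $[0,1)$ on the relevant range. The main technical obstacle is the case where $F$ fails to be strictly increasing or $f$ is unbounded; for the latter, continuity of $F$ on $(0,\infty)$ and the uniform-rank bound still suffice.

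\textbf{Step 3 (density and patterns).} Conditionally on $U=x$, the variable $Y=F(E/f(x))$ satisfies $\prob{Y\le y}=1-e^{-f(x)F^{-1}(y)}$. Differentiating in $y$ with $F'(s)=\int f e^{-sf}$ gives \eqref{eq: density of the Luce permuton}. For \eqref{eq: Luce pattern densities}, sample $k$ i.i.d.\ points $(U_j, F(E_j/f(U_j)))$. Since $F$ is increasing, the induced permutation equals $\Perm{E_{(j)}/f(U_{(j)})}$. Conditioning on the order statistics $u_1<\dots<u_k$, whose density is $k!$, \Cref{lem: exponential representation} identifies this as $\tau_k^{(u_1,\dots,u_k)}$. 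Characterization of $\mu_f$ by these densities then follows from the cited result of \cite{Hoppen_Kohayakawa_Moreira_Rath_MenezesSampaio_2013}.
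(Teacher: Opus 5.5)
The paper does not prove this theorem: it is quoted from \cite{Borga_Chatterjee_Diaconis_2025}, so there is no internal proof to compare with. Your argument is a correct, self-contained proof.

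Step 1 is sound: you get $L^2$ convergence of $R_n(x)$ to $F(x)$ for each $x$, then a finite-grid argument using that $R_n$ is non-decreasing with $R_n(0)=0$ and that $F$ is continuous with $F(0)=0$ and $F(\infty)=1$. Step 2 is sound: you use Lipschitz test functions, replace $R_n$ by $F$ at cost $L\sup|R_n-F|$, then apply a variance bound and dominated convergence in $t$. Step 3 is also sound.

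A few small clean-ups:
\begin{itemize}
\item Since $f>0$, the map $x\mapsto\int_0^1 e^{-xf(t)}\,\mathrm dt$ is strictly decreasing. So $F$ is automatically a continuous strictly increasing bijection $[0,\infty)\to[0,1)$, and the ``obstacle'' you flag does not arise.
\item Unboundedness of $f$ is also harmless. $F$ is differentiable on $(0,\infty)$ with $F'(s)=\int_0^1 fe^{-sf}\in(0,\infty)$, because $fe^{-sf}\le 1/(es)$. This is all the density computation needs, since $F^{-1}(y)>0$ for $y\in(0,1)$.
\item In Step 3, $E_{(j)}$ must denote the exponential attached to the point with the $j$-th smallest $U$, i.e.\ a concomitant, not an order statistic of the $E$'s. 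Conditionally on the $U$'s these are still i.i.d.\ $\Exp{1}$, which is what makes the identification with $\tau_k^{(u_1,\ldots,u_k)}$ work.
\item That $\mu_f$ is a permuton comes for free, as a weak limit of permutons.
\end{itemize}

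On the choice of route: the way the paper frames the result suggests a pattern-density proof instead. It treats the pattern densities as the characterization, and presents Lemma~\ref{lem: induced Luce} and Lemma~\ref{lem: permuton sample is Luce with random weights} as reformulations. That proof would run as follows.
\begin{itemize}
\item By Lemma~\ref{lem: induced Luce}, the expected density of a pattern $\sigma\in\fS_k$ in $\tau_n$ is an average over $i_1<\cdots<i_k$ of $\prob{\Luce{\theta_{i_1},\ldots,\theta_{i_k}}=\sigma}$.
\item This converges to the right-hand side of \eqref{eq: Luce pattern densities} by dominated convergence, since Luce probabilities are bounded and continuous in the weights.
\item Convergence of expected pattern densities to those of a deterministic permuton gives convergence in probability.
\item One then identifies the limit with the law of $(U,F(E/f(U)))$ via your Step 3 computation.
\end{itemize}
That route leans on general permuton theory and needs no uniform control of ranks. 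Yours identifies the limit directly and is quantitative. It shows the rescaled diagram of $\tau_n$ is uniformly close to the explicit point cloud $\big(i/n, F(E_i/\theta_i^{(n)})\big)$. This is the same ``discrete model $\approx$ continuous model'' viewpoint the paper later exploits in Section~\ref{sec: coupling}.
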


\begin{example}
    Take $f: t\mapsto 1-t$, so that $\mu_f$ is the limit permuton of Luce permutations with Sukhatme weights $\theta_i^{(n)} := n-i+1$.
    Then, letting $\varphi:[0,1)\to[0,\infty)$ denote the inverse of $F: x\in[0,\infty) \mapsto \frac{x-1+e^{-x}}{x}$, we get
    \begin{equation}\label{eq: Sukhatme density}
        \rho_f(x,y) = \frac{(1-x)\varphi(y)^2 e^{x\varphi(y)}}{e^{\varphi(y)}-\varphi(y)-1}\qquad\text{for all $(x,y)\in[0,1)^2$.}
    \end{equation}
    See \Cref{fig: illustration permuton convergence} for an illustration.
    As a comparison, see 
    %\Cref{fig: expoly Sukhatme} for the ``expoly-Sukhatme'' function $f(t) = \exp\big[ -\big(\frac{n-k+1}{n}\big)^{-\alpha} \big]$ with $\alpha=1$, and 
    \Cref{fig: Mallows vs LuceMallows} (right) for the ``Mallows-like'' function $f(t) = e^{-\beta t}$ with $\beta=6$.
\end{example}

\begin{figure}
    \centering
    \includegraphics[width=.38\linewidth]{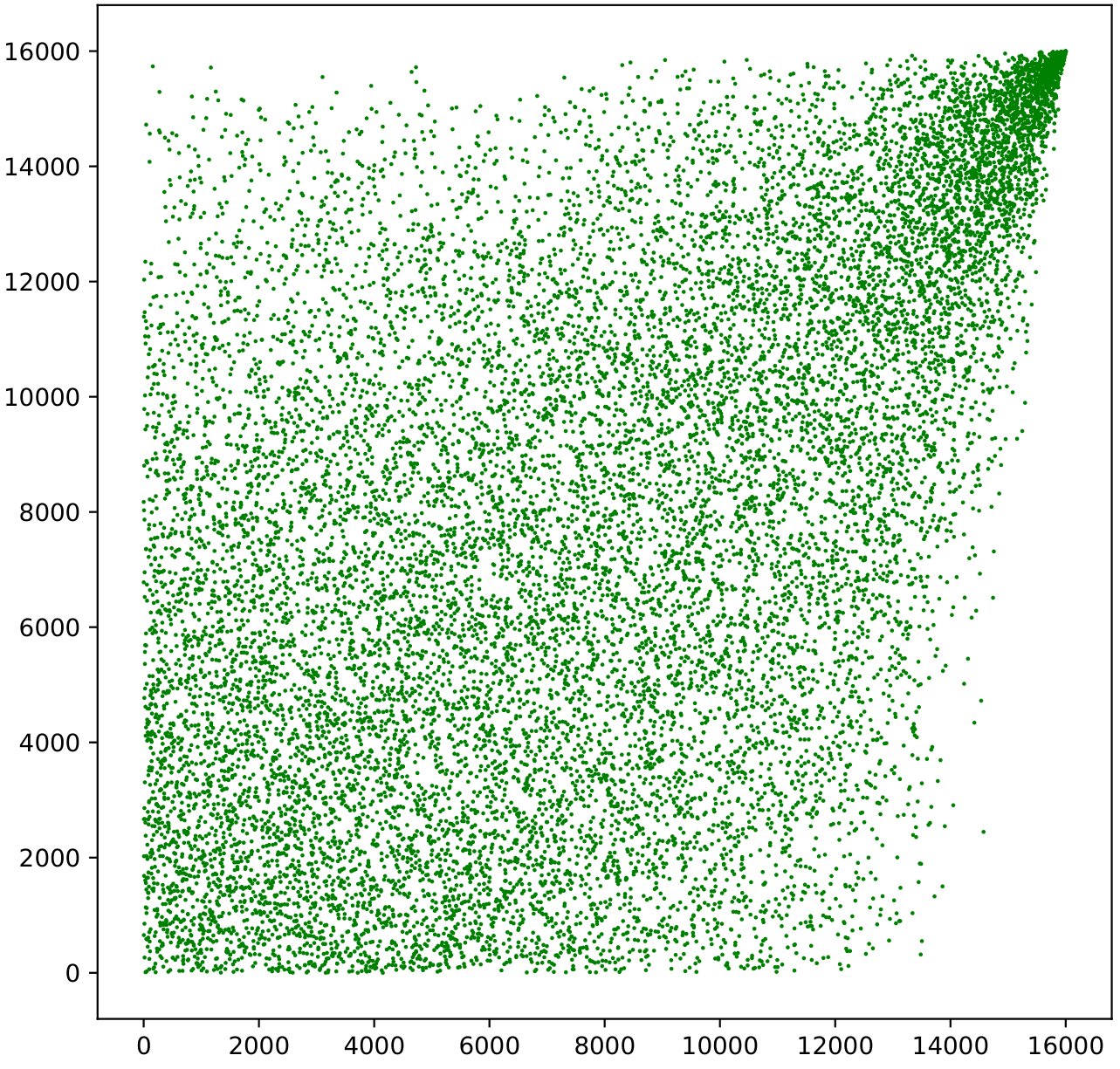}
    \qquad\qquad
    \includegraphics[width=.4\linewidth]{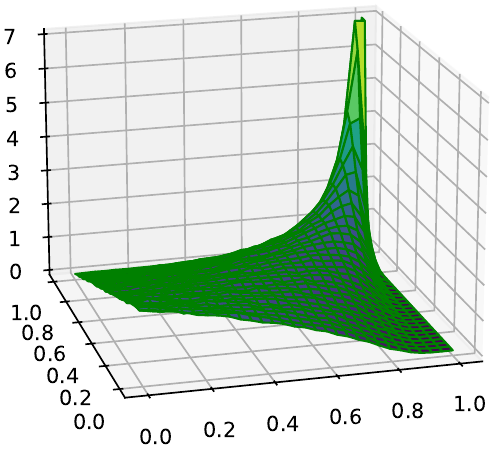}
    \caption{Left: the diagram of a Luce permutation $\tau_n$ with Sukhatme weights $\theta_i^{(n)} = n-i+1$ and size $n=16000$.
    Right: 3D plot of the (truncated) density $\rho_f$ of the ``Sukhatme permuton'' $\mu_f$ with $f(t)=1-t$; see \eqref{eq: Sukhatme density}.}
    \label{fig: illustration permuton convergence}
\end{figure}

% \begin{figure}
%     \centering
%     \includegraphics[width=.38\linewidth]{permpoints_LuceExpolySukhatme_size16000.pdf}
%     \qquad\qquad
%     %\includegraphics[width=.4\linewidth]{heatmap_LuceExpolySukhatme_size_18000.pdf}
%     \caption{Left: a Luce permutation $\tau_n$ with ``expoly-Sukhatme'' weights $\theta_i^{(n)} = \exp\big[ \frac{-n}{n-i+1} \big]$ and size $n=16000$.
%     Right: the (truncated) density $\rho_f$ of the ``expoly-Sukhatme permuton'' $\mu_f$ with $f(t)=\exp\big[ \frac{-1}{1-t} \big]$.}
%     \label{fig: expoly Sukhatme}
% \end{figure}

It should be noted that permuton convergence, in itself, is rather ``weak''.
Indeed, the convergence of a sequence $(\sigma_n)_{n\in\bN}$ towards a permuton $\mu$ implies the convergence of $\varphi(\sigma_n)$ only for a few ``global'' statistics $\varphi : \fS \to \bR$, such as normalized classical pattern counts.
Nonetheless, an emerging line of thought is that permuton convergence, \emph{together with} some ``regularity'' properties, should imply that $\varphi(\sigma_n)$ behaves asymptotically like $\varphi(\sigma_{n,\mu})$, for a wider class of statistics $\varphi$.
One striking example of this is the \emph{equicontinuity} property proposed by Mukherjee in \cite{Mukherjee_2016}, which implies the convergence of small cycle counts.
A similar philosophy can be perceived in the papers \cite{Mueller_Starr_2013}, which relates the LIS of Mallows permutations to their limit permuton, and \cite{Dubach_2025}, which analyzes conjugacy-invariant permutations through a geometric construction akin to that of permuton samples.
The present work can be seen in the continuity of this line of research, as we rely on the ``regularity properties'' of Lemmas~\ref{lem: permuton sample is Luce with random weights} and~\ref{lem: discrete Luce is permuton sample with fixed x-coordinates} to relate the LIS of Luce permutations to their limit permuton (\Cref{th: LLN with variational constant}).
Hence, this also contributes to the following problem raised in \cite{Borga_Chatterjee_Diaconis_2025}:
    ``a natural question is to study how a sequence of exactly distributed Luce permutations behaves differently than the permutations sampled for their limiting permuton''.

\subsection{Statements of the main results and organization of the paper}

In this section we present our main results, with their precise statements for the reader's convenience.

Our first main result concerns the growth rate of the LIS of Luce permutations.
We restrict our attention to sequences of non-increasing weights, which form an important class of weights for the Luce model (see, e.g., the discussions in \cite{chatterjee2024enumerative,Borga_Chatterjee_Diaconis_2025}).

In what follows, if $(a_n)_{n\in\bN}$ and $(b_n)_{n\in\bN}$ are sequences of positive numbers, we use the notation $a_n \asymp b_n$ to signify that the ratios $a_n/b_n$ and $b_n/a_n$ are bounded by an absolute constant.
If $(A_n)_{n\in\bN}$ and $(B_n)_{n\in\bN}$ are sequences of a.s.\ positive random variables, then $A_n \asymp_\bP B_n$ means that $A_n / B_n$ is bounded below and above in probability, in the following sense:
\[
\lim_{\varepsilon\to0} \liminf_{n\to\infty} \prob{ A_n / B_n \in [\varepsilon, 1/\varepsilon] } = 1 \,.
\]

\begin{theorem}[Summary of Theorem~\ref{th: LIS growth rate - general} and Corollary~\ref{cor: LIS is close to expectation if non-increasing weights}]
\label{th: intro growth rate}
    For each $n\in\bN$, let $\bth^{(n)} = (\theta_1^{(n)} \ge \ldots \ge \theta_n^{(n)})$ be a non-increasing sequence of positive weights. %, rescaled so that $\theta_1^{(n)} = 1$.
    For each $k\in\bZ$, %\bN_0$, 
    set
    \[
    B_k^{(n)} := \left\{ i\in[n] : 
    k \le -\log \theta_i^{(n)} < k+1
    %\lfloor -\log \theta_i^{(n)} \rfloor = k
    \right\}
    \quad\text{and}\quad
    m_k^{(n)} := \#{B_k^{(n)}} \,.
    \]
    Then, for $\tau_n$ a $\Luce{\bth^{(n)}}$ random permutation:
    \[
    \expec{\LIS{\tau_n}} \asymp
    \sum_{k\in\bZ} \sqrt{m_k^{(n)}} 
    \quad\text{and}\quad
    \LIS{\tau_n} \asymp_\bP
    \sum_{k\in\bZ} \sqrt{m_k^{(n)}} 
    \,.
    \]
\end{theorem}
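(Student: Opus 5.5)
We work throughout with the exponential representation of \Cref{lem: exponential representation}: $\tau_n=\Perm{E_1,\dots,E_n}$ with $E_i\sim\Exp{\theta_i}$ independent. Write $E_i=X_i/\theta_i$ with $X_i\sim\Exp{1}$ i.i.d. The points are $(i,E_i)$. Within a block $B_k$ the weights lie in $(e^{-k-1},e^{-k}]$, so they agree up to a factor $e$. Hence the points of one block look like a uniform-type permutation, with $E_i$ roughly of order $e^{k}$. Since the weights are non-increasing, the blocks are consecutive intervals of indices. Block $k$ lies to the right of block $k'$ when $k>k'$, and its typical $E$-values are larger. This makes the blocks sit "diagonally", so increasing subsequences can be concatenated across blocks, which suggests the sum $\sum_k\sqrt{m_k}$.

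\textbf{Lower bound.} For each $k$, restrict to indices $i\in B_k$ with $X_i\in[1,2]$, say. For these, $E_i\in[e^{k},2e^{k+1}]$. These windows overlap only for neighbouring $k$. To fix this, split $\bZ$ into residue classes modulo $3$ and use only blocks $k\equiv r$. Then the windows $[e^k,2e^{k+1}]$ are disjoint and ordered, and the index sets are ordered the same way. An increasing subsequence chosen inside each selected block therefore concatenates into a global one. Inside $B_k$, condition on the set of selected indices, whose size is $\Binomial{m_k}{c}$ for a fixed $c>0$. The conditional values $E_i=X_i/\theta_i$ have densities on the window that are comparable, up to absolute constants, to a fixed density. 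So the induced permutation is comparable to a uniform one, for instance by a rejection or thinning coupling. Its expected LIS is then $\gtrsim\sqrt{m_k}$ by \Cref{th: LIS uniform 2sqrt n}, uniformly with a deterministic constant even for small $m_k$. Choosing the best residue class $r$ gives $\expec{\LIS{\tau_n}}\gtrsim\sum_k\sqrt{m_k}$.

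\textbf{Upper bound.} Use $\LIS{\sigma}\le\sum_j\LIS{\sigma|_{S_j}}$ for any partition into sets $S_j$. We cannot take $S_j=B_k$ alone, because $\sum_k\expec{\LIS{\tau|_{B_k}}}$ is fine but the partition must cover everything. In fact it does: every index belongs to some $B_k$, so $\LIS{\tau_n}\le\sum_k\LIS{\tau_n|_{B_k}}$ directly. It then remains to show $\expec{\LIS{\tau_n|_{B_k}}}\lesssim\sqrt{m_k}$ for a Luce permutation whose weights are comparable within a factor $e$. For this we bound the expected number of increasing subsequences of length $\ell$. The probability that given indices $i_1<\dots<i_\ell$ satisfy $E_{i_1}<\dots<E_{i_\ell}$ is at most $C^\ell/\ell!$. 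This follows by comparing the ratios in the Luce formula, or by noting that the joint density of the $E$'s is within a factor $e^{2\ell}$ of an i.i.d.\ density after rescaling. Hence the count is at most $\binom{m_k}{\ell}C^\ell/\ell!$, which is small once $\ell\ge C'\sqrt{m_k}$. This gives the expectation bound together with exponential tails: $\prob{\LIS{\tau|_{B_k}}\ge t\sqrt{m_k}}\le e^{-ct\sqrt{m_k}}$ for large $t$.

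\textbf{Main obstacle: the statement in probability.} The upper half of $\asymp_\bP$ follows from Markov's inequality. The lower half needs $\LIS{\tau_n}\gtrsim\sum_k\sqrt{m_k}$ with high probability, which is a concentration statement. The lower-bound construction is a sum of independent contributions over blocks of one residue class, because distinct blocks use disjoint $X_i$'s. Each contribution is bounded by $C\sqrt{m_k}$ with good tails and has mean $\ge c\sqrt{m_k}$. If no single block dominates, a second-moment (Paley--Zygmund) argument on the sum gives the bound. If one block dominates, we instead use concentration of the LIS of a near-uniform permutation within that block; when $m_k$ is large this follows from a coupling with the uniform case and \Cref{th: LIS uniform 2sqrt n}. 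Combining the two cases is the delicate point. I would handle it with Paley--Zygmund applied to $\sum_k Y_k$ with $\expec{Y_k^2}\lesssim m_k$, noting that $\expec{(\sum Y_k)^2}\lesssim(\sum\sqrt{m_k})^2$ holds by Cauchy--Schwarz. This gives a positive-probability lower bound. Upgrading to probability close to $1$ needs a further concentration step, which is the part I expect to be hardest.
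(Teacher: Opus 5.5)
\textbf{Upper bound and the gap in the in-probability lower bound.} Your upper bound (split into the blocks $B_k$, then count with $\binom{m_k}{\ell}C^\ell/\ell!$) is the paper's argument, and Markov does give the upper half of $\asymp_\bP$. The genuine gap is the lower half of $\LIS{\tau_n}\asymp_\bP\sum_k\sqrt{m_k}$. Paley--Zygmund only yields $\prob{\LIS{\tau_n}\ge c\sum_k\sqrt{m_k}}\ge c_0>0$. The statement needs this probability to tend to $1$ as $c\to0$, and you leave that step open.

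The missing idea has two parts. First, the fluctuations of $\LIS{\tau_n}$ live on scale $\sqrt n$, which is never larger than the order of the mean. Second, you need a separate w.h.p.\ bound $\LIS{\tau_n}\gtrsim\sqrt n$ for the regime where the mean is itself of order $\sqrt n$. The paper applies McDiarmid (\Cref{th: McDiarmid general}) to $\LIS{E_1,\dots,E_n}$, which has bounded differences $1$, giving $\prob{\abs{\LIS{\tau_n}-\expec{\LIS{\tau_n}}}>\lambda}\le 2e^{-2\lambda^2/n}$. This settles the case $\expec{\LIS{\tau_n}}>\lambda\sqrt n$. If instead $\expec{\LIS{\tau_n}}\le\lambda\sqrt n$, it suffices that $\LIS{\tau_n}\ge\sqrt n$ w.h.p. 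That follows from Proposition~\ref{prop: superuniformity property} with $c=0$, $d=\infty$ (so $\LIS{\tau_n}$ stochastically dominates the uniform LIS) together with \Cref{th: LIS uniform 2sqrt n}; see Corollaries~\ref{cor: Luce LIS is at least 2 sqrt n} and~\ref{cor: LIS is close to expectation if non-increasing weights}.

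Your own framework can also be closed, without McDiarmid:
\begin{enumerate}[label=(\alph*)]
\item Your block contributions $Y_k$ are independent with $\expec{Y_k^2}\lesssim m_k$, so $\mathrm{Var}(\sum_k Y_k)\lesssim n$. Chebyshev (not Paley--Zygmund) then handles the case $\sum_k\sqrt{m_k}\ge\lambda\sqrt n$.
\item Otherwise, $n=\sum_k m_k\le\max_k\sqrt{m_k}\,\sum_k\sqrt{m_k}$ forces a single block with $m_k\ge n/\lambda^2$. Inside that block, the coupling with a uniform permutation and \Cref{th: LIS uniform 2sqrt n} give $\LIS{\tau_n}\gtrsim\sqrt n/\lambda$ w.h.p.
\end{enumerate}

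\textbf{A faulty step in the lower bound on the expectation.} Conditioning on $X_i\in[1,2]$ places $E_i$ in $[1/\theta_i,2/\theta_i]$. Within $B_k$, $\theta_i$ ranges over $(e^{-k-1},e^{-k}]$, and $2<e$, so these supports can be disjoint: compare $\theta_i=e^{-k}$ with $\theta_j$ close to $e^{-k-1}$. The conditional laws therefore need not admit a common lower density, and the thinning coupling you invoke has nothing to extract. An upper comparison alone only bounds the LIS from above.

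The fix is to condition on $E_i$ itself lying in a fixed window $J_k=[e^k,e^{k+1})$, as the paper does. These windows are already disjoint and ordered, so no residue classes are needed. On $J_k$ one has $\theta_i x\in[e^{-1},e]$, so every conditional density is at least $e^{-e}/\abs{J_k}$. Thinning then produces a uniform permutation of size $\Binomial{r_k}{e^{-e}}$.

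The paper avoids thinning altogether via Proposition~\ref{prop: superuniformity property}. The conditional CDFs $G_i(t)$ are non-increasing in $i$ (Lemma~\ref{lem: increasing ratio of exponentials}), so the quantile coupling $Z_i=G_i^{-1}(U_i)$ preserves every increase of $(U_i)$. This gives stochastic domination of the uniform LIS with no constant lost. The same proposition, with $c=0$ and $d=\infty$, supplies the w.h.p.\ $\sqrt n$ lower bound needed in the first part.
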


\Cref{sec: general growth rate} is dedicated to this result, along with applications and examples.
In \Cref{sec: examples growth rate}, we provide several methods to compute the quantity $\sum_{k\in\bZ} \sqrt{m_k^{(n)}}$ when the weights are of the form $\theta_i^{(n)} = f\big( \frac{i-1}{n} \big)$ for a given function $f$.
In particular, we provide sufficient conditions for the growth rate to be of order $\sqrt n$ or $n^\beta$ for any given $\beta\in[1/2,1)$ (see Example~\ref{ex: growth rates n beta}).
For example, we find that:
\begin{itemize}
    \item For the polynomial Sukhatme weights $\theta_k = {\left( \frac{n-k+1}{n} \right)}^\alpha$ with $\alpha>0$, the LIS grows like $\sqrt n$;
    \item For the expoly-Sukhatme weights $\theta_k = \exp{\left[ - {\left( \frac{n-k+1}{n} \right)}^{-\alpha} \right]}$ with $\alpha>1$, the LIS grows like $n^\beta$ with $\beta = \frac{\alpha}{\alpha+1}$;
    \item For the exponential Sukhatme weights $\theta_k = \alpha^k$ with $\alpha\in(0,1)$, the LIS grows like $n$;
    \item For the Mallows-like weights $\theta_k = q_n^k$ {with $0<q_n\le1$ and} $q_n\to1$, the LIS grows like $\max\big( n\sqrt{1-q_n} \,,\, \sqrt n \big)$.
\end{itemize}

The proof of \Cref{th: LIS growth rate - general} is conducted in \Cref{sec: proof growth rate}.
It partly relies on a ``superuniformity'' property (Proposition~\ref{prop: superuniformity property}) proven in \Cref{sec: superuniformity}.
The latter essentially states that the LIS of any given subpermutation in $\tau_n$, defined by restricting the positions and values to given intervals, stochastically dominates the LIS of a uniformly random permutation.
Finally, we provide in \Cref{sec: concentration} a concentration inequality for the random variable $\LIS{\tau_n}$, implying that the estimate for the expectation in \Cref{th: LIS growth rate - general} also holds in probability; see Corollaries~\ref{cor: McDiarmid for Luce} and~\ref{cor: LIS is close to expectation if non-increasing weights}.

\medskip

Our second main result states a law of large numbers in the $\sqrt n$ regime for ``regular'' weights.
That is, we consider weights of the form $\bth^{(n)}(f) := \big(f\big(\frac{i-1}{n}\big)\big)_{1\le i\le n}$ for some ``sufficiently regular'' function $f : [0,1) \to \bR_+^*$.
We call such weights \emph{canonically associated} with the function $f$.
As a preliminary result, we show via Proposition~\ref{prop: growth rate with phi} and Lemma~\ref{lem: discrete and continuous models share same groth rate} that if $\tau_n$ and $\sigma_{n,\mu_f}$ have laws $\Luce{\bth^{(n)}(f)}$ and $\sample_n(\mu_f)$ respectively, then \Cref{th: intro growth rate} applies with $\sigma_{n,\mu_f}$ instead of $\tau_n$.
In other words, $\LIS{\tau_n}$ and $\LIS{\sigma_{n,\mu_f}}$ share the same growth rate in expectation and in probability.
Further, we prove that in the case $\expec{\LIS{\tau_n}} \asymp \sqrt n$, then both of these random variables are asymptotic to $2 K(f) \sqrt n$ for some constant $K(f)$.
We provide two formulas for this constant, relying on a variational problem; the first one is in terms of the permuton $\mu_f$ (\Cref{th: LLN with variational constant}), while the second one is a more direct reformulation in terms of $f$ (Lemma~\ref{lem: variational reformulate}).
Note that the condition $\expec{\LIS{\tau_n}} \asymp \sqrt n$ can be checked through \Cref{th: LIS growth rate - general}, and specifically, Lemma~\ref{lem: summable condition for sqrt n regime}.

\begin{theorem}[Summary of Theorem~\ref{th: LLN with variational constant} and Lemma~\ref{lem: variational reformulate}]
\label{th: intro LLN}
    Let $f$ be a positive, finite, non-increasing, $\cC^1$ function on $[0,1)$.
    Let $\mu_f$ be the associated permuton of \Cref{th: Luce limit permuton} with density $\rho_f$ given by \eqref{eq: density of the Luce permuton}.
    For each $n\in\bN$, let $\bth^{(n)}(f) := \big( f\big(\frac{i-1}{n}\big) \big)_{1\le i\le n}$ be the canonically associated weights.
    Let $\tau_n$ be a $\Luce{\bth^{(n)}(f)}$ random permutation, and $\sigma_{n,\mu_f}$ be a $\mu_f$-sample.
    Assume that
    {$-\log f$ is convex}
    and that $\expec{\LIS{\tau_n}} \asymp \sqrt n$.
    Then:
    \[
        \lim_{n\to\infty} \frac{1}{\sqrt n} \LIS{\tau_n}
        = \lim_{n\to\infty} \frac{1}{\sqrt n} \LIS{\sigma_{n,\mu_f}} 
        = 2 K(f)
        \quad\text{with}\quad
        K(f) = \sup_{u:[0,1)\nearrow [0,\infty)} \int_0^1 \sqrt{f(t) u'(t) e^{-f(t)u(t)}} \mathrm dt
    \]
    where the limits hold in probability and in expectation, and the supremum ranges over all non-decreasing, right-continuous curves $u : [0,1) \to [0,\infty)$.
\end{theorem}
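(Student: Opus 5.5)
The plan is to prove the statement in two halves: first the permuton sample $\sigma_{n,\mu_f}$, then a transfer to the Luce permutation $\tau_n$ by coupling.

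\textbf{Permuton side.} The target statement is $\LIS{\sigma_{n,\mu_f}}/\sqrt n \to 2K(f)$, with $K(f)$ given by a Deuschel--Zeitouni type variational problem. For a permuton with density $\rho$ on $[0,1]^2$, the results of Deuschel--Zeitouni and Dubach (cited in the introduction) give
\[
\frac{1}{\sqrt n}\LIS{\sigma_{n,\mu}} \to 2\sup_{\gamma} \int \sqrt{\rho(\gamma(s))\,\gamma_1'(s)\gamma_2'(s)}\,\mathrm ds,
\]
where the supremum runs over non-decreasing curves from $(0,0)$ to $(1,1)$. This holds under mild regularity of $\rho$, for instance when $\rho$ is bounded below and above on compacts of the open square; in general one has to truncate.

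The density \eqref{eq: density of the Luce permuton} is continuous but can blow up near $y=1$, where $F^{-1}(y)\to\infty$. I would handle this by truncating to $[0,1]\times[0,1-\delta]$ and using the hypothesis $\expec{\LIS{\tau_n}}\asymp\sqrt n$, which by Lemma~\ref{lem: discrete and continuous models share same groth rate} also controls the sample. By Theorem~\ref{th: LIS growth rate - general}, the contribution of the strip near $y=1$ is then $o(\sqrt n)$ uniformly as $\delta\to0$. The point is that the summability condition of Lemma~\ref{lem: summable condition for sqrt n regime} makes the tail of $\sum_k\sqrt{m_k}$ small.

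Next I reformulate the functional. Parametrize the curve by its $x$-coordinate $t$ and write $y=F(u(t))$ with $u$ non-decreasing. Then $\rho_f(t,F(u))\cdot F'(u)u' = f(t)e^{-f(t)u}u'$, because $F'(u)=\int f e^{-fu}$ cancels the denominator of $\rho_f$. This gives exactly $\int_0^1\sqrt{f(t)u'(t)e^{-f(t)u(t)}}\,\mathrm dt$, which is Lemma~\ref{lem: variational reformulate}.

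Vertical jumps of the curve contribute nothing, so right-continuous $u$ suffice. Horizontal segments ($u'=0$) also contribute nothing, so they are allowed. I expect the convexity of $-\log f$ to be needed here, to guarantee that near-optimal curves can be taken with $u$ bounded on $[0,1-\delta]$, or that the supremum is attained in a regular class. This is what makes the truncation compatible with the variational problem.

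\textbf{Transfer to $\tau_n$.} I would use the two coupling lemmas. Lemma~\ref{lem: permuton sample is Luce with random weights} says that a $\mu_f$-sample is Luce with random weights $f(U_{(i)})$. Lemma~\ref{lem: discrete Luce is permuton sample with fixed x-coordinates} says that $\tau_n$ is the same construction with deterministic abscissae $(i-1)/n$.

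Via the exponential representation (Lemma~\ref{lem: exponential representation}), I couple the two by using the same exponential variables $E_i$, setting points $(x_i, E_i/f(x_i))$ and applying $F$ to the second coordinate. I then compare the point $x_i=(i-1)/n$ with $U_{(i)}$. Since $|U_{(i)}-(i-1)/n|=O(n^{-1/2})$ and $f$ is $\cC^1$, the perturbation on $[0,1-\delta]$ is small.

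To turn this into a statement about the LIS, I would use a blocking argument. Cut the square into mesoscopic boxes. In each box both point configurations are approximately Poisson with the same local intensity, so the LIS computed along box chains agree to first order. The superuniformity Proposition~\ref{prop: superuniformity property} supplies the lower bounds inside boxes. For the upper bound, I would use monotonicity in the weights: adjusting the $f$-values to their box maxima and minima sandwiches the local LIS.

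Finally, concentration (Corollary~\ref{cor: McDiarmid for Luce}) upgrades the convergence in expectation to convergence in probability. Uniform integrability, which follows from the growth bound, then gives convergence in expectation.

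\textbf{Main obstacle.} I expect the hardest step to be controlling the region where $f$ is small, or $u$ large, near the top of the square. There the density degenerates and neither the variational formula nor the coupling is uniform. My first attempt would be to bound that region's LIS contribution by $\sum_{k\ge k_0}\sqrt{m_k}\cdot O(1)$ using Theorem~\ref{th: LIS growth rate - general} applied to the sub-Luce permutation. I would then show this is $o(\sqrt n)$ as $k_0\to\infty$, with convexity of $-\log f$ ensuring that the $m_k/n$ decay regularly.
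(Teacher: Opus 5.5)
You have the right architecture, and it matches the paper's: truncate, apply Deuschel--Zeitouni to $\sigma_{n,\mu_f}$, control the discarded regions with the $\sqrt n$ hypothesis, transfer to $\tau_n$, and change variables $u=F^{-1}\circ\phi$. Your reformulation step is exactly Lemma~\ref{lem: variational reformulate}.

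The genuine gap is the transfer to $\tau_n$.

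\emph{The same-$E_i$ coupling.} Using the same $E_i$ in both models gives no set of indices on which the two permutations agree. Each value $E_i/f(x_i)$ typically moves by a relative amount of order $n^{-1/2}$. That is far more than the $1/n$ spacing of the values, and comparable to the gaps along an increasing subsequence of length $\Theta(\sqrt n)$. You give no control of the effect on the LIS.

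\emph{The blocking step.} Everything then rests on blocking. Superuniformity along a strictly increasing chain of boxes does give the lower bound. But the claim that box chains ``agree to first order'' requires the hard upper-bound half of Deuschel--Zeitouni, which you would have to redo for the non-i.i.d.\ discrete model. Weakly increasing chains of boxes overcount: for the uniform density, the $2M-1$ boxes of a diagonal staircase give $\sum_Q 2\sqrt{N_Q}\approx 4\sqrt n$, twice the true LIS.

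\emph{The monotonicity sandwich.} The sandwich ``by monotonicity in the weights'' does not exist. Replacing $f(x_i)$ by a box maximum or minimum moves each $E_i/f(x_i)$ monotonically but not in an order-preserving way, and LIS is not monotone under such moves.

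\emph{The missing idea} is Proposition~\ref{prop: coupling discrete and continuous}. For $i\le(1-\varepsilon)n$, couple $E_i\sim\Exp{f(x_i)}$ with $T_i\sim\Exp{f(U_{(i)})}$ by the near-maximal coupling of Lemma~\ref{lem: exponential coupling}. By DKW, $E_i\ne T_i$ then has probability only $O\big((\log n)^{3/2}/\sqrt n\big)$. The two permutations induce the same pattern off $\cI_\unmatched$. The unmatched $E_i$ have conditional densities bounded by $Rg_n$, so Lemma~\ref{lem: upper bound on density gives upper bound on LIS} bounds their LIS by $O\big(n^{1/4}(\log n)^{3/4}\big)=o(\sqrt n)$. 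No blocking is needed.

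Your truncation and concentration steps also need fixing.

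\emph{Which region the $m_k$-tail controls.} The tail $\sum_{k\ge k_0}\sqrt{m_k}$ controls the indices with small weights, i.e.\ the band $[\alpha,1]\times[0,1]$. It does not control the strip $y\ge 1-\delta$. For $x\le\alpha$ that strip contains points of every weight in $[f(\alpha),f(0)]$ and needs its own argument. The paper uses Corollary~\ref{cor: bound on LIS north-west band}: by memorylessness, exponentials conditioned to exceed $F^{-1}(\beta)$ are shifted exponentials with rate ratio at most $\lambda_\alpha$. Their LIS is therefore at most $2e\sqrt{\lambda_\alpha\mu_f(B_{\alpha,\beta})n}$ with $B_{\alpha,\beta}=[0,\alpha]\times[\beta,1]$, which vanishes as $\beta\to1$.

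\emph{Truncating in $x$.} You must truncate in $x$ as well. If $\inf f=0$, then $\rho_f(x,y)\to0$ as $x\to1$ for fixed $y$, so DZ does not apply on $[0,1]\times[0,1-\delta]$. Bounds on compacts of the open square cannot suffice: the expoly-Sukhatme permuton with $\alpha>1$ satisfies them, yet its LIS is of order $n^{\alpha/(\alpha+1)}$. The $x$-truncation is where the hypothesis $\expec{\LIS{\tau_n}}\asymp\sqrt n$ enters, via Proposition~\ref{prop: growth rate with phi} and Lemma~\ref{lem: summable condition for sqrt n regime}. Convexity of $-\log f$ is used only there, to make $(p_k)$ non-increasing, not in the variational problem.

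\emph{Concentration.} McDiarmid cannot upgrade convergence in expectation to convergence in probability. Corollary~\ref{cor: McDiarmid for Luce} at $\lambda=\epsilon\sqrt n$ gives only the bound $2e^{-2\epsilon^2}$, which does not vanish. In the paper, convergence in probability comes from DZ for $\sigma_{n,\mu_f}$ together with the coupling; otherwise you would need Talagrand's inequality.
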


\Cref{sec: the constant} is dedicated to this result, and spans ideas that we believe to be more meaningful.
We start by showing in \Cref{sec: two models} that the choice of weights $\theta_i^{(n)} = f\big( \frac{i-1}{n} \big)$ enables the ``regularity properties'' of Lemmas~\ref{lem: permuton sample is Luce with random weights} and~\ref{lem: discrete Luce is permuton sample with fixed x-coordinates}.
These two lemmas are formally equivalent, but dual in their interpretation:
Lemma~\ref{lem: permuton sample is Luce with random weights} states that Luce permuton samples $\sigma_{n,\mu_f}$ are distributed like Luce permutations with random weights,
whereas Lemma~\ref{lem: discrete Luce is permuton sample with fixed x-coordinates} states that Luce permutations $\tau_n$ are distributed like Luce permuton samples ``with fixed x-coordinates''.

From there, the general philosophy is to compare the Luce permutation $\tau_n$ with weights $\bth^{(n)}(f)$ (the ``discrete'' model) with the Luce permuton samples $\sigma_{n,\mu_f}$ (the ``continuous'' model).
First, we argue in Lemma~\ref{lem: discrete and continuous models share same groth rate} that both models share the same LIS growth rate.
In order to refine this comparison, 
we construct in \Cref{sec: coupling} a coupling of $\tau_n$ and $\sigma_{n,\mu_f}$, such that they coincide on a ``large'' subpermutation; see Proposition~\ref{prop: coupling discrete and continuous}.
This later allows us to show that $\LIS{\tau_n} / \LIS{\sigma_{n,\mu_f}}$ converges to $1$ in probability, so it suffices to prove \Cref{th: LLN with variational constant} for $\sigma_{n,\mu_f}$.
See the proof of \Cref{th: LLN with variational constant} for details.

Finally, \Cref{sec: proof the constant} contains the remaining ingredients to prove \Cref{th: LLN with variational constant}.
The main input is the famous law of large numbers obtained by Deuschel and Zeitouni in \cite{Deuschel_Zeitouni_1995} for the LIS of permuton samples;
after settling some technical issues, we apply their result to the continuous model $\sigma_{n,\mu_f}$, and conclude that it also holds for the discrete model $\tau_n$ thanks to our coupling.
Finally, in \Cref{sec: variational problem} we discuss the computation of the constant $K(f)$, provide a simpler formula (Lemma~\ref{lem: variational reformulate}), and present some bounds.

\subsection{Notation}
\label{sec: notation}

Throughout, we let $\bN := \{1, 2, \ldots\}$ and $\bN_0 := \{0, 1, 2, \ldots\}$.
% The letter $n$ is always used to designate a number in $\bN$, eventually meant to go to $\infty$.
For $n\in\bN_0$, we let $[n] := \{1, \ldots, n\}$ with convention $[0]=\emptyset$.
We extend this notation by letting $[x] := [ \lfloor x \rfloor ]$ for $x\ge0$.
If $(u_n)_{n\in\bN}$ and $(v_n)_{n\in\bN}$ are two sequences of positive numbers, we write $u_n = \cO(v_n)$ if $(u_n / v_n)_{n\in\bN}$ is bounded, and $u_n = o(v_n)$ if $u_n / v_n \to 0$ as $n\to\infty$.
If we have both $u_n = \cO(v_n)$ and $v_n = \cO(u_n)$, we write $u_n = \Theta(v_n)$.

If $I$ is an arbitrary index set and $(u_\alpha)_{\alpha\in I}$, $(v_\alpha)_{\alpha\in I}$ are two families of numbers, $u_\alpha \asymp v_\alpha$ means that there exists an absolute constant $C>0$ such that $C^{-1} \le u_\alpha / v_\alpha \le C$ holds for all $\alpha\in I$.
In this paper, when a function $f : [0,1) \to (0,\infty)$ is involved, the \enquote{absolute} constant $C$ is allowed to depend on $f$.

For $n\in\bN$, we let $\fS_n$ denote the symmetric group on $[n]$, and we let $\fS := \bigsqcup_{n\in\bN} \fS_n$ denote the set of all permutations.
If $\sigma\in\fS_n$ and $i\in[n]$, we freely write $\sigma(i)$ or $\sigma_i$, although $\sigma_n$ usually denotes a permutation of size $n$.
This conflict of notation should cause no confusion.
We may use the \emph{one-line notation} $\sigma = \sigma_1 \sigma_2 \ldots \sigma_n$.
% We shall usually stick to the following convention for random permutations:
% $\sigma_n$ denotes a uniformly random permutation in $\fS_n$; $\sigma_{n,\mu}$ a $\mu$-sample for a given permuton $\mu$; and $\tau_n$ a Luce random permutation with given weights $\bth^{(n)}$.
% If $f$ is a positive, finite function on $[0,1)$, we let $\bth^{(n)} := \big( f\big( \frac{i-1}{n} \big) \big)_{1\le i\le n}$ denote the \enquote{canonically} associated weights.

If $y_1, \ldots, y_n$ is a sequence of pairwise distinct numbers, we let $\LIS{y_1, \ldots, y_n}$ denote the length of their longest increasing subsequences.
In particular, if $\sigma\in\fS_n$, then $\LIS{\sigma} = \LIS{\sigma_1, \ldots, \sigma_n}$.
If $\cP$ is a finite point set on $\bR^2$ with pairwise distinct x-coordinates and pairwise distinct y-coordinates, we let $\Perm{\cP}$ denote the permutation that it induces.
That is, $\Perm{\cP} := \Perm{Y_{(1)}, \ldots, Y_{(n)}}$ where $\big( X_{(i)}, Y_{(i)} \big)_{i\ge 1}$ are the points of $\cP$ ordered by increasing x-coordinate.
Then, we let $\LIS{\cP} := \LIS{\Perm{\cP}} = \LIS{Y_{(1)}, \ldots, Y_{(n)}}$.

If $X,Y$ are two real random variables, we say that $X$ \emph{stochastically dominates} $Y$ if, for all $t\in\bR$,
$
\prob{X\ge t} \ge \prob{Y\ge t} .
$
Equivalently, $X$ stochastically dominates $Y$ if and only if there exists a coupling $(\tilde X, \tilde Y)$ for which $\tilde X \ge \tilde Y$ holds almost surely.
If $(X_n)_{n\in\bN}$ is a sequence of real random variables and $x\in \bR$, we say that
$
\limsup X_n \le x 
$
in probability if, for any given $\delta>0$,
$\prob{ X_n \ge x + \delta } \to 0$ as $n\to\infty$.
Likewise, we say that $\liminf X_n \ge x$ in probability if, for any given $\delta>0$, $\prob{ X_n \le x - \delta } \to 0$ as $n\to\infty$.

\subsection{Discussion and open questions}
\label{sec: discussion}

\paragraph{A comparison with the Mallows model.}
Let $\tau$ be $\Luce{\theta_1, \dots, \theta_n}$ distributed.
Then, the sequence of values $\tau^{-1}(1), \ldots, \tau^{-1}(n)$ can be constructed by drawing balls from an urn without replacement:
first, $\tau^{-1}(1)$ is chosen among $[n]$ where each integer $k$ has a probability proportional to $\theta_k$ of being picked.
If, say, $\tau^{-1}(1)=\ell$, then $\tau^{-1}(2)$ is chosen among $[n] \setminus \{\ell\}$ where each remaining integer $k$ has a probability proportional to $\theta_k$ of being picked, and so on.
Such a construction is called a \emph{$p$-biased} distribution (with $p_k := \theta_k$);
see, e.g., \cite{Pitman_Tang_2019}.

In comparison, the Mallows distribution on $\fS_n$ with parameter $q \ge 0$ can be defined as follows.
First, $\pi(1)$ is chosen among $[n]$ where each integer $k$ has a probability proportional to $q^k$ of being picked.
If, say, $\pi(1) = \ell$, then $\pi(2)$ is chosen among $[n]\setminus\{\ell\}$ where each integer $k$ has a probability proportional to $q^{\mathrm{rk}(k)}$ of being picked, where $\mathrm{rk}$ is the unique increasing bijection from $[n]\setminus\{\ell\}$ to $[n-1]$.
The Mallows distribution is the law of $\pi$ (or, equivalently, the law of $\pi^{-1}$).
Such a construction is called a \emph{$p$-shifted} distribution (with $p_k := q^k$);
see, again, \cite{Pitman_Tang_2019}.

Therefore, one might wish to compare the Mallows model with parameter $q_n > 0$ to the Luce model with weights $\theta_k^{(n)} := q_n^k$.
Such a comparison is e.g.\ proposed in \cite{Pinsky_2026}, in the context of the secretary problem.
For our purposes, write $q_n = e^{-u_n}$ and assume that $u_n > 0$.
Then, the LIS of Mallows permutations $\pi_n$ with parameter $q_n$ was analyzed in \cite{Mueller_Starr_2013,Bhatnagar_Peled_2015,Basu_Bhatnagar_2017}: 
if $n u_n \to 0$ then $\expec{\LIS{\pi_n}} \asymp \sqrt{n}$;
if $u_n \to 0$ and $\liminf n u_n > 0$ then $\expec{\LIS{\pi_n}} \asymp n\sqrt{u_n}$; 
and if $\liminf u_n > 0$ then $\expec{\LIS{\pi_n}} \asymp n$.
Using \Cref{th: LIS growth rate - general}, we compute in Example~\ref{ex: Mallows-like weights LIS growth rate} the LIS growth rate for the Luce model with weights $\theta_k^{(n)} := q_n^k = e^{-ku_n}$;
interestingly, we find the same growth rates as for the Mallows model!

Going further, $(\pi_n)_{n\in\bN}$ admits a permuton limit with a density if and only if $nu_n \to \beta$ for some $\beta\in[0,\infty)$, in which case the limit permuton $\mu_\beta^{\rm M}$ has a smooth bounded density \cite{Starr_2009}.
Likewise, \Cref{th: Luce limit permuton} applies for $(\tau_n)_{n\in\bN}$ under the same condition, in which case the limit permuton is $\mu_{f_\beta}$ with $f_\beta: t\mapsto e^{-\beta t}$.
See \Cref{fig: Mallows vs LuceMallows} for an illustration:
although the two densities are clearly different, they share some similarities.
On the one hand, it was shown in \cite{Mueller_Starr_2013} that $\LIS{\pi_n}$ is asymptotic to $2 {\cal L}_\beta \sqrt{n}$ where
\[
    \mathcal{L}_\beta = \beta^{-1/2} \mathrm{arsinh}\big( \sqrt{e^\beta-1} \big) \,.
\]
On the other hand, if $\theta_k^{(n)} := e^{-k\beta/n}$, we know that $\LIS{\tau_n}$ is asymptotic to $2 K(f_\beta) \sqrt{n}$ for the constant $K(f_\beta)$ given by \Cref{th: LLN with variational constant} and Lemma~\ref{lem: variational reformulate}.
We discuss the computation of $K(f_\beta)$ in \Cref{sec: variational problem}, and show that $K(f_\beta) \ne \mathcal{L}_\beta$ in general.

\begin{figure}
    \centering
    \includegraphics[width=0.4\linewidth]{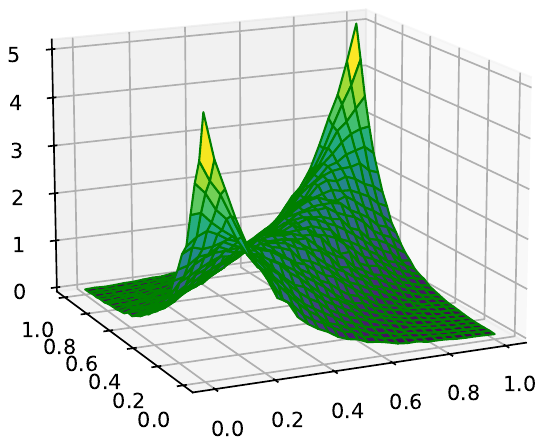}
    \qquad\qquad
    \includegraphics[width=0.4\linewidth]{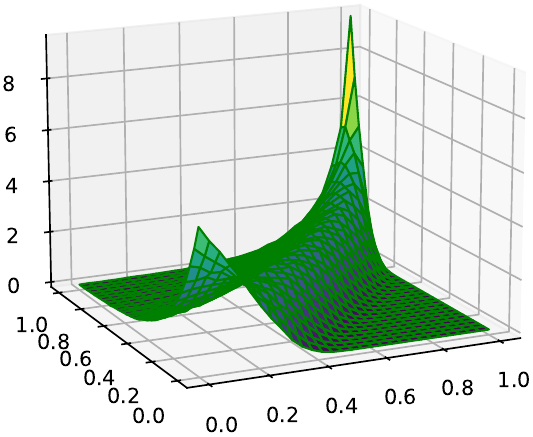}
    \caption{The 3D plots of two Mallows-related permutons with $\beta=6$:
    the Mallows permuton $\mu_\beta^{\rm M}$
    (left), and
    the ``Mallows-like'' Luce permuton $\mu_{f_\beta}$
    (right).
    }
    \label{fig: Mallows vs LuceMallows}
\end{figure}

\paragraph{Assumptions of the weights.}
Our main results concerning the LIS focus on non-increasing sequences of weights $(\theta_i)_{1\le i\le n}$, and we often assume that they are of the form $\theta_i = f((i-1)/n)$ for some function $f$.
The rigidity of the latter choice is discussed in \Cref{rem: functions that diverge at 0} and throughout \Cref{sec: two models}.

Non-monotone weights seem difficult to study, but one might wish to consider \emph{non-decreasing} sequences of weights.
Studying the LIS of Luce permutations with non-decreasing weights is then equivalent to studying the LDS (longest \emph{decreasing} subsequence) of Luce permutations with non-increasing weights.
The ``superuniformity'' property of Proposition~\ref{prop: superuniformity property} is easily adapted to show that the LDS of Luce permutations with such weights are stochastically dominated by that of uniform permutations, and thus, at most $2\sqrt n$.
For weights of the form $\bth^{(n)}(f)$, the proof of \Cref{th: LLN with variational constant} can be readily adapted to show that $\LDS{\tau_n}$ is asymptotic to $K^*(f) \sqrt n$ for the appropriate constant $K^*(f)$.
For other types of weights, we believe that the technique behind \Cref{th: LIS growth rate - general} can also be adapted to compute the growth rate of the LDS, but we have chosen not to pursue this any further.

\paragraph{Future prospects.}
A burning question is that of the fluctuations of $\LIS{\tau_n}$.
Unfortunately, these seem largely out of reach for the moment.
Let us note that currently, the fluctuations of the LIS are known for very few non-uniform distributions.

On another note, we believe that our coupling approach (cf \Cref{sec: two models,sec: coupling}) can be fruitful in the study of statistics other than the LIS.
For example, can we use a similar approach to study the number of fixed points of $\tau_n$?
This does not seem particularly clear or likely (see, e.g., the discussion in \cite[Section~5.2]{Borga_Chatterjee_Diaconis_2025}), but we intend to pursue this line of research.

\section{The growth rate of the LIS}
\label{sec: general growth rate}

\subsection{Statement and examples}
\label{sec: examples growth rate}

This \Cref{sec: general growth rate} is dedicated to the growth rate of the LIS of Luce random permutations.
We will focus on non-increasing weights and prove the following:

\begin{theorem}\label{th: LIS growth rate - general}
    Let $n\in\bN$ and $\bth^{(n)} = (\theta_1^{(n)} \ge \ldots \ge \theta_n^{(n)})$ be a non-increasing sequence of positive weights.
    For each $k\in\bZ$, set
    \[
    B_k^{(n)} := \left\{ i\in[n] : k\le -\log \theta_i^{(n)} < k+1 \right\}
    \quad\text{and}\quad
    m_k^{(n)} := \#{B_k^{(n)}} \,.
    \]
    Then, for $\tau_n$ a $\Luce{\bth^{(n)}}$ random permutation:
    \[
    \expec{\LIS{\tau_n}} \asymp
    \sum_{k\in\bZ} \sqrt{m_k^{(n)}} \,.
    \]
\end{theorem}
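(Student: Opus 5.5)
We prove the two bounds separately, working throughout with the exponential representation of \Cref{lem: exponential representation}. Take independent $E_i \sim \Exp{\theta_i}$, so that $\tau_n = \Perm{E_1,\ldots,E_n}$ and $\LIS{\tau_n}$ is the longest chain of indices $i_1<\cdots<i_\ell$ with $E_{i_1}<\cdots<E_{i_\ell}$. Write $E_i = \theta_i^{-1}X_i$ with $X_i$ i.i.d.\ $\Exp{1}$. For $i\in B_k$ we have $\theta_i^{-1}\in[e^k,e^{k+1})$, so $E_i\in X_i[e^k,e^{k+1})$. Because the weights are non-increasing, each block $B_k$ is an interval of consecutive indices, and these intervals appear in increasing order of $k$ as $i$ increases.

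\emph{Lower bound.} We build an increasing subsequence block by block. Inside $B_k$, the restriction of $\tau_n$ to positions in $B_k$ and to values in a suitable window should contain an increasing subsequence of length $\gtrsim \sqrt{m_k}$, and these pieces should chain together across blocks. Concretely, restrict block $B_k$ to those $i$ with $X_i\in[a,b]$ for fixed constants $0<a<b$, chosen so that the resulting $E$-windows $[ae^k, be^{k+1})$ are disjoint and increasing in $k$ when we use only every other $k$ (say $b e^{2}<a$ fails, so we instead take even $k$ and separately odd $k$, and keep the better half). The bracket $be^{k+1}<ae^{k+2}$ requires $b<ae$, and this choice is possible. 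Within a block, the relative order of the retained $E_i$ is a Luce permutation with weights lying within a factor $e$ of each other, i.e.\ with nearly constant weights. We then invoke the superuniformity property (Proposition~\ref{prop: superuniformity property}) to say that its LIS stochastically dominates the LIS of a uniform permutation of the same (binomial, $\asymp m_k$) size. By \Cref{th: LIS uniform 2sqrt n}, and since $\expec{\LIS{\sigma_m}}\ge c\sqrt m$ for all $m$, the expected LIS of this block is $\ge c\sqrt{m_k}$. Summing over the retained parity class gives $\expec{\LIS{\tau_n}}\ge c'\sum_k\sqrt{m_k}$.

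\emph{Upper bound.} This is the main obstacle, since an increasing subsequence may use indices from many blocks with very different value scales, and exponential tails allow $E_i$ for $i\in B_k$ to be tiny or huge. The plan is to partition the value axis dyadically in $\log$-scale, into bands $J_j=[e^j,e^{j+1})$. An increasing subsequence visits the bands in increasing order of $j$ and the blocks in increasing order of $k$. It therefore decomposes into pieces in cells $(B_k, J_j)$ forming a monotone staircase path in the $(k,j)$ grid, so
\[
\LIS{\tau_n}\le \max_{\text{paths}}\sum_{(k,j)\in\text{path}} L_{k,j},
\]
where $L_{k,j}$ is the LIS of the points with index in $B_k$ and $E_i\in J_j$. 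The number of such points $N_{k,j}$ is binomial with mean $\lesssim m_k\, p_{j-k}$, where $p_r\approx \min(e^{r},e^{-e^{r}})$ decays geometrically as $r\to-\infty$ and superexponentially as $r\to+\infty$. A single cell has $\expec{L_{k,j}}\le C\sqrt{\expec{N_{k,j}}}$, by bounding the LIS of $N$ points with near-equal weights by that of a uniform permutation; superuniformity is the wrong direction here, but a comparison using near-equal weights, or the crude bound via the expected number of increasing $\ell$-tuples $\binom{N}{\ell}/\ell!\cdot e^{O(\ell)}$, should suffice. To control the maximum over paths, bound it by the sum over all cells of a path-independent dominating quantity. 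Each $k$ meets the path along a horizontal run and each $j$ along a vertical run, and it is cleaner to bound
\[
\max_{\text{path}}\sum \le \sum_k \Bigl(\text{LIS of block } k \text{ restricted to } E_i\in e^{k}[e^{-r_0},e^{r_0}]\Bigr) + \text{(off-diagonal cells)}.
\]
The first term is $\le C\sqrt{m_k}$ per block. For the off-diagonal cells $|j-k|>r_0$, bound their total contribution by the total number of points, or by $\sum\sqrt{N_{k,j}}$ with weights $\sqrt{p_{j-k}}$. These are summable in $j-k$, giving $C\sum_k\sqrt{m_k}$. The delicate point is that a path can pass diagonally through many cells. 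We handle this by noting that the path uses each $j$ in a contiguous range of $k$'s, and summing $\sqrt{N}$ bounds cell by cell, which is subadditive-safe since $\expec{\LIS{}}$ of the union is at most the sum over cells of the cell LIS. This yields $\expec{\LIS{\tau_n}}\le C\sum_k\sum_r\sqrt{m_k p_r}\le C'\sum_k\sqrt{m_k}$.
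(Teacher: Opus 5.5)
Your lower bound follows the paper's scheme (diagonal value windows, concatenation across blocks, binomial thinning, $\ell_r\ge\sqrt r$). However, the step invoking Proposition~\ref{prop: superuniformity property} fails as written.

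\begin{itemize}
\item Conditioning on $X_i\in[a,b]$ makes $E_i$ an $\Exp{\theta_i}$ variable conditioned on the $i$-dependent interval $[a/\theta_i,b/\theta_i]$. The proposition requires a common interval $[c,d]$.
\item The retained subpermutation is not Luce at all. If $i<j$ lie in the same block with $\theta_i/\theta_j\ge b/a$ (possible, since $b/a<e$), the pair is increasing almost surely, which never happens under a Luce law.
\item The conclusion you want is still true, and more elementary. Given the retained set, the $X_i$ are i.i.d.\ with a continuous law, so they induce a uniform permutation. Since $1/\theta_i$ is non-decreasing in $i$, $i<j$ and $X_i<X_j$ give $E_i\le X_i/\theta_j<E_j$. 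Hence $\LIS{E}\ge\LIS{X}$ pathwise, and superuniformity is not needed.
\item Alternatively, do as the paper does and restrict $E_i$ itself to the common window $J_k=[e^k,e^{k+1})$. There the proposition applies, and the windows are already disjoint and increasing in $k$, so the parity splitting is unnecessary.
\end{itemize}

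For the upper bound, your route is correct in the end but much longer than needed, and the obstacle you describe is not real. The paper uses subadditivity over the index partition, $\LIS{E_1,\ldots,E_n}\le\sum_k\LIS{E_i:i\in B_k}$. It combines this with the pointwise bound $\theta_ie^{-\theta_ix}\le e\cdot e^{-k-1}e^{-e^{-k-1}x}$ for $i\in B_k$. Then \Cref{lem: upper bound on density gives upper bound on LIS} with $R=e$ gives $2e^{3/2}\sqrt{m_k}$ per block. The spread of values (tiny or huge $E_i$) is harmless, because the count $\prob{E_{i_1}<\cdots<E_{i_\ell}}\le R^\ell/\ell!$ only needs domination by a single reference density.

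Your cells $(B_k,J_j)$ refine the blocks, so $\sum_jL_{k,j}\ge\LIS{E_i:i\in B_k}$. The value bands and the staircase-path discussion can therefore only lose. The per-cell estimate $\expec{L_{k,j}}\le C\sqrt{\expec{N_{k,j}}}$, which you leave at ``should suffice'', is proved by that same lemma. Conditionally on which indices land in the cell, their densities on $J_j$ are still dominated by $e$ times one probability density; then apply Jensen. With that, and with $\sum_r\sqrt{p_r}<\infty$ for your $p_r$, your chain closes.

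Of the alternatives you list, though, only the tuple count works. There is no stochastic domination by a uniform permutation, since superuniformity points the other way. Bounding the off-diagonal cells by their number of points gives order $n$ rather than $\sqrt n$, already for constant weights.
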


\begin{remark}
    Since $\sum_{k\in\mathbb Z}m_k^{(n)}=n$, it is easily seen that $\sum_{k\in\bZ} \sqrt{m_k^{(n)}} \ge \sqrt{n}$.
    Even better, Corollary~\ref{cor: Luce LIS is at least 2 sqrt n} shows that $\expec{\LIS{\tau_n}}$ is at least $2\sqrt n$ asymptotically.
\end{remark}

The proof crucially relies on the exponential representation of Luce permutations (Lemma~\ref{lem: exponential representation}).
Then, the upper bound is quite simple.
It uses an elementary partitioning technique, along with Lemma~\ref{lem: upper bound on density gives upper bound on LIS}, which provides an upper bound for the LIS of random variables that are independent but not necessarily identically distributed.
The lower bound requires more work, and a finer partitioning.
On each subset of our partitioning, we use a ``superuniformity'' property of the Luce model:
under the assumption that the weights are non-increasing, if $I$ and $J$ are two intervals in $[n]$, then the LIS of the subpermutation $\{ \tau_n(i) : i\in I, \tau_n(i) \in J \}$ stochastically dominates the LIS of a uniformly random permutation of same size.
This property, formulated slightly differently in Proposition~\ref{prop: superuniformity property} in terms of continuous random variables, is proved in \Cref{sec: superuniformity}.

\medskip

Before moving on with the proofs, let us present some applications of \Cref{th: LIS growth rate - general}.
We start with the computation of the LIS growth rate for Mallows-like weights.

\begin{example}\label{ex: Mallows-like weights LIS growth rate}
    Consider the Mallows-like weights $\theta_k^{(n)} = e^{-ku_n}$ where $u_n > 0$.
    Then, for $0\le k\le nu_n$, $m_k^{(n)}$ is the number of positive integers at most $n$ in the interval $ [k/u_n , (k+1)/u_n)$ of length $1/u_n$.
    \begin{itemize}
        \item Assume $\liminf u_n > 0$.
        Then, all $m_k^{(n)}$'s are bounded.
        Since $\sum_{k\ge0} m_k^{(n)} = n$, we get that $\sum \sqrt{m_k^{(n)}} \asymp n$.
        \item Assume $nu_n \to 0$.
        Then, for $n$ large enough, $m_0^{(n)} = n$.
        Therefore, $\sum \sqrt{m_k^{(n)}} \asymp \sqrt n$.
        \item Assume $u_n \to 0$ and $\liminf nu_n > 0$.
        {For every $k\ge0$, we have $m_k^{(n)}\le 1/u_n+1$, and at most $\lfloor nu_n\rfloor+1$ of these counts are nonzero. Consequently,
        $\frac{n}{\sqrt{1/u_n+1}}
            \le \sum_{k\ge0}\sqrt{m_k^{(n)}}
            \le \sqrt{n\bigl(\lfloor nu_n\rfloor+1\bigr)}$.
        Indeed, the lower bound follows by summing
        $m_k^{(n)}\le \sqrt{1/u_n+1}\sqrt{m_k^{(n)}}$,
        while the upper bound follows from the Cauchy--Schwarz inequality and $\sum_{k\ge0}m_k^{(n)}=n$.}
        We deduce that $\sum \sqrt{m_k^{(n)}} \asymp n \sqrt{u_n}$.
    \end{itemize}
\end{example}

Now, we focus on the following important class of weights:
we are given a positive, finite, non-increasing function $f$ on $[0,1)$, 
normalized so that $f(0)=1$, 
and we choose the ``canonically associated'' weights $\bth^{(n)}(f) := \big( f\big(\frac{i-1}{n} \big) \big)_{1\le i\le n}$.
In what follows, the constants hidden by the $\asymp$ symbols may depend on the function $f$.
 
\begin{proposition}\label{prop: growth rate with phi}
    Consider the weights $\bth^{(n)}(f)$, and let $m_k^{(n)}$ be as in \Cref{th: LIS growth rate - general}.
    For each $k\in\bN_0$, set
    \begin{equation*}
        A_k := \{ x\in(0,1) : k\le -\log f(x) < k+1 \}
        \quad\text{and}\quad
        p_k := \Leb(A_k) \,.
    \end{equation*}
    {Assume that the sequence $(p_k)_{k\ge0}$ is non-increasing (this holds, e.g., if $-\log f$ is convex).}
    Then:
    \begin{equation*}
        \sum_{k\in\bN_0} \sqrt{m_k^{(n)}}
        \asymp \sum_{k\in\bN_0} \min\{np_k , \sqrt{np_k} \}
        =: \psi_f(n) \,.
    \end{equation*}
\end{proposition}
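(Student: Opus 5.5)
The plan is to compare $m_k^{(n)}$ with $np_k$ term by term, using that the sets $A_k$ are intervals, and to handle the tail of small $p_k$ through the monotonicity assumption. Since $f$ is non-increasing and $f(0)=1$, the function $-\log f$ is non-decreasing, finite and nonnegative. Hence each $A_k$ is an interval of length $p_k$, these intervals are ordered from left to right as $k$ increases, and $B_k^{(n)}=\{i\in[n] : (i-1)/n\in A_k\}$, up to the single point $0$, which lies in $B_0^{(n)}$. The sums are therefore over $k\in\bN_0$ only. An interval of length $p$ contains between $np-1$ and $np+1$ points of the grid $\frac1n\bZ$. This gives
\[
np_k-1\le m_k^{(n)}\le np_k+2 ,
\]
and more generally, for any union $\bigcup_{k\ge K}A_k$, which is again an interval, $\bigl|\sum_{k\ge K}m_k^{(n)}-n\sum_{k\ge K}p_k\bigr|\le 2$. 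I will also use the a priori bound $\psi_f(n)\ge \sqrt n\ge1$. It holds because $\min\{np_k,\sqrt{np_k}\}\ge np_k/\sqrt n$ and $\sum_k p_k=1$. Similarly $\sum_k\sqrt{m_k^{(n)}}\ge\sqrt n\ge1$. So additive $\cO(1)$ errors are harmless on both sides.

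\emph{Upper bound.} Since $(p_k)$ is non-increasing, there is a threshold $K$ with $np_k\ge1$ for $k<K$ and $np_k<1$ for $k\ge K$.
\begin{itemize}
\item For $k<K$: $\sqrt{m_k^{(n)}}\le\sqrt{np_k+2}\le\sqrt{3np_k}=\sqrt3\min\{np_k,\sqrt{np_k}\}$.
\item For $k\ge K$: each $m_k^{(n)}\le 3$, so $\sqrt{m_k^{(n)}}\le m_k^{(n)}$. Then
\[
\sum_{k\ge K}\sqrt{m_k^{(n)}}\le\sum_{k\ge K}m_k^{(n)}\le n\sum_{k\ge K}p_k+2=\sum_{k\ge K}\min\{np_k,\sqrt{np_k}\}+2 .
\]
\end{itemize}
The extra $2$ is absorbed using $\psi_f(n)\ge1$.

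\emph{Lower bound.} Let $K'$ be the first index with $np_k<4$; by monotonicity, $np_k\ge4$ exactly for $k<K'$.
\begin{itemize}
\item For $k<K'$: $m_k^{(n)}\ge np_k-1\ge\frac34np_k$, so $\sqrt{m_k^{(n)}}\gtrsim\sqrt{np_k}=\min\{np_k,\sqrt{np_k}\}$.
\item For $k\ge K'$: $m_k^{(n)}\le np_k+2<6$, so $\sqrt{m_k^{(n)}}\ge m_k^{(n)}/\sqrt6$. Set $L:=\sum_{k\ge K'}p_k$. Then
\[
\sum_{k\ge K'}\sqrt{m_k^{(n)}}\ge\frac{nL-1}{\sqrt6} ,
\]
while $\sum_{k\ge K'}\min\{np_k,\sqrt{np_k}\}\le nL$.
\end{itemize}
If $nL\ge2$, the tail bound is comparable to $nL$ and we are done. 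If $nL<2$, the tail part of $\psi_f(n)$ is at most $2\le 2\sum_k\sqrt{m_k^{(n)}}$, which is again absorbed.

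The main obstacle is exactly the regime of intervals $A_k$ of length below $1/n$. Individually, such intervals can capture a grid point or not, almost arbitrarily, so no termwise comparison holds there. The monotonicity of $(p_k)$ is what makes these short intervals form a single contiguous tail interval, whose total grid count is controlled up to an $\cO(1)$ error. Without the assumption, many tiny $A_k$ scattered between long ones could each contribute $1$ to $\sum_k\sqrt{m_k^{(n)}}$ while contributing only $np_k$ to $\psi_f(n)$. When $-\log f$ is convex, its inverse is concave, so the $p_k$ are increments of a concave function and are non-increasing, which justifies the parenthetical claim in the statement.
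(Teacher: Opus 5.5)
Your proof is correct and follows essentially the same argument as the paper. Both use the grid count $np_k\pm\cO(1)$ for each interval $A_k$ and a threshold on $np_k$ (the paper uses the single cutoff $p_k\ge 2/n$) separating termwise square-root comparison from linear comparison, with monotonicity of $(p_k)$ making the small-$p_k$ sets a single tail interval whose grid count is off by only a constant; your absorption of the additive constants via $\psi_f(n)\ge\sqrt n$ and $\sum_k\sqrt{m_k^{(n)}}\ge\sqrt n$ just spells out the step the paper calls easy (and using $nL-1$ after stating a tail error of $2$ is a harmless slip).
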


\begin{proof}
    With our definition, $m_k^{(n)}$ is simply the number of integers $i\in[0,n-1]$ such that $i/n$ lies in the interval $A_k$.
    This interval has length $p_k$, so 
    \begin{equation}\label{eq: bound m_k np_k}
        np_k - 1 \le m_k^{(n)} \le np_k + 1 \,.
    \end{equation}
    Let  $K_n \in \bN_0$ be such that $p_k\ge 2/n$ for $k \le K_n$ and $p_k<2/n$ for $k > K_n$.
    For $k\le K_n$, thanks to \eqref{eq: bound m_k np_k}, $\frac12 np_k \le m_k^{(n)} \le 2np_k$, and thus:
    \begin{equation}\label{eq: comparison on big p_k}
        \frac{1}{2} \sum_{0\le k\le K_n} \min\{np_k , \sqrt{np_k} \}
        \le \sum_{0\le k\le K_n} \sqrt{m_k^{(n)}}
        \le 2 \sum_{0\le k\le K_n} \min\{np_k , \sqrt{np_k} \} \,.
    \end{equation}
    On the other hand, for $k> K_n$, it holds that $np_k < 2$ and by \eqref{eq: bound m_k np_k} that $m_k^{(n)} \le 3$, and thus:
    \begin{equation}\label{eq: linear approx for small parameters}
        \frac12 np_k \le \min\{ np_k, \sqrt{np_k} \} \le np_k
        \quad\text{and}\quad
        \frac12 m_k^{(n)} \le \sqrt{m_k^{(n)}} \le m_k^{(n)} \,.
    \end{equation}
    Hence, it remains only to argue that
    \begin{equation*}
        \sum_{k> K_n} m_k^{(n)}
        \simeq \sum_{k> K_n} np_k 
    \end{equation*}
    in the right sense.
    However, observe that the above quantities are equal to
    \begin{equation*}
        \card{ i\in[n] : -\log f\big(\tfrac{i-1}{n}\big) \geq K_n+1} \,,
        \quad\text{resp.}\quad
        n\Leb\big(\big\{ x\in(0,1) : -\log f(x) \geq K_n+1 \big\}\big) \,.
    \end{equation*}
    Thanks to the monotony of $f$, the set $\big\{ x\in(0,1) : -\log f(x) \geq K_n+1 \big\}$ coincides (up to a set of null measure) with an interval $(1-\varepsilon_n, 1)$, and the left-hand side above is the number of coordinates $(i-1)/n$ in this interval.
    Therefore:
    \begin{equation}\label{eq: comparison on small p_k}
        \abs{
        \sum_{k> K_n} m_k^{(n)}
        - \sum_{k> K_n} np_k 
        } \le 2 \,.
    \end{equation}
    Using \cref{eq: comparison on big p_k,eq: linear approx for small parameters,eq: comparison on small p_k}, it is easy to conclude the proof.
\end{proof}

\begin{remark}\label{rem: functions that diverge at 0}
    One could wish to consider functions $f$ on $(0,1)$ that are allowed to diverge at $0$, tweaking the weights to be $\theta_i := f\big(\frac{i}{n+1}\big)$.
    There would be no obstacle in extending the proof of Proposition~\ref{prop: growth rate with phi} to include such functions $f$, the main modifications being that the indices $k$ should be taken in $\bZ$ and that $(p_{-k})_{k\ge0}$ should be non-increasing for large $k$.
    However, there would be no real gain in generality:
    since the set $A_{-k}^{(f)}$ associated with $f$ (almost) coincides with the set $A_{k-1}^{(1/f)}$ associated with $1/f$, it suffices to study the behaviors of both $x\mapsto f(x)$ and $x\mapsto f(1-x)^{-1}$ close to $1$.
    More precisely, letting 
    \[
        f_1(x) := \min\{ 1 , f(x) \}
        \quad\text{and}\quad
        f_2(x) := \min\left\{ 1 , \frac{1}{f(1-x)} \right\} \,,
    \]
    and $m_k^{(n)}$ be defined in terms of the weights $\theta_i^{(n)} := f\big(\frac{i}{n+1}\big)$, one could show that if $f_1$ and $f_2$ both satisfy the assumptions of Proposition~\ref{prop: growth rate with phi}, then:
    \begin{equation*}
        \sum_{k\in\bZ} \sqrt{m_k^{(n)}} \asymp \psi_{f_1}(n) + \psi_{f_2}(n) \,.\qedhere
    \end{equation*}
\end{remark}

\begin{remark}
    If $f$ is continuous and strictly decreasing on $(0,1)$, then we can write $p_k = f^{-1}(e^{-(k+1)}) - f^{-1}(e^{-k})$ for each $k\in\bN_0$ such that these inverses exist.
    They do exist for all $k\in\bN_0$ whenever $\inf f = 0$.
    Otherwise, if $f$ is uniformly bounded below on $[0,1)$, then it is easy to see that $\psi_f(n) \asymp \sqrt{n}$.
    Indeed, in that case, $m_k^{(n)} = 0$ for all $k > \sup (-\log f)$.
\end{remark}

\begin{remark}
    In the present context, if $\tau_n$ is a Luce random permutation with weights $\bth^{(n)}(f)$, then $\expec{\LIS{\tau_n}} = o(n)$ always holds regardless of the function $f: [0,1) \to (0,\infty)$.
    This is a simple consequence of the permuton convergence of \Cref{th: Luce limit permuton}, and the fact that the limit permuton $\mu_f$ has a density;
    see, e.g., \cite[Proposition~2.3]{Bassino_Bouvel_Drmota_Feray_Gerin_Maazoun_Pierrot_2022} or \cite[Proposition~2.1]{Dubach_2024}.
    In contrast, if one considers e.g.\ the ``exponential Sukhatme'' weights $\theta_k := \alpha^{-k}$ for some $\alpha>1$, then \Cref{th: Luce limit permuton} does not apply and \Cref{th: LIS growth rate - general} gives that $\expec{\LIS{\tau_n}} \asymp n$.
\end{remark}

Now, let us give criteria for $\psi_f(n)$ to be of order $\sqrt n$ or $n^\beta$ for some $\beta \in (1/2, 1)$.

\begin{lemma}\label{lem: summable condition for sqrt n regime}
    %Let $f$ be a positive, finite, measurable function on $[0,1)$.
    Let $(p_k)_{k\ge0}$ and $n\in\bN \mapsto \psi_f(n)$ be as in Proposition~\ref{prop: growth rate with phi}.
    Then:
    \begin{equation*}
        \psi_f(n) = \cO(\sqrt{n})
        \quad\text{ if and only if }\quad
        \sum_{k\ge0} \sqrt{p_k} < \infty \,.
    \end{equation*}
\end{lemma}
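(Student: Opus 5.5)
The plan is to compare $\psi_f(n)/\sqrt n$ term by term with $\sum_k \sqrt{p_k}$. Write
\[
\frac{\psi_f(n)}{\sqrt n} = \sum_{k\ge0} \min\{\sqrt n\, p_k , \sqrt{p_k}\} = \sum_{k\ge0} \sqrt{p_k}\,\min\{\sqrt{np_k},1\}.
\]
Each summand is at most $\sqrt{p_k}$. So if $\sum_k\sqrt{p_k}<\infty$, then $\psi_f(n)\le \sqrt n\sum_k\sqrt{p_k}$ for every $n$, and $\psi_f(n)=\cO(\sqrt n)$. This direction is immediate.

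For the converse, assume $\psi_f(n)\le C\sqrt n$ for all $n$. For each fixed $k$, the summand $\sqrt{p_k}\min\{\sqrt{np_k},1\}$ is non-decreasing in $n$. It converges to $\sqrt{p_k}$ as $n\to\infty$: if $p_k>0$ the minimum eventually equals $1$, and if $p_k=0$ both sides vanish. By the monotone convergence theorem for series (or simply truncating to $k\le N$, letting $n\to\infty$, then $N\to\infty$),
\[
\sum_{k\ge0}\sqrt{p_k} = \lim_{n\to\infty} \frac{\psi_f(n)}{\sqrt n} \le C < \infty.
\]
With the finite-truncation argument, this uses only that the summands are non-negative.

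There is no real obstacle. The only point to check is that $\psi_f$ is exactly the quantity defined in Proposition~\ref{prop: growth rate with phi}, and that the assumption that $(p_k)$ is non-increasing is not even needed for this equivalence. It matters only for the comparison with $\sum_k\sqrt{m_k^{(n)}}$, which Proposition~\ref{prop: growth rate with phi} then transfers to $\expec{\LIS{\tau_n}}$ through \Cref{th: LIS growth rate - general}.
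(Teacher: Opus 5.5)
Your proposal is correct and follows essentially the same route as the paper. The forward direction is the same termwise bound. For the converse, the paper fixes $K$, chooses $n$ large enough that $np_k\ge1$ for every $k\le K$ with $p_k>0$, bounds $\sum_{k=0}^K\sqrt{p_k}$ by $C$, and then lets $K\to\infty$; this is exactly your finite-truncation argument.
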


\begin{proof}
    First, assume that the family $(\sqrt{p_k})_{k\ge0}$ is summable.
    Then, recalling the definition of $\psi_f(n)$:
    \begin{equation*}
        \psi_f(n) = \sum_{k\ge0} \min\{ np_k , \sqrt{np_k} \}
        \le \sqrt{n} \sum_{k\ge0} \sqrt{p_k} 
        = \cO(\sqrt n) \,.
    \end{equation*}
    Now, assume instead that $\psi_f(n) = \cO(\sqrt n)$.
    Fix $K\in\bN$, and take $n_0 = n_0(K)$ large enough so that
    \begin{equation*}
        \text{for all } n\ge n_0,\quad
        \text{for all } k\in[0,K] \text{ such that $p_k > 0$},\quad
        np_k \ge 1 \,,
    \end{equation*}
    so that $\min\{ np_k , \sqrt{np_k} \} = \sqrt{np_k}$ for $n\ge n_0$ and $k\in[0,K]$.
    This gives
    \begin{equation*}
        \sum_{k=0}^K \sqrt{p_k} \le \frac{\psi_f(n)}{\sqrt{n}} \le C
    \end{equation*}
    for some absolute constant $C> 0$.
    Letting $K\to\infty$, this concludes the proof.
\end{proof}

\begin{example}\label{ex: Sukhatme}
    Thanks to Proposition~\ref{prop: growth rate with phi} and Lemma~\ref{lem: summable condition for sqrt n regime}, we can easily find functions $f$ for which the growth rate of the LIS is $\sqrt n$.
    \begin{itemize}
        \item Let $\alpha>0$ and $f: x\in[0,1] \mapsto (1-x)^\alpha$.
        The function $-\log f$ is convex.
        We have $p_k = e^{-k/\alpha} \big( 1 - e^{-1/\alpha} \big)$ for $k\ge0$, so $\sum_k \sqrt{p_k} < \infty$.
        As an application, Luce random permutations with Sukhatme weights $\theta_i = n-i+1$ and polynomial Sukhatme weights $\theta_i = (n-i+1)^\alpha$ have an LIS growth rate of order $\sqrt n$.
        \item Let $\alpha>0$ and $f: x\in[0,1] \mapsto \exp{\left[1-(1-x)^{-\alpha} \right]}$.
        The function $-\log f$ is convex.
        We have $p_k 
        %= k^{-1/\alpha} - (k+1)^{-1/\alpha} 
        \asymp k^{-\frac{\alpha+1}{\alpha}}$ for $k\ge 1$, so $\sum_k \sqrt{p_k} < \infty$ if and only if $\alpha < 1$.
        As an application, Luce random permutations with expoly-Sukhatme weights $\theta_i = \exp{\left[ -\big( \frac{n}{n-i+1} \big)^\alpha \right]}$ have an LIS growth rate of order $\sqrt n$ if and only if $\alpha < 1$.
        % \item Let $f: x\in(0,1] \mapsto -\log x$.
        % The function $-\log f$ is concave on $(0,e^{-1})$ and convex on $(e^{-1}, 1)$.
        % We have $p_k = \exp\big[ -e^{-(k+1)} \big] - \exp\big[ -e^{-k} \big]$ for each $k\in\bZ$, so $p_k \asymp e^{-k}$ for $k\in\bN$ and $p_{-k} \sim \exp\big[ -e^{k-1} \big]$ as $k\to\infty$.
        % It is easy to check that $\sum_{k\in\bZ} \sqrt{p_k} < \infty$, and thus, Luce random permutations with weights $\theta_i = \log\big(\frac{n+1}{i}\big)$ have an LIS growth rate of order $\sqrt{n}$.
    \end{itemize}
\end{example}

\begin{example}\label{ex: growth rates n beta}
    Assume that $p_k \asymp k^{-\gamma}$ for some $\gamma>1$.
    In that case, it is not difficult to directly compute
    \[
    \psi_f(n) \asymp 
    \begin{cases}
        \sqrt n &\text{ if } \gamma>2;\\
        \sqrt n \log n &\text{ if } \gamma=2;\\
        n^{1/\gamma} &\text{ if } \gamma<2.
    \end{cases}
    \]
    In particular, for $f: x\in[0,1] \mapsto \exp{\left[1-(1-x)^{-\alpha} \right]}$, we see that the LIS growth rate is $\sqrt n$ if $\alpha < 1$; $\sqrt n \log n$ if $\alpha = 1$; and $n^{\frac{\alpha}{\alpha+1}}$ if $\alpha > 1$.
\end{example}

Finally, we conclude with another formula which can be convenient for the computation of $\psi_f(n)$, in the case of smooth functions.

\begin{lemma}
    Let $f$ be a finite, strictly decreasing function on $[0,1]$, positive on $[0,1)$, with $f(0)=1$ and $f(1)=0$.
    Assume that $f$ is $\cC^1$ on $[0,1]$ and that $H := -\log f$ is convex.
    Then, with the notation of Proposition~\ref{prop: growth rate with phi}:
    \begin{equation*}
        p_k \le
        \frac{1}{H' \circ H^{-1}(k)}
        =
        \frac{-e^{-k}}{f' \circ f^{-1}(e^{-k})} 
        \le p_{k-1} 
    \end{equation*}
    for all $k\in\bN$.
\end{lemma}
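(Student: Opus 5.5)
Since $f$ is continuous and strictly decreasing from $f(0)=1$ to $f(1)=0$, the function $H=-\log f$ is a continuous increasing bijection from $[0,1)$ onto $[0,\infty)$. It is $\cC^1$ on $[0,1)$, with $H' = -f'/f$. Hence for every $k\in\bN_0$,
\[
A_k = [H^{-1}(k), H^{-1}(k+1)) \quad\text{up to endpoints,}\qquad p_k = H^{-1}(k+1)-H^{-1}(k) = \int_k^{k+1} (H^{-1})'(s)\,\mathrm ds .
\]
The plan is to express $p_k$ this way and use monotonicity of $(H^{-1})'$. The identity between the two middle expressions is a direct computation. Putting $x=H^{-1}(k)=f^{-1}(e^{-k})$, we have $f(x)=e^{-k}$ and $H'(x) = -f'(x)/e^{-k}$, so $1/H'(x) = -e^{-k}/f'(f^{-1}(e^{-k}))$.

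Convexity of $H$ means $H'$ is non-decreasing. Since $H^{-1}$ is increasing, the map $s\mapsto (H^{-1})'(s) = 1/H'(H^{-1}(s))$ is non-increasing on $(0,\infty)$. Here I use $H'>0$ on $(0,\infty)$: a convex increasing bijection can vanish in derivative only at $0$, and a zero at $H^{-1}(0)=0$ would only affect $k=0$, which is excluded since $k\in\bN$. Alternatively, the mean value / convexity chord inequality avoids differentiating $H^{-1}$ at all.

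The cleanest route is via chord slopes of the convex function $H$. Set $a=H^{-1}(k-1)$, $b=H^{-1}(k)$, $c=H^{-1}(k+1)$, so that $a<b<c$. Convexity gives
\[
\frac{H(b)-H(a)}{b-a} \le H'(b) \le \frac{H(c)-H(b)}{c-b}.
\]
Both numerators equal $1$, so this reads $1/(b-a) \le H'(b) \le 1/(c-b)$, i.e.\ $p_k = c-b \le 1/H'(b) \le b-a = p_{k-1}$, which is exactly the claim. We need $H'(b)>0$ to invert; this follows from the left inequality, since $1/(b-a)>0$.

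The only delicate point is justifying the one-sided derivative bounds at $b$ for a $\cC^1$ convex function, which is standard. One should also note that $H^{-1}(k\pm1)$ exist for all $k\in\bN$, since $H$ maps $[0,1)$ onto $[0,\infty)$ and $H^{-1}(0)=0$. With that in place the proof is essentially a single chord-slope inequality.
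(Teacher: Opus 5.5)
Your proof is correct and is essentially the paper's argument. The paper applies the mean value theorem to $H^{-1}$ on $[k,k+1]$ and then uses that $H'\circ H^{-1}$ is non-decreasing, which amounts to your chord-slope inequality for the convex function $H$ at $b=H^{-1}(k)$ with neighbours at levels $k\pm1$; your version has the minor advantage of not requiring differentiability of $H^{-1}$ and of giving $H'(b)>0$ for free.
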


\begin{proof}
    Observe that, thanks to our assumptions, $H$ is one-to-one from $(0,1)$ on $(0,\infty)$ and $H^{-1}$ is $\cC^1$ on $(0,1)$.
    For $k\in\bN$, we have $p_k = H^{-1}(k+1) - H^{-1}(k)$.
    By the mean value theorem, there exists $\xi_k \in [k,k+1]$ such that 
    \[
    p_k = (H^{-1})'(\xi_k) = \frac{1}{H' \circ H^{-1}(\xi_k)} \,.
    \]
    Since $H$ is increasing and convex, we can bound
    \[
    \frac{1}{H' \circ H^{-1}(k+1)} \le
    p_k \le \frac{1}{H' \circ H^{-1}(k)} \,,
    \]
    and the conclusion follows.
\end{proof}

\subsection{A superuniformity property}
\label{sec: superuniformity}

\begin{lemma}\label{lem: increasing ratio of exponentials}
    Let $0\le u\le v\le \infty$ with $v>0$.
    The function 
    \[G : \theta\in[0,\infty) \to \frac{1-e^{-\theta u}}{1-e^{-\theta v}} \,,\] 
    with convention 
    $e^{-\theta \times \infty} = 0$,
    is non-decreasing.
\end{lemma}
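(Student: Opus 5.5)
First I dispose of the degenerate cases. If $u=0$, then $G\equiv 0$ on $(0,\infty)$. At $\theta=0$ the expression is $0/0$; I interpret it by continuity as the limit $u/v$, which equals $0$ here, or equivalently I restrict to $\theta>0$. If $u=v$, then $G\equiv1$, with the convention that at $v=\infty$ both numerator and denominator equal $1$. If $v=\infty$ and $u<\infty$, then $G(\theta)=1-e^{-\theta u}$, which is clearly non-decreasing. So it remains to treat $0<u<v<\infty$ with $\theta\in(0,\infty)$; the value at $\theta=0$ is handled by continuity, since $G(\theta)\to u/v$ as $\theta\to0$.

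In the main case, I substitute $a:=\theta u$ and write $v=\lambda u$ with $\lambda>1$. Then
\[
G(\theta)=\frac{1-e^{-a}}{1-e^{-\lambda a}},
\]
so it suffices to show that $a\mapsto \log(1-e^{-a})-\log(1-e^{-\lambda a})$ is non-decreasing on $(0,\infty)$. Differentiating, this is equivalent to
\[
\frac{1}{e^{a}-1}\ \ge\ \frac{\lambda}{e^{\lambda a}-1} \qquad\text{for } a>0,\ \lambda>1 .
\]

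Multiplying both sides by $a>0$, the inequality becomes $h(a)\ge h(\lambda a)$, where $h(x):=x/(e^x-1)$. So the whole claim reduces to showing that $h$ is non-increasing on $(0,\infty)$. This is the only real step, and it is elementary. We have $h'(x)\le0$ if and only if $e^x-1-xe^x\le 0$, i.e. $e^{-x}\ge 1-x$, which is the standard convexity inequality for the exponential.

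Alternatively, one can avoid the change of variables: writing $\log G=g(\theta u)-g(\theta v)$ with $g(x)=\log(1-e^{-x})$, it suffices that $x\mapsto x\,g'(x)=h(x)$ is non-increasing, since $\tfrac{d}{d\theta}\log G=\tfrac1\theta\bigl(h(\theta u)-h(\theta v)\bigr)$. This is the same argument in different notation. I expect no genuine obstacle; the only care needed is in the boundary conventions ($\theta=0$, $u=0$, $v=\infty$), which are handled as above.
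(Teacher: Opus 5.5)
Your proof is correct and follows essentially the paper's route: both reduce the claim to the sign of $G'$, which is the inequality $u(e^{\theta v}-1)\ge v(e^{\theta u}-1)$ for $0<u\le v<\infty$ and $\theta>0$. The paper verifies this by noting that $ue^{\theta v}-ve^{\theta u}+(v-u)$ vanishes at $\theta=0$ and has derivative $uv(e^{\theta v}-e^{\theta u})\ge 0$, whereas you get it from the monotonicity of $x\mapsto x/(e^x-1)$ (i.e.\ $e^{-x}\ge 1-x$); your handling of the boundary cases $u=0$, $u=v$, $v=\infty$ and $\theta=0$ is, if anything, more explicit than the paper's.
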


\begin{proof}
    For $v=\infty$, this is immediate.
    For $v<\infty$, the derivative has the same sign as $ue^{\theta v} - ve^{\theta u} + (v-u)$.
    The latter is $0$ at $\theta=0$ and has derivative $uv(e^{\theta v} - e^{\theta u}) \ge 0$.
\end{proof}

\begin{proposition}\label{prop: superuniformity property}
    Let $\theta_1\ge \dots\ge \theta_k$ be a non-increasing sequence of positive weights, and fix $0\le c < d \le \infty$.
    Independently over $i\in[k]$, let $Z_i$ be a random variable distributed like $E_i/\theta_i$ conditioned to be in $[c,d]$, where $E_i\sim\Exp{1}$.
    Then,
    $\LIS{Z_1, \dots, Z_k}$ stochastically dominates $\LIS{\sigma_k}$ where $\sigma_k \sim \Unif{\fS_k}$.
\end{proposition}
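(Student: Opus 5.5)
We aim to show that the random permutation $\Perm{Z_1,\dots,Z_k}$ can be built from a uniform permutation by operations that can only increase the LIS. The mechanism is Lemma~\ref{lem: increasing ratio of exponentials}, which says that heavier weights push the conditioned variables toward smaller values. Since $\theta_1\ge\dots\ge\theta_k$, earlier indices tend to be smaller, which favours increasing patterns.

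Write $G_i(t):=\prob{Z_i\le t}=\frac{1-e^{-\theta_i (t-c)}}{1-e^{-\theta_i (d-c)}}$ for $t\in[c,d]$. This formula holds because of memorylessness: $E_i/\theta_i$ conditioned to exceed $c$ equals $c$ plus an $\Exp{\theta_i}$ variable, and we then condition on $[0,d-c]$. By Lemma~\ref{lem: increasing ratio of exponentials} with $u=t-c$ and $v=d-c$, $G_i(t)$ is non-decreasing in $\theta_i$. Hence $G_1\ge G_2\ge\dots\ge G_k$ pointwise, i.e.\ $Z_1\preceq Z_2\preceq\dots\preceq Z_k$ stochastically. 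In fact we can also check the stronger monotone likelihood ratio property: the density ratio of $Z_i$ to $Z_j$ for $i<j$ is proportional to $e^{-(\theta_i-\theta_j)t}$, which is non-increasing in $t$. We plan to use this likelihood-ratio ordering, since stochastic ordering alone may not suffice.

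The plan is an induction through adjacent transpositions, in the style of a swap argument. For each $j$, compare the law of $(Z_1,\dots,Z_k)$ with the vector obtained by replacing the pair $(Z_j,Z_{j+1})$ with an exchangeable pair of the same marginal mixture. Equivalently, we interpolate the weights $\theta_j,\theta_{j+1}$ toward a common value. Concretely, condition on all coordinates other than $j,j+1$ and on the unordered set $\{Z_j,Z_{j+1}\}=\{a<b\}$. By the likelihood-ratio property, the probability that $(Z_j,Z_{j+1})=(a,b)$ is $\frac{g_j(a)g_{j+1}(b)}{g_j(a)g_{j+1}(b)+g_j(b)g_{j+1}(a)}\ge\tfrac12$. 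Now, given the other points, putting the smaller value at the earlier position can only increase the LIS. The reason is that the two permutations differ by swapping two adjacent positions; the increasing order at positions $j,j+1$ contains the decreasing one's LIS: any increasing subsequence using at most one of the two positions survives after swapping, adjusting which one is used. So conditionally the LIS is a two-point mixture that puts mass at least $\tfrac12$ on the larger value. This yields stochastic domination over the symmetrized pair, where the two orders each have probability $\tfrac12$.

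Iterating this to reach a fully exchangeable vector is the delicate part, because symmetrizing one pair destroys independence. Instead we will use a cleaner global version. Condition on the multiset of values $\{Z_1,\dots,Z_k\}=\{y_1<\dots<y_k\}$. The conditional law of the induced permutation $\sigma$ is then proportional to $\prod_i g_i(y_{\sigma(i)})$, which equals $\prod_i e^{-\theta_i y_{\sigma(i)}}$ times a constant. This is a Luce/Gibbs-type measure $w(\sigma)\propto \exp(-\sum_i\theta_i y_{\sigma(i)})$. Whenever $\sigma$ has an inversion at positions $i<j$, so that $\sigma(i)>\sigma(j)$, the weight increases under the swap, by the rearrangement inequality. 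We then want to show that such a measure stochastically dominates the uniform measure for the LIS. To do this we construct a coupling via a Markov chain of random adjacent transpositions, sorting with probability at least $\tfrac12$ at each step. Uniform measure is stationary for the symmetric chain, and the biased chain is stationary for $w$; monotone coupling of the two chains, with the same proposed transposition and a shared uniform variable, keeps LIS(biased) $\ge$ LIS(symmetric) provided LIS is monotone in the weak Bruhat order. That is the main obstacle: LIS is not monotone along arbitrary configurations, only when the two coupled permutations are equal before the step. Hence I would try first to couple via a grand coupling, where the states coincide up to sorting, possibly replacing LIS by an RSK/Greene-type monotone statistic, or instead to exploit the explicit density ratio to reduce to a known result that measures with log-supermodular weights in this sense dominate uniform for increasing events.
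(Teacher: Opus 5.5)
There is a genuine gap: the proposal stops at its main step. Your computation of $G_i$ is correct, and so is the ordering $G_1\ge G_2\ge\dots\ge G_k$. The pairwise observation is also correct: given the other coordinates and $\{Z_j,Z_{j+1}\}=\{a<b\}$, the ascending order has conditional probability at least $\tfrac12$, and sorting an adjacent descent never decreases the LIS. But, as you note, this only symmetrizes one pair and cannot be iterated without losing independence.

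The global Markov-chain coupling is then left unfinished exactly where it matters. Nothing shows that the biased and symmetric chains, driven by the same transposition and the same uniform, stay comparable in an order for which $\LIS{\cdot}$ is monotone. Once the two states differ, the same update acts on different pairs of values. Moreover, having fewer inversions does not control the LIS: resolving the non-adjacent inversion at positions $2$ and $4$ of $2451$ gives $2154$, and the LIS drops from $3$ to $2$. The remedies you suggest (an RSK/Greene statistic, or an FKG-type domination for log-supermodular weights) are only named, not carried out.

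The missing idea is elementary, and it is exactly the paper's argument. Your remark that stochastic ordering alone may not suffice is mistaken. For \emph{independent} coordinates with continuous distribution functions, the ordering $G_i\ge G_j$ ($i<j$) can be realized by a quantile coupling that preserves every increasing pair.

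Let $U_1,\dots,U_k$ be i.i.d.\ uniform on $[0,1]$ and set $Z_i:=G_i^{-1}(U_i)$. This has the right joint law, since each $G_i$ is continuous and strictly increasing on $[c,d]$ (with the obvious convention when $d=\infty$). If $i<j$ and $U_i<U_j$, then
\[
Z_i=G_i^{-1}(U_i)<G_i^{-1}(U_j)=G_i^{-1}\big(G_j(Z_j)\big)\le G_i^{-1}\big(G_i(Z_j)\big)=Z_j ,
\]
using that $G_i^{-1}$ is increasing and $G_j\le G_i$. Hence every increasing subsequence of $(U_1,\dots,U_k)$ is an increasing subsequence of $(Z_1,\dots,Z_k)$. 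So $\LIS{U_1,\dots,U_k}\le\LIS{Z_1,\dots,Z_k}$ almost surely, and $\LIS{U_1,\dots,U_k}$ has the law of $\LIS{\sigma_k}$. No likelihood-ratio ordering, conditioning on the set of values, or Markov chain is needed.
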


\begin{proof}
    If $d=\infty$, the subsequent formulas are to be understood with the usual conventions.
    For $i\in[k]$, let $G_i$ denote the distribution function of $Z_i$:
    \[
        \text{for } t\in[c,d],\quad
        G_i(t) = \prob{Z_i \le t}
        = \frac{\int_{c}^t \theta_i e^{-s\theta_i} \mathrm{d}s}{\int_{c}^{d} \theta_i e^{-s\theta_i} \mathrm{d}s}
        = \frac{e^{-c\theta_i} - e^{-t\theta_i}}{e^{-c\theta_i} - e^{-d\theta_i}}
        = \frac{1 - e^{-(t-c)\theta_i}}{1 - e^{-(d-c)\theta_i}} \,.
    \]
    By Lemma~\ref{lem: increasing ratio of exponentials}, we know that for any fixed $t\in(c,d)$:
    \begin{equation}\label{eq: decreasing distribution function}
        \text{for all } 1\le i< j\le k,\quad
        G_i(t) \ge G_j(t) \,.
    \end{equation}
    Now, let $U_1, \dots, U_k$ be i.i.d.\ random variables uniformly distributed on $[0,1]$, so that $(Z_1, \dots, Z_k)$ is distributed like $\big( G_1^{-1}(U_1), \dots, G_k^{-1}(U_k) \big)$.
    We claim that 
    \begin{equation}\label{eq: claim coupling LIS CDF}
        \LIS{U_1, \dots, U_k} \le \LIS{G_1^{-1}(U_1), \dots, G_k^{-1}(U_k)} \,.
    \end{equation}
    Indeed, if $i<j$ are such that $U_i < U_j$, then
    \begin{align*}
        G_i^{-1}(U_i) < G_i^{-1}(U_j) = G_i^{-1}(G_j(G_j^{-1}(U_j))) \le G_i^{-1}(G_i(G_j^{-1}(U_j))) = G_j^{-1}(U_j) \,,
    \end{align*}
    where we used that $t \mapsto G_i(t)$ is increasing for the first inequality, and \eqref{eq: decreasing distribution function} for the second inequality.
    This proves \eqref{eq: claim coupling LIS CDF}, and thus, that $\LIS{Z_1, \dots, Z_k}$ stochastically dominates $\LIS{U_1, \dots, U_k}$.
    We conclude by recalling that the random variable $\LIS{U_1, \dots, U_k}$ is distributed like the LIS of a uniformly random permutation.
\end{proof}

% It will be useful to reformulate this result in a context closer to \Cref{th: Luce limit permuton}:

% \begin{corollary}\label{cor: superuniformity property continuous}
%     Let $f:[0,1) \to \bR_+^*$ be a measurable non-increasing function, and let $F$ be defined by \eqref{eq: function F}.
%     Fix $0\le a<b \le1$ and $0\le x_1< \dots< x_k< 1$.
%     Independently over $i\in[k]$, let $Y_i$ be a random variable distributed like $F(E/f(x_i))$ conditioned to be in $[a,b]$, where $E\sim\Exp{1}$.
%     Then:
%     $\LIS{Y_1, \dots, Y_k}$ stochastically dominates $\LIS{\sigma_k}$ where $\sigma_k \sim \Unif{\fS_k}$.
% \end{corollary}

% \begin{proof}
%     Write $(c, d) := (F^{-1}(a), F^{-1}(b))$.
%     For $i\in[k]$, let $Z_i := F^{-1}(Y_i)$ and note that it is distributed like $E/f(x_i)$ conditioned to be in $[c, d]$.    
%     Thanks to Proposition~\ref{prop: superuniformity property}, we know that  $\LIS{Z_1, \dots, Z_k}$ stochastically dominates $\LIS{\sigma_k}$.
%     Since $F$ is increasing, it holds that $\LIS{Z_1, \dots, Z_k} = \LIS{Y_1, \dots, Y_k}$, concluding the proof.
% \end{proof}

\begin{corollary}\label{cor: Luce LIS is at least 2 sqrt n}
    For each $n\in\bN$, let $\bth^{(n)} = (\theta_1^{(n)} \ge \dots \ge \theta_n^{(n)})$ be a non-increasing sequence of positive weights, and let $\tau_n$ be a $\Luce{\bth^{(n)}}$ random permutation.
    Then, for any given $\varepsilon>0$:
    \[
    \prob{\LIS{\tau_n} \le (2-\varepsilon)\sqrt{n}} \cv{n\to\infty} 0 \,.
    \]
\end{corollary}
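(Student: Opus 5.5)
This follows directly from the superuniformity property (Proposition~\ref{prop: superuniformity property}) applied with no conditioning, combined with \Cref{th: LIS uniform 2sqrt n}.

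First, by the exponential representation (Lemma~\ref{lem: exponential representation}), $\tau_n$ has the law of $\Perm{E_1/\theta_1^{(n)}, \ldots, E_n/\theta_n^{(n)}}$, where $E_1,\ldots,E_n$ are i.i.d.\ $\Exp{1}$. Indeed, $E_i/\theta_i^{(n)}$ is exponential with parameter $\theta_i^{(n)}$. Since $\LIS{\Perm{y_1,\ldots,y_n}} = \LIS{y_1,\ldots,y_n}$, we get
\[
\LIS{\tau_n} \overset{d}{=} \LIS{E_1/\theta_1^{(n)}, \ldots, E_n/\theta_n^{(n)}}.
\]

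Next, apply Proposition~\ref{prop: superuniformity property} with $k=n$, $c=0$ and $d=\infty$. Conditioning on $[0,\infty]$ is vacuous, so $Z_i = E_i/\theta_i^{(n)}$, and the weights are non-increasing by assumption. Hence $\LIS{\tau_n}$ stochastically dominates $\LIS{\sigma_n}$ with $\sigma_n\sim\Unif{\fS_n}$. In particular, for every $\varepsilon>0$,
\[
\prob{\LIS{\tau_n}\le(2-\varepsilon)\sqrt n} \le \prob{\LIS{\sigma_n}\le(2-\varepsilon)\sqrt n}.
\]
Strictly speaking, stochastic domination is stated for events of the form $\{X\ge t\}$. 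Taking complements gives $\prob{X<t}\le\prob{Y<t}$, and applying this with $t$ slightly above $(2-\varepsilon)\sqrt n$ (the variables are integer-valued) yields the inequality for $\le$.

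Finally, by \Cref{th: LIS uniform 2sqrt n}, $\LIS{\sigma_n}/\sqrt n\to2$ in probability, so the right-hand side tends to $0$. This concludes the proof. There is no real obstacle; the only point to check is that the case $d=\infty$ of the proposition is legitimate, and it is, with $G_i(t)=1-e^{-t\theta_i}$ increasing in $\theta_i$.
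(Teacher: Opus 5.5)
Your proposal is correct and follows essentially the same route as the paper: the exponential representation, Proposition~\ref{prop: superuniformity property} with $c=0$ and $d=\infty$, and then \Cref{th: LIS uniform 2sqrt n}. Your extra remarks, converting the $\ge$-form of stochastic domination to the $\le$ event and checking the $d=\infty$ case via $G_i(t)=1-e^{-t\theta_i}$, are valid details that the paper leaves implicit.
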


\begin{proof}
    By Lemma~\ref{lem: exponential representation}, we know that $\LIS{\tau_n}$ is distributed like $\LIS{E_1/\theta_1^{(n)}, \dots, E_n/\theta_n^{(n)}}$, where $E_1, \dots, E_n$ is a sequence of i.i.d.~$\Exp{1}$ random variables.
    The conclusion follows from \Cref{th: LIS uniform 2sqrt n} and Proposition~\ref{prop: superuniformity property} (with $c=0$ and $d=\infty$).
\end{proof}

\subsection{Proof of \Cref{th: LIS growth rate - general}}
\label{sec: proof growth rate}

\begin{lemma}\label{lem: upper bound on density gives upper bound on LIS}
    Let $Y_1, \dots, Y_m$ be a sequence of independent real random variables, where $Y_i$ has a density $g_i$ on $\bR$.
    Assume that there exists a density $g$ on $\bR$ and a constant $R\ge1$ such that for all $i\in[m]$ and $x\in\bR$, $g_i(x) \le R g(x)$.
    Then for any $\delta>0$:
    \[ 
    \prob{ \LIS{Y_1, \ldots, Y_m} \ge (1+\delta)e\sqrt{Rm} } \le \left(\frac{1}{1+\delta}\right)^{\sqrt{Rm}}
    \quad\text{and}\quad
    \expec{\LIS{Y_1, \ldots, Y_m}} \le 2e \sqrt{Rm} \,.
    \]
\end{lemma}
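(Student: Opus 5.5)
We use a first-moment bound on the number of increasing subsequences of a given length. Fix $\ell\in[m]$ and let $N_\ell$ be the number of index sets $i_1<\dots<i_\ell$ with $Y_{i_1}<\dots<Y_{i_\ell}$. Then $\prob{\LIS{Y_1,\ldots,Y_m}\ge \ell}\le \expec{N_\ell}$.

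\textbf{Step 1: the first moment.} For a fixed index set, independence gives
\[
\prob{Y_{i_1}<\dots<Y_{i_\ell}} = \int_{x_1<\dots<x_\ell} \prod_{j} g_{i_j}(x_j)\,\mathrm dx \le R^\ell \int_{x_1<\dots<x_\ell}\prod_j g(x_j)\,\mathrm dx = \frac{R^\ell}{\ell!}.
\]
The last equality holds because, for i.i.d.\ points with density $g$, each of the $\ell!$ orderings is equally likely and ties have probability zero. Summing over the $\binom{m}{\ell}\le m^\ell/\ell!$ index sets gives $\expec{N_\ell}\le (Rm)^\ell/(\ell!)^2$. With $\ell!\ge(\ell/e)^\ell$ this becomes
\[
\expec{N_\ell}\le \left(\frac{e^2Rm}{\ell^2}\right)^{\ell}.
\]

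\textbf{Step 2: the tail bound.} Take $\ell=\lceil(1+\delta)e\sqrt{Rm}\rceil$. Then $e^2Rm/\ell^2\le(1+\delta)^{-2}$ and $\ell\ge\sqrt{Rm}$, so
\[
\prob{\LIS{Y_1,\ldots,Y_m}\ge(1+\delta)e\sqrt{Rm}}\le (1+\delta)^{-2\ell}\le (1+\delta)^{-\sqrt{Rm}}.
\]
This is the first claim. If $\ell>m$, the probability is trivially zero.

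\textbf{Step 3: the expectation.} Write $s=\sqrt{Rm}\ge1$ and $L=\LIS{Y_1,\ldots,Y_m}$. Then
\[
\expec{L}\le es+\int_{es}^{\infty}\prob{L\ge t}\,\mathrm dt .
\]
Substituting $t=(1+\delta)es$ and using the bound $(1+\delta)^{-2\ell}$ with $\ell\ge(1+\delta)es$ bounds the integral by
\[
es\int_0^\infty (1+\delta)^{-2(1+\delta)es}\,\mathrm d\delta \le es\int_0^\infty e^{-2es\log(1+\delta)}\,\mathrm d\delta .
\]
Since $2es\ge 2e>1$, this integral equals $1/(2es-1)$. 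The extra contribution is therefore $es/(2es-1)\le 1$, which gives $\expec{L}\le es+1\le 2es$ because $s\ge1$.

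The only delicate point is the constant bookkeeping in Step 3. One must keep the full exponent $2\ell$ rather than the weakened $\sqrt{Rm}$, since the weaker tail $(1+\delta)^{-s}$ is not integrable in $\delta$ when $s\le 1$. This is also where the hypothesis $R\ge1$ (hence $s\ge1$) is used.
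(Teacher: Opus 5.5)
Your proof is correct and follows the paper's argument. Both use the same bound $R^\ell/\ell!$ for a fixed index set, a union bound over the $\binom{m}{\ell}$ index sets, and a Stirling-type estimate giving $(e^2Rm/\ell^2)^\ell$. The only difference is in the expectation step: the paper keeps Stirling's $1/(2\pi\ell)$ factor and sums it over $\ell\ge e\sqrt{Rm}$, while you integrate the tail $(1+\delta)^{-2\ell}$ in $\delta$, which yields the slightly cleaner bound $e\sqrt{Rm}+1$.
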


\begin{proof}
    For any $1\le \ell\le m$ and any subsequence $i_1 < \ldots < i_\ell$ of indices,
    \begin{equation*}
        \prob{Y_{i_1} < \ldots < Y_{i_\ell}}
        = \int_{y_1< \ldots< y_\ell} \prod_{1\le k\le \ell} \big( g_{i_k}(y_k) \mathrm dy_k \big)
        \le R^\ell \int_{y_1< \ldots< y_\ell} \prod_{1\le k\le \ell} \big( g(y_k) \mathrm dy_k \big)
        = \frac{R^\ell}{\ell!} \,.
    \end{equation*}
    Thus, a union bound and a Stirling bound lead to
    \begin{equation*}
        \prob{\LIS{Y_1, \ldots, Y_m} \ge \ell}
        \le \binom{m}{\ell} \frac{R^\ell}{\ell!}
        \le \frac{(mR)^\ell}{(\ell!)^2}
        \le \left( \frac{e^2Rm}{\ell^2} \right)^\ell
        \frac{1}{2\pi\ell} \,.
    \end{equation*}
    From there, the claimed probability bound is straightforward.
    Finally:
    \begin{equation*}
        \expec{\LIS{Y_1, \ldots, Y_m}}
        \le e\sqrt{Rm} + \sum_{\ell= \lceil e\sqrt{Rm}\rceil}^m \prob{\LIS{Y_1, \ldots, Y_m} \ge \ell}
        \le e\sqrt{Rm} + \sum_{\ell= \lceil e\sqrt{Rm}\rceil}^m \frac{1}{2\pi\ell} \,,
    \end{equation*}
    and the conclusion easily follows.
\end{proof}

\begin{figure}
    \centering
    \includegraphics[width=0.4\linewidth]{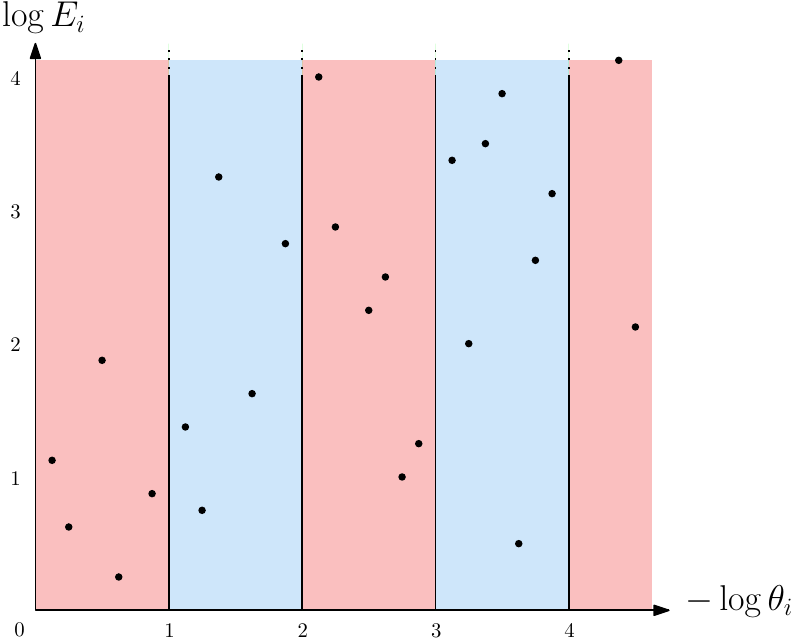}
    \qquad\qquad
    \includegraphics[width=0.4\linewidth]{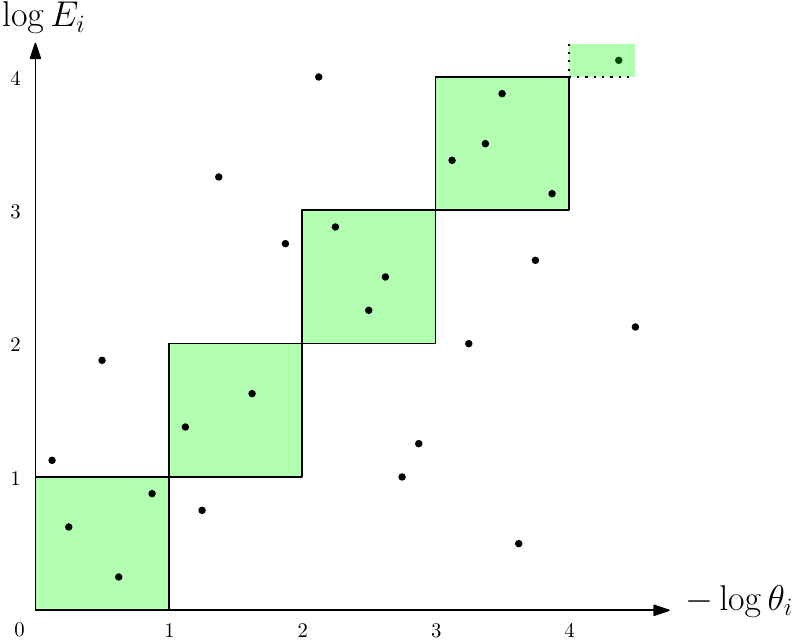}
    \caption{
    Illustration of the proof of \Cref{th: LIS growth rate - general}.
    The points $(\theta_i,E_i)$ are represented in log-scale.
    Left: 
    Lemma~\ref{lem: upper bound on density gives upper bound on LIS} provides an upper bound for the LIS on each band $[k,k+1) \times [0,\infty)$.
    Right:
    Proposition~\ref{prop: superuniformity property} provides a lower bound for the LIS on each square $[k,k+1)^2$.
    }
    \label{fig: growth rate bounds}
\end{figure}

\begin{proof}[Proof of \Cref{th: LIS growth rate - general}]
To ease notation, we may omit the dependence on $n$.
See \Cref{fig: growth rate bounds} for an illustration of this proof.\\
Let $E_1, \ldots, E_n$ be a sequence of independent random variables, where $E_i$ follows the $\Exp{\theta_i}$ distribution.
Then, by Lemma~\ref{lem: exponential representation}, $\LIS{\tau_n}$ is distributed like $\LIS{E_1, \ldots, E_n}$.

\medskip\noindent\textbf{Upper bound.}
Recall that for any $k\in\bZ$ and $i\in B_k$, $e^{-k-1} \le \theta_i \le e^{-k}$.
Therefore, the density of $E_i$ is bounded by $x\mapsto %e^{-k} e^{-e^{-k-1} x} = 
R g_k(x)$ with $R := e$ and $g_k$ the density of the $\Exp{e^{-k-1}}$ distribution.
Hence, by Lemma~\ref{lem: upper bound on density gives upper bound on LIS}:
\begin{equation*}
    \expec{\LIS{E_i ,\, i\in B_k}} \le 2e^{3/2} \sqrt{m_k} \,,
\end{equation*}
and we conclude by noting that 
\begin{equation*}
    \LIS{E_1, \ldots, E_n} \le \sum_{k\in\bZ} \LIS{E_i ,\, i\in B_k} \,.
\end{equation*}

\medskip\noindent\textbf{Lower bound.}
For $k\in\bZ$, set $J_k := [e^k, e^{k+1})$.
We can write:
\begin{equation}\label{eq: lower bound slicing}
    \LIS{E_1, \ldots, E_n} \ge \sum_{k\in\bZ} \LIS{\left\{ E_i : i\in B_k , E_i \in J_k \right\}} \,.
\end{equation}
Indeed, this follows from two observations.
Consider $k<k'$.
First, for any $i\in B_k$ and $j\in B_{k'}$, the monotonicity of the weights implies that $i<j$.
Second, if $E_i \in J_k$ and $E_j \in J_{k'}$, then $E_i < E_j$.
These two observations allow one to concatenate the increasing subsequences of each set $\left\{ E_i : i\in B_k , E_i \in J_k \right\}$ into an increasing subsequence of $\{ E_1, \ldots, E_n \}$.

Now, for each $k\in\bZ$, consider the random variables
\begin{equation*}
    R_k := \left\{ i\in B_k : E_i \in J_k \right\}
    \quad\text{and}\quad
    r_k := \#{R_k} \,.
\end{equation*}
For $i\in B_k$, conditionally on the event $i\in R_k$, the random variable $E_{i}$ follows the $\Exp{\theta_i}$ distribution conditioned to be in the interval $J_k$.
Therefore, Proposition~\ref{prop: superuniformity property} implies that
\begin{equation*}
    \expecond{\LIS{\left\{ E_i , i\in R_k \right\}}}{R_k}
    \ge \expecond{\LIS{\sigma_{r_k}}}{R_k}
    = \ell_{r_k} \,,
\end{equation*}
where $\sigma_{r_k}$ denotes a uniformly random permutation of size $r_k$, and we use the notation of \Cref{th: LIS uniform 2sqrt n}.
Using that $\ell_r \ge \sqrt{r}$ for all $r\in\bN_0$, we get that
\begin{equation}\label{eq: lower bound superuniform for conditioned exponentials}
    \expec{\LIS{\left\{ E_i , i\in R_k \right\}}}
    \ge \expec{\sqrt{r_k}} \,.
\end{equation}
Now, for $i\in B_k$, the probability that $E_i\in J_k$ is $e^{-\theta_i e^k} - e^{-e \theta_i e^k}$.
Using that $e^{-k-1} \le \theta_i \le e^{-k}$, we get that this probability is bounded below by a positive constant $p > 0$.
Therefore, letting $S_{m_k,p}$ denote a $\Binomial{m_k}{p}$ random variable:
\begin{equation}\label{eq: lower bound binomial}
    \expec{\sqrt{r_k}}
    \ge \expec{\sqrt{S_{m_k, p}}}
\end{equation}
In this context, it is easy to see that $\expec{\sqrt{S_{m,p}}} \sim \sqrt{mp}$ as $m\to\infty$, and thus, there exists a constant $c>0$ such that $\expec{\sqrt{S_{m,p}}} \ge c\sqrt{m}$ for all $m\in\bN_0$.
Combining this with \Cref{eq: lower bound slicing,eq: lower bound superuniform for conditioned exponentials,eq: lower bound binomial}, we finally get
\begin{equation*}
    \expec{\LIS{E_1, \ldots, E_n}}
    \ge c \sum_{k\in\bZ} \sqrt{m_k}
\end{equation*}
as desired.
\end{proof}

\subsection{A concentration inequality}
\label{sec: concentration}

In this short section, we explain why it is not restrictive to state \Cref{th: LIS growth rate - general} for the expectation of ${\LIS{\tau_n}}$, rather than for its tail probabilities.
Let us recall McDiarmid's inequality:

\begin{theorem}[\cite{McDiarmid_1989}]\label{th: McDiarmid general}
    Let $n\in\bN$, $Z_1, \dots, Z_n$ be independent random variables with values in a common space $\mathcal{Z}$, and $h : \mathcal{Z}^n \to \bR$ be a function satisfying the bounded differences property: 
    for all $i\in[n]$ and $z_1, \dots, z_n, w_i \in \mathcal{Z}$:
    \[
    \abs{ h(z_1, \dots, z_n) - h(z_1, \dots, w_i, \dots, z_n) } \le c
    \]
    for some constant $c>0$.
    Then, for any given $\lambda>0$:
    \[
    \prob{ \abs{\big. h(Z_1, \dots, Z_n) - \expec{h(Z_1, \dots, Z_n)}} > \lambda } \le 2\exp\left( \frac{-2\lambda^2}{nc^2} \right) \,.
    \]
\end{theorem}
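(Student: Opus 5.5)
This is McDiarmid's bounded differences inequality, which the paper cites from \cite{McDiarmid_1989}. The plan is the standard martingale argument: a Doob martingale, the Hoeffding lemma for its increments, and a Chernoff bound. Write $X := h(Z_1,\dots,Z_n)$ and let $\mathcal F_i := \sigma(Z_1,\dots,Z_i)$. Set $M_i := \expecond{X}{\mathcal F_i}$, so that $M_0 = \expec{X}$ and $M_n = X$. The goal is to control each increment $D_i := M_i - M_{i-1}$. Throughout, assume implicitly that $h$ is measurable and bounded; boundedness follows from the bounded differences property, at least up to a constant shift.

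The key step is showing that, conditionally on $\mathcal F_{i-1}$, each $D_i$ lies in an interval of length at most $c$. By independence, $M_i = g_i(Z_1,\dots,Z_i)$ with
\[
g_i(z_1,\dots,z_i) := \expec{h(z_1,\dots,z_i,Z_{i+1},\dots,Z_n)},
\]
and $M_{i-1} = \expec{g_i(z_1,\dots,z_{i-1},Z_i)}$ evaluated at $z_j = Z_j$. Hence $D_i$ takes values in $[L_i, U_i]$, where
\[
L_i := \inf_w g_i(Z_1,\dots,Z_{i-1},w) - M_{i-1}, \qquad U_i := \sup_w g_i(Z_1,\dots,Z_{i-1},w) - M_{i-1}.
\]
Both endpoints are $\mathcal F_{i-1}$-measurable. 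Moreover $U_i - L_i \le c$, since changing a single coordinate moves $h$ by at most $c$ pointwise, and therefore moves $g_i$ by at most $c$. This is exactly where independence is used: it guarantees that the conditional law of $(Z_{i+1},\dots,Z_n)$ does not depend on $Z_i$.

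Next, apply Hoeffding's lemma conditionally. A centered variable supported in an interval of length $c$ satisfies $\expec{e^{sD}} \le e^{s^2c^2/8}$. Since $\expecond{D_i}{\mathcal F_{i-1}} = 0$, this gives
\[
\expecond{e^{sD_i}}{\mathcal F_{i-1}} \le e^{s^2c^2/8}.
\]
Iterating via the tower property yields $\expec{e^{s(X-\expec{X})}} \le e^{ns^2c^2/8}$. Markov's inequality then gives
\[
\prob{X - \expec{X} > \lambda} \le \exp\left(-s\lambda + \frac{ns^2c^2}{8}\right).
\]
Optimizing at $s = 4\lambda/(nc^2)$ gives the bound $\exp(-2\lambda^2/(nc^2))$. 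Applying the same argument to $-h$ and taking a union bound over the two tails produces the factor $2$.

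I expect the main obstacle to be the measure-theoretic care in the key step. Specifically, one must justify that $L_i$ and $U_i$, defined via an infimum and supremum over $w$, are measurable, or else replace them by essential bounds. One must also check that $U_i - L_i \le c$ holds pointwise rather than merely on average. The Hoeffding lemma itself, proved via convexity of $e^{sx}$ and a second-order bound on the log-moment generating function, is routine.
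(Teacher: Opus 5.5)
Your proposal is correct, and it is the standard Doob-martingale, Hoeffding-lemma and Chernoff argument from \cite{McDiarmid_1989}; the paper only cites that reference and does not reproduce the proof. The measurability issue you flag does not arise: by independence and Fubini, $\expecond{e^{sD_i}}{\mathcal F_{i-1}}$ is the function $(z_1,\dots,z_{i-1})\mapsto \expec{e^{s(g_i(z_1,\dots,z_{i-1},Z_i)-\expec{g_i(z_1,\dots,z_{i-1},Z_i)})}}$ evaluated at $(Z_1,\dots,Z_{i-1})$, so you may apply Hoeffding's lemma separately for each fixed $(z_1,\dots,z_{i-1})$, where $w\mapsto g_i(z_1,\dots,z_{i-1},w)$ takes values in a deterministic interval of length at most $c$.
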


This inequality is readily applicable to the study of LIS of random permutations; see, e.g., \cite{Dubach_2023} for the case of permuton samples.
The Luce distribution is no exception to this:

\begin{corollary}\label{cor: McDiarmid for Luce}
    Let $n\in\bN$ and $\tau_n$ be a Luce random permutation with positive weights $\theta_1, \dots, \theta_n$.
    Then, for all $\lambda>0$:
    \[
    \prob{ \abs{\big. \LIS{\tau_n} - \expec{\LIS{\tau_n}}} > \lambda } \le 2\exp\left( \frac{-2\lambda^2}{n} \right) \,.
    \]
\end{corollary}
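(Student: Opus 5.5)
The plan is to apply McDiarmid's inequality (\Cref{th: McDiarmid general}) to a representation of $\LIS{\tau_n}$ as a function of $n$ independent random variables. By the exponential representation (Lemma~\ref{lem: exponential representation}), $\tau_n$ has the same law as $\Perm{E_1, \dots, E_n}$, where the $E_i$ are independent with $E_i \sim \Exp{\theta_i}$. Hence $\LIS{\tau_n}$ is distributed as $h(E_1, \dots, E_n)$, where we take $\mathcal{Z} = (0,\infty)$ and set $h(z_1,\dots,z_n) := \LIS{z_1, \dots, z_n}$.

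There is a small technicality: the LIS is only defined here for pairwise distinct values. We extend $h$ to all of $\mathcal{Z}^n$ by letting $h(z)$ be the length of the longest \emph{strictly} increasing subsequence of $(z_1,\dots,z_n)$. Since the $E_i$ are a.s.\ pairwise distinct, $h(E_1,\dots,E_n)$ is a.s.\ equal to $\LIS{\Perm{E_1,\dots,E_n}}$. Both the probability and the expectation in the statement are therefore unchanged by this extension.

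The key step is the bounded differences property with $c=1$. Fix $z \in \mathcal{Z}^n$, an index $i$, and a value $w_i$, and let $z'$ be $z$ with its $i$-th coordinate replaced by $w_i$.
\begin{itemize}
\item Take a longest strictly increasing subsequence of $z$. Deleting index $i$ from it, if present, gives a strictly increasing subsequence of $z'$ of length at least $h(z)-1$. Hence $h(z') \ge h(z)-1$.
\item By symmetry, $h(z) \ge h(z')-1$.
\end{itemize}
So $\abs{h(z)-h(z')}\le 1$.

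Applying \Cref{th: McDiarmid general} with $c=1$ then gives exactly
\[
\prob{\abs{\LIS{\tau_n}-\expec{\LIS{\tau_n}}}>\lambda}\le 2\exp(-2\lambda^2/n).
\]
I expect no real obstacle in this argument; the only care needed is the handling of ties just described, and the observation that the independence required by McDiarmid's inequality is supplied by the exponential representation even though the $E_i$ are not identically distributed.
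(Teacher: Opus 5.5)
Your proposal is correct and is essentially the paper's proof: use the exponential representation (Lemma~\ref{lem: exponential representation}) to write $\LIS{\tau_n}$ as a function of independent $E_i\sim\Exp{\theta_i}$, then apply McDiarmid's inequality with bounded-difference constant $c=1$. Your extension of $h$ to tied inputs (longest strictly increasing subsequence) is a harmless refinement that the paper leaves implicit.
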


\begin{proof}
    Using Lemma~\ref{lem: exponential representation}, we see that $\LIS{\tau_n}$ is distributed like $\LIS{E_1, \dots, E_n}$ where $E_i$, $i\in[n]$ are independent random variables with distributions $\Exp{\theta_i}$.
    McDiarmid's inequality (\Cref{th: McDiarmid general}) readily applies with the function $\mathrm{LIS}$, which changes by at most one when changing one of its coordinates.
\end{proof}

This allows us to deduce tail probabilities from estimates on $\expec{\LIS{\tau_n}}$.
In the context of \Cref{th: LIS growth rate - general}, one simple consequence is the following.

\begin{corollary}\label{cor: LIS is close to expectation if non-increasing weights}
    For each $n\in\bN$, let $\bth^{(n)} = (\theta_1 \ge \dots \ge \theta_n)$ be a non-increasing sequence of positive weights and let $\tau_n$ be a $\Luce{\bth^{(n)}}$ random permutation.
    %Assume that $\expec{\LIS{\tau_n}} = \Omega(\sqrt n)$ as $n\to\infty$.
    Then,
    \[
    \lim_{\lambda \to \infty} \liminf_{n\to\infty} \prob{ \LIS{\tau_n} \in \left[ \frac1\lambda \expec{\LIS{\tau_n}} , \lambda \expec{\LIS{\tau_n}} \right] } = 1 \,.
    \]
\end{corollary}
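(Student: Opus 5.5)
The plan is to combine the concentration bound of Corollary~\ref{cor: McDiarmid for Luce} with the lower bound on the expectation given by Corollary~\ref{cor: Luce LIS is at least 2 sqrt n}. Write $\ell_n := \expec{\LIS{\tau_n}}$. The concentration scale in McDiarmid's inequality is $\sqrt n$, so it suffices to know that $\ell_n$ is itself at least of order $\sqrt n$.

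First I will establish $\ell_n \ge c\sqrt n$ for all large $n$ and some absolute $c>0$. Corollary~\ref{cor: Luce LIS is at least 2 sqrt n} with $\varepsilon=1$ gives $\prob{\LIS{\tau_n} > \sqrt n} \to 1$. Since $\LIS{\tau_n}\ge 0$, we get $\ell_n \ge \sqrt n\,\prob{\LIS{\tau_n}>\sqrt n} \ge \tfrac12\sqrt n$ for $n$ large. Alternatively, one can use the lower bound of Theorem~\ref{th: LIS growth rate - general} together with $\sum_k\sqrt{m_k^{(n)}}\ge\sqrt n$, which holds up to an absolute constant. Either route works.

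Next, fix $\lambda\ge 2$. Then the event $\LIS{\tau_n}\notin[\ell_n/\lambda,\lambda\ell_n]$ is contained in $\{|\LIS{\tau_n}-\ell_n| > (1-1/\lambda)\ell_n\}$, because $\lambda\ell_n-\ell_n \ge \ell_n - \ell_n/\lambda$. Since $(1-1/\lambda)\ell_n \ge \tfrac12\ell_n \ge \tfrac14\sqrt n$, Corollary~\ref{cor: McDiarmid for Luce} bounds the probability of this event by $2\exp(-2\cdot n/16 \,/\, n) = 2e^{-1/8}$. This bound does not tend to $0$, so it is not sufficient, and I need a refinement.

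The main obstacle is this refinement. The window $[\ell_n/\lambda,\lambda\ell_n]$ must capture deviations of size $\gtrsim \lambda\sqrt n$ on the upper side, and the lower side must be handled separately.

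\textbf{Upper side.} We have $\lambda\ell_n-\ell_n \ge (\lambda-1)\tfrac12\sqrt n$, so McDiarmid gives the bound $2\exp(-(\lambda-1)^2/2)$, which tends to $0$ as $\lambda\to\infty$ uniformly in $n$.

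\textbf{Lower side.} I use Corollary~\ref{cor: Luce LIS is at least 2 sqrt n} directly instead of concentration. It suffices to show that $\ell_n/\lambda \le \sqrt n$ eventually, because then $\prob{\LIS{\tau_n}<\ell_n/\lambda}\le\prob{\LIS{\tau_n}\le\sqrt n}\to0$. This inequality may fail if $\ell_n \gg \sqrt n$, so in that case I split. If $\ell_n \le \lambda\sqrt n$, the previous argument applies. If $\ell_n > \lambda\sqrt n$, then $\ell_n - \ell_n/\lambda \ge \tfrac12\ell_n \ge \tfrac{\lambda}{2}\sqrt n$, and McDiarmid gives the bound $2\exp(-\lambda^2/2)$.

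In all cases, $\limsup_n$ of the probability of the complement is at most $2\exp(-(\lambda-1)^2/2)+2\exp(-\lambda^2/2)$, which tends to $0$ as $\lambda\to\infty$. This yields the claim.
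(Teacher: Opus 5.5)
Your proposal is correct and follows essentially the paper's argument: McDiarmid (Corollary~\ref{cor: McDiarmid for Luce}) handles the upper tail using $\expec{\LIS{\tau_n}}\gtrsim\sqrt n$, and the lower tail is split according to whether $\expec{\LIS{\tau_n}}$ exceeds $\lambda\sqrt n$ (McDiarmid) or not (Corollary~\ref{cor: Luce LIS is at least 2 sqrt n}). The only difference is that you derive $\expec{\LIS{\tau_n}}\ge\frac12\sqrt n$ for large $n$ from Corollary~\ref{cor: Luce LIS is at least 2 sqrt n}, whereas the paper obtains $\expec{\LIS{\tau_n}}\ge\sqrt n$ for all $n$ directly from Proposition~\ref{prop: superuniformity property}; this only affects constants, and your first, discarded McDiarmid attempt is superfluous.
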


\begin{proof}
    Recall that, by Proposition~\ref{prop: superuniformity property} and \Cref{th: LIS uniform 2sqrt n}, $\expec{\LIS{\tau_n}} \ge \ell_n \ge \sqrt n$ holds for all $n\in\bN$.
    On the one hand, for $\lambda > 1$:
    \[
    \prob{\LIS{\tau_n} > \lambda \expec{\LIS{\tau_n}}}
    \le \prob{ \abs{\LIS{\tau_n} - \expec{\LIS{\tau_n}}} > (\lambda-1) \expec{\LIS{\tau_n}} }
    \le 2\exp\left( -2(\lambda-1)^2 \right)
    \]
    by Corollary~\ref{cor: McDiarmid for Luce}.
    This proves the upper tail decay\footnote{
    The upper tail decay can also be derived as an elementary consequence of Markov's inequality, but our proof provides an \emph{exponential} decay.
    }.
    Similarly, for $\lambda>2$:
    \[
    \prob{\LIS{\tau_n} < \frac1\lambda \expec{\LIS{\tau_n}}}
    \le \prob{ \abs{\big. \LIS{\tau_n} - \expec{\LIS{\tau_n}}} > \frac12 \expec{\LIS{\tau_n}} }
    \le 2\exp\left( -\frac{\expec{\LIS{\tau_n}}^2}{2n} \right) \,.
    \]
    Now, let us distinguish two cases.
    First, assume that $\expec{\LIS{\tau_n}} > \lambda \sqrt n$.
    In that case, the previous bound becomes:
    \[
    \prob{\LIS{\tau_n} < \frac1\lambda \expec{\LIS{\tau_n}}}
    \le 2\exp\left( -\lambda^2 / 2 \right) \,.
    \]
    Second, assume that $\expec{\LIS{\tau_n}} \le \lambda \sqrt n$.
    Then,
    \[
    \prob{\LIS{\tau_n} < \frac1\lambda \expec{\LIS{\tau_n}}}
    \le \prob{\LIS{\tau_n} < \sqrt n} \,,
    \]
    where the above goes to $0$ as $n\to\infty$ by Corollary~\ref{cor: Luce LIS is at least 2 sqrt n}.
    The above two bounds prove the lower tail decay, concluding the proof.
\end{proof}

\begin{remark}
    One could use Talagrand's inequality to get sharper tail bounds; see, e.g., \cite{Tal95} or \cite[Theorem~2.29]{janson_graphs}.
    We leave such details to the reader.
\end{remark}

\section{Computing the constant in the $\sqrt n$ regime}
\label{sec: the constant}

\subsection{The continuous Luce model}
\label{sec: two models}

Let us discuss here what we may call the ``continuous'' Luce model, in opposition with the ``discrete'' Luce model.

Recall (\Cref{th: Luce limit permuton}) that if $\bth^{(n)} = (\theta_1^{(n)}, \ldots, \theta_n^{(n)})$ are weights that admit a finite, positive limit $f$ on $[0,1)$ in the right sense, then Luce permutations with weights $\bth^{(n)}$ admit a deterministic permuton limit $\mu_f$.
Further, we can reformulate \cref{eq: Luce pattern densities} as follows (see, also, the remark after \cite[Proposition~2.2]{Borga_Chatterjee_Diaconis_2025}).

\begin{lemma}\label{lem: permuton sample is Luce with random weights}
    Let $\mu_f$ be a Luce permuton as in \Cref{th: Luce limit permuton}.
    Then, for all $n\in\bN$, the random permutation $\sigma_{n,\mu_f}$ sampled from $\mu_f$ follows the Luce distribution with weights $f(U_{(1)}), \ldots, f(U_{(n)})$, where $U_{(1)} < \ldots < U_{(n)}$ denotes the increasing reordering of i.i.d.\ uniform r.v.'s on $[0,1]$.
    That is:
    \begin{equation*}\label{eq: permuton sample is Luce with random weights}
        \prob{\sigma_{n,\mu_f} = \sigma}
        = \expec{ \probcond{\tau_n^{\bU} = \sigma}{U_1, \ldots, U_n} } \,,
    \end{equation*}
    for any $\sigma\in\fS_n$,
    where we let $\tau_n^{\bU}$ denote a Luce random permutation with weights $f(U_{(1)}), \ldots, f(U_{(n)})$.
\end{lemma}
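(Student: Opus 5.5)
The natural route is to work directly from the explicit description of $\mu_f$ in \Cref{th: Luce limit permuton}: a $\mu_f$-distributed point is $(U, F(E/f(U)))$ with $U\sim\Unif{[0,1]}$ and $E\sim\Exp{1}$ independent. Take i.i.d.\ pairs $(U_i,E_i)_{1\le i\le n}$ and set $V_i := F(E_i/f(U_i))$. Then $\big((U_i,V_i)\big)_{i\le n}$ are i.i.d.\ $\mu_f$-distributed, and by definition $\sigma_{n,\mu_f}$ has the law of $\Perm{V_{(1)},\ldots,V_{(n)}}$. Here $V_{(i)}$ is the $V$-value attached to the point with $i$-th smallest $U$-coordinate, which is a.s.\ well defined since $U$ has a continuous law.

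First, the function $F$ of \eqref{eq: function F} is strictly increasing on $[0,\infty)$, because $f>0$ makes $x\mapsto e^{-xf(t)}$ strictly decreasing for every $t$. Since $\Perm{\cdot}$ depends only on relative order, we get
\[
\Perm{V_{(1)},\ldots,V_{(n)}} = \Perm{E_{(1)}/f(U_{(1)}),\ldots,E_{(n)}/f(U_{(n)})},
\]
where $E_{(i)}$ is the exponential variable paired with $U_{(i)}$.

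Second, condition on $(U_1,\ldots,U_n)$. The sorting permutation is a function of the $U$'s alone, and the $E$'s are i.i.d.\ and independent of the $U$'s. Hence, conditionally, $(E_{(1)},\ldots,E_{(n)})$ is again i.i.d.\ $\Exp{1}$. It follows that $E_{(i)}/f(U_{(i)})$ are conditionally independent exponentials with parameters $f(U_{(i)})$. Lemma~\ref{lem: exponential representation} then shows that, conditionally on $\bU$, the induced permutation is $\Luce{f(U_{(1)}),\ldots,f(U_{(n)})}$-distributed, i.e.\ it has the conditional law of $\tau_n^{\bU}$. Taking expectations over $\bU$ gives the displayed identity.

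No step is a genuine obstacle. The only point needing care is the conditional independence of the reordered $E$'s, which holds because the reordering is measurable with respect to the $U$'s; ties and the boundary values where $f$ or $F$ might misbehave occur with probability zero. As a consistency check, one could also derive the identity from \eqref{eq: Luce pattern densities}, since $k!\,\mathrm du_1\cdots\mathrm du_k$ on the simplex is exactly the law of $(U_{(1)},\ldots,U_{(k)})$. That route, however, relies on $\mu_f$ being characterized by these pattern densities rather than on its explicit construction.
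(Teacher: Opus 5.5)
Your proof is correct, but it takes a different route from the paper. The paper gives no separate argument. It presents the lemma as a direct reformulation of the pattern-density formula \eqref{eq: Luce pattern densities} of \Cref{th: Luce limit permuton}, taken with $k=n$. The only input is the observation you mention as a consistency check: $n!\,\mathrm du_1\cdots\mathrm du_n$ restricted to $\{0<u_1<\cdots<u_n<1\}$ is the joint law of $(U_{(1)},\ldots,U_{(n)})$. Incidentally, that route does not need the fact that pattern densities characterize $\mu_f$. The formula already computes $\prob{\sigma_{n,\mu_f}=\sigma}$ for every $\sigma\in\fS_n$, which is exactly the claim.

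You instead work from the explicit description of $\mu_f$ as the law of $(U,F(E/f(U)))$. You combine the strict monotonicity of $F$, the conditional i.i.d.\ property of the $E$'s reordered according to the $U$'s, and Lemma~\ref{lem: exponential representation}. In effect, this is the proof of Lemma~\ref{lem: discrete Luce is permuton sample with fixed x-coordinates}, averaged over random abscissae.

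The paper's version is a one-liner resting on a formula imported from \cite{Borga_Chatterjee_Diaconis_2025}. Yours is self-contained given the construction, and it yields the identity pathwise. On a single probability space, $\sigma_{n,\mu_f}$ coincides almost surely with the permutation induced by the variables $E_{(i)}/f(U_{(i)})$, which are conditionally independent $\Exp{f(U_{(i)})}$. This is the representation the paper relies on later: it builds $\sigma_{n,\mu_f}$ from conditionally exponential variables in the proof of Proposition~\ref{prop: coupling discrete and continuous}, and from $V_i=F(E_i/f(U_i))$ in the proof of Corollary~\ref{cor: bound on LIS north-west band}.
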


Therefore, we may say that $\sigma_{n,\mu_f}$ follows the \emph{continuous Luce distribution with parameter $f$}.
In comparison, we call \emph{discrete Luce distribution with parameter $f$} the Luce distribution with weights $\bth^{(n)}(f) := \big( f\big(\frac{i-1}{n}\big) \big)_{1\le i\le n}$.
The latter choice of weights is quite natural for the Luce model (they include, e.g., the famed Sukhatme weights $\theta_i = (n-i+1)/n$).

\begin{remark}
    We choose here to stick with the ``canonical'' choice of weights $\theta_i^{(n)} = f\big( (i-1)/n \big)$ for convenience, but it seems likely that our results could be extended to allow weights of the form $f\big( x_i^{(n)} \big)$ for ``sufficiently regular'' subdivisions $(x_1^{(n)} < \ldots< x_n^{(n)})$.
\end{remark}

The main goal of this section is to argue that the discrete and continuous Luce models display similar behaviors.
In particular, we show in \Cref{th: LLN with variational constant} that their LIS admit the same limit constant in the $\sqrt n$ regime, allowing us to transfer known results about the LIS of permuton samples (see \cite{Deuschel_Zeitouni_1995}) to the LIS of (discrete) Luce permutations.
As a warm-up, we start by showing that both models share the same growth rate.

\begin{lemma}\label{lem: discrete and continuous models share same groth rate}
    Let $f$ be a positive, finite, non-increasing function on $[0,1)$ {normalized so that $f(0)=1$}, and let $\mu_f$ be the associated permuton of \Cref{th: Luce limit permuton}.
    For each $n\in\bN$, let $\sigma_{n,\mu_f}$ be a $\mu_f$-sample.
    Then:
    \begin{equation*}
        \expec{\LIS{\sigma_{n,\mu_f}}}
        \asymp \psi_f(n) \,,
    \end{equation*}
    where $n\mapsto \psi_f(n)$ is the function of Proposition~\ref{prop: growth rate with phi}.
\end{lemma}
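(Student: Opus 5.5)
By Lemma~\ref{lem: permuton sample is Luce with random weights}, $\sigma_{n,\mu_f}$ is distributed as a Luce permutation with the random weights $f(U_{(1)}) \ge \ldots \ge f(U_{(n)})$. These weights are non-increasing because $f$ is non-increasing. So we condition on $\bU=(U_1,\ldots,U_n)$ and apply \Cref{th: LIS growth rate - general} to the conditional law. Its constants are absolute, so
\[
\expecond{\LIS{\sigma_{n,\mu_f}}}{\bU} \asymp \sum_{k\in\bN_0} \sqrt{M_k}\,,
\qquad M_k := \card{ i\in[n] : U_i \in A_k } \,,
\]
where $A_k$ and $p_k$ are as in Proposition~\ref{prop: growth rate with phi}. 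Only $k\ge0$ occurs because $f\le f(0)=1$. Taking expectations, it remains to show
\[
\expec{\textstyle\sum_{k} \sqrt{M_k}} \asymp \psi_f(n) = \sum_k \min\{np_k,\sqrt{np_k}\}\,.
\]
Here $(M_k)_k$ is multinomial, and each marginal is $M_k \sim \Binomial{n}{p_k}$. It therefore suffices to prove termwise that $\expec{\sqrt{S_{n,p}}} \asymp \min\{np,\sqrt{np}\}$ for $S_{n,p}\sim\Binomial{n}{p}$, uniformly over $n\in\bN$ and $p\in[0,1]$, and then sum over $k$. This step needs no monotonicity assumption on $(p_k)$.

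For the upper bound, Jensen gives $\expec{\sqrt{S}} \le \sqrt{np}$. Also $\sqrt{S}\le S$ because $S$ is integer-valued, so $\expec{\sqrt{S}} \le np$. Hence $\expec{\sqrt S}\le\min\{np,\sqrt{np}\}$.

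For the lower bound, split into two cases according to the size of $np$.
\begin{itemize}[nosep]
\item If $np\le 1$, use $\sqrt{S}\ge \One{S\ge1}$. Then $\expec{\sqrt S}\ge \prob{S\ge1} = 1-(1-p)^n \ge 1-e^{-np} \ge (1-e^{-1})\,np$.
\item If $np\ge1$, use Paley--Zygmund or Chebyshev. Since $\mathrm{Var}(S)\le np$, we get $\prob{S\ge np/4}\ge c$ for an absolute $c>0$; the case of small $n$ is handled directly by the same indicator bound. Therefore $\expec{\sqrt S}\ge c\sqrt{np}/2$.
\end{itemize}
Alternatively, one could write $\expec{\sqrt S}\ge \expec{S}/\sqrt{\expec{S^2}}\cdot\sqrt{\expec S}$-type inequalities. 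The cleanest route is H\"older: $\expec{S}\le \expec{\sqrt S}^{2/3}\expec{S^2}^{1/3}$, which gives $\expec{\sqrt S}\ge (np)^{3/2}/(np+(np)^2)^{1/2}$. This is $\asymp \min\{np,\sqrt{np}\}$ in both regimes at once.

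The only delicate point is that the implicit constants be uniform in $n$ and $p$. This holds because \Cref{th: LIS growth rate - general} has absolute constants valid for any non-increasing weight sequence, including random ones, and the binomial estimates above are uniform. Summing over $k$ then concludes the proof.
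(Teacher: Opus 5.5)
Your proposal is correct and follows the paper's proof essentially verbatim: you realize $\sigma_{n,\mu_f}$ as a Luce permutation with the random non-increasing weights $f(U_{(i)})$ via Lemma~\ref{lem: permuton sample is Luce with random weights}, apply \Cref{th: LIS growth rate - general} conditionally on $\bU$ (its constants are absolute), note that each count is $\Binomial{n}{p_k}$, and conclude with the square-root-of-binomial estimate. The only difference is that you reprove that estimate, which the paper states as Lemma~\ref{lem: sqrt of binomial} and proves with a Chebyshev split at $np=3$; your H\"older bound $\expec{\sqrt S}\ge (np)^{3/2}/(np+(np)^2)^{1/2}$ handles both regimes in one line.
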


\begin{proof}
    Let $\bU := (U_1, \ldots, U_n)$ be a sequence of i.i.d.\ uniform random variables on $[0,1]$, and let $U_{(1)} < \ldots < U_{(n)}$ denote their (a.s.\ unique) increasing reordering.
    Conditionally given these, let $\tau_n^\bU$ denote a Luce random permutation with weights $\big( f\big( U_{(i)} \big) \big)_{1\le i\le n}$, so that the unconditional law of $\tau_n^\bU$ is $\sample_n(\mu_f)$ by Lemma~\ref{lem: permuton sample is Luce with random weights}.

    We can apply \Cref{th: LIS growth rate - general} to $\tau_n^\bU$, conditionally given $\bU$, and then take expectation to get that
    \[
        \expec{ \expecond{\LIS{\tau_n^\bU}}{\bU} }
        \asymp \expec{ \sum_{k\in\bN_0} \sqrt{m_k^{(n)}(\bU)} } \,,
    \]
    where 
    \[
        m_k^{(n)}(\bU) := \card{ i\in[n]: k \le -\log f(U_{(i)}) < k+1 } \,.
    \]
    With the notation of Proposition~\ref{prop: growth rate with phi}, we see that the unconditional law of $m_k^{(n)}(\bU)$ is the binomial distribution $\Binomial{n}{p_k}$.
    The conclusion follows from Lemma~\ref{lem: sqrt of binomial}.
\end{proof}

\begin{remark}
    If $f$ is $\log$-concave, then this coincides with the growth rate of $\LIS{\tau_n}$ where $\tau_n$ follows the $\Luce{\bth^{(n)}(f)}$ distribution (i.e., the \enquote{discrete} model).
\end{remark}

\begin{example}
    Despite our philosophy here being that Luce random permutations $\tau_n$ should behave ``almost like'' samples $\sigma_{n,\mu_f}$ of their permuton limit, we should stress that this crucially requires some regularity on the sequence of weights.
    As an example, choose $f: x\in[0,1] \mapsto 1-x$, two sequences $(h_n), (w_n)$ such that $1\le h_n\le n$ and $w_n > 0$, and set
    \[
    \theta_i^{(n)} := \begin{cases}
        w_n^{h_n-i+1} &\text{if } 1\le i\le h_n;\\
        \frac{n-i+1}{n} &\text{if } h_n< i\le n.
    \end{cases}
    \]
    If $h_n = o(n)$, then \Cref{th: Luce limit permuton} applies and Luce random permutations $\tau_n$ with weights $\big( \theta_i^{(n)} \big)$ converge in probability towards the permuton $\mu_f$.
    However, this alone tells us very little on the behavior of $\LIS{\tau_n}$.
    Indeed, if $E_1, \ldots, E_n$ denotes a sequence of independent exponential r.v.'s with respective parameters $\theta_i^{(n)}$, then for $i< h_n$:
    \[
        \prob{E_i > E_{i+1}}
        = \frac{\theta_{i+1}^{(n)}}{\theta_i^{(n)} + \theta_{i+1}^{(n)}}
        = \frac{1}{w_n +1} \,,
    \]
    and thus:
    \[
        \prob{E_1 < \ldots < E_{h_n}}
        \ge 1 - \frac{h_n-1}{w_n+1} \,.
    \]
    If we choose our sequences to satisfy $h_n = o(w_n)$, then this goes to $1$.
    Therefore, by Lemma~\ref{lem: exponential representation}, $\LIS{\tau_n}$ is at least $h_n$ with high probability.
    If we choose $h_n := \lfloor n^\beta \rfloor$ for some $\beta \in (1/2, 1)$ and $w_n := n$, then $\LIS{\tau_n}$ is at least $\lfloor n^\beta \rfloor$ with high probability, contrasting with the $\sqrt n$ growth rate for Sukhatme weights (cf Example~\ref{ex: Sukhatme}).
\end{example}

To conclude this section, let us formulate an equivalent description of the comparison between the discrete model and the continuous model.
Although we will not formally need it, we believe it helps better understand the current situation.
Beforehand, we need to introduce some notation.

Let $\mu$ be a permuton with density $\rho$, and assume that $\rho(x,\cdot)$ is a probability density for all $x\in[0,1)$.
This is, e.g., the case for the Luce density \eqref{eq: density of the Luce permuton}, as $\rho_f(x,\cdot)$ is the density of $F(E/f(x))$.
Consider $0\le x_1\le \ldots\le x_n< 1$. 
Then, we let $\sigma_{x_1, \ldots, x_n,\rho}$ denote the random permutation $\Perm{Y_1, \ldots, Y_n}$ induced by a sequence of independent random variables $(Y_i)_{1\le i\le n}$ where $Y_i$ is distributed under the density $\rho(x_i,\cdot)$.
We let $\sample_{x_1, \dots, x_n}(\rho)$ denote the law of $\sigma_{x_1, \ldots, x_n,\rho}$.
With this formalism, observe that the distribution $\sample_n(\mu)$ is the average of $\sample_{U_{(1)}, \dots, U_{(n)}}(\rho)$ where $U_{(1)}< \dots< U_{(n)}$ is the reordering of i.i.d.~uniform coordinates.

\begin{lemma}\label{lem: discrete Luce is permuton sample with fixed x-coordinates}
    Let $f$ be a positive, finite, measurable function on $[0,1)$.
    Let $\mu_f$ be the permuton with density $\rho_f$ given by \cref{eq: density of the Luce permuton}.
    Then, for any $0\le x_1\le \ldots\le x_n< 1$,
    \begin{equation*}
        \sample_{x_1, \ldots, x_n}(\rho_f) = \Luce{\big. f(x_1), \ldots, f(x_n)} \,.
    \end{equation*}
\end{lemma}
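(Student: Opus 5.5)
The plan is to identify, for each fixed $x$, the law with density $\rho_f(x,\cdot)$ as the law of $F(E/f(x))$ with $E\sim\Exp{1}$, and then to use the exponential representation (Lemma~\ref{lem: exponential representation}) together with the fact that $F$ is increasing.

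\textbf{Step 1: the conditional densities.} We check that $y\mapsto\rho_f(x,y)$ is the density of $F(E/f(x))$. Write $Z=E/f(x)$, which has density $f(x)e^{-f(x)z}$ on $[0,\infty)$. Since $f>0$, the function $F(z)=1-\int_0^1 e^{-zf(t)}\,\mathrm dt$ is continuous and strictly increasing from $[0,\infty)$ onto $[0,1)$. Its derivative is
\[
F'(z)=\int_0^1 f(t)e^{-zf(t)}\,\mathrm dt ,
\]
which is finite for $z>0$ by dominated convergence, since $f(t)e^{-zf(t)}\le 1/(ez)$. The change of variables $y=F(z)$ then gives the density of $F(Z)$ at $y\in(0,1)$:
\[
\frac{f(x)e^{-f(x)F^{-1}(y)}}{F'(F^{-1}(y))},
\]
which is exactly \eqref{eq: density of the Luce permuton}. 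This also confirms that $\rho_f(x,\cdot)$ is a probability density for every $x$, so $\sample_{x_1,\ldots,x_n}(\rho_f)$ is well defined. This is the only step requiring care, namely justifying the differentiation and the change of variables when $f$ is merely measurable. If needed, I would avoid derivatives entirely by computing $\prob{F(Z)\le y}=1-e^{-f(x)F^{-1}(y)}$ and differentiating that expression in $y$.

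\textbf{Step 2: realize the sample and compare orderings.} Take independent $E_1,\ldots,E_n\sim\Exp{1}$ and set $Y_i:=F(E_i/f(x_i))$. By Step 1, $(Y_i)_{1\le i\le n}$ has the law required in the definition of $\sigma_{x_1,\ldots,x_n,\rho_f}$. Because $F$ is strictly increasing, $(Y_1,\ldots,Y_n)$ and $(E_1/f(x_1),\ldots,E_n/f(x_n))$ have the same relative order. Hence
\[
\Perm{Y_1,\ldots,Y_n}=\Perm{E_1/f(x_1),\ldots,E_n/f(x_n)}
\]
almost surely, where ties have probability zero because the $E_i$ have continuous independent laws.

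\textbf{Step 3: apply the exponential representation.} Each $E_i/f(x_i)$ is exponential with parameter $f(x_i)$, and these variables are independent. By Lemma~\ref{lem: exponential representation}, the permutation $\Perm{E_1/f(x_1),\ldots,E_n/f(x_n)}$ is $\Luce{f(x_1),\ldots,f(x_n)}$-distributed. Combined with Step 2, this gives $\sample_{x_1,\ldots,x_n}(\rho_f)=\Luce{f(x_1),\ldots,f(x_n)}$.

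The ordering assumption $x_1\le\cdots\le x_n$ plays no role beyond indexing, so the argument applies to any finite sequence of points in $[0,1)$.
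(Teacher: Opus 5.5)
Your proposal is correct and follows the paper's proof: identify $\rho_f(x,\cdot)$ as the density of $F(E/f(x))$, use that $F$ is increasing so the induced permutation equals $\Perm{E_1/f(x_1),\ldots,E_n/f(x_n)}$, and conclude with Lemma~\ref{lem: exponential representation}. The only difference is that you verify the conditional density by a change of variables, whereas the paper takes this fact directly from \Cref{th: Luce limit permuton}.
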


\begin{proof}
    Recall from \Cref{th: Luce limit permuton} that $\rho_f(x,\cdot)$ is the density of the random variable $F(E/f(x))$ where $E\sim\Exp{1}$.
    Note that $E/f(x)$ is an $\Exp{f(x)}$ random variable, and that $F$ is an increasing map.
    Therefore, $\sample_{x_1, \dots, x_n}(\rho_f)$ coincides with the law of the random permutations $\Perm{E_1, \dots, E_n}$ where $E_i \sim \Exp{f(x_i)}$ are independent.
    The conclusion follows by Lemma~\ref{lem: exponential representation}.
\end{proof}

Let us briefly summarize our two comparison results.
On the one hand, Lemma~\ref{lem: permuton sample is Luce with random weights} states the continuous Luce model is like a discrete Luce model \emph{with random weights}.
On the other hand, Lemma~\ref{lem: discrete Luce is permuton sample with fixed x-coordinates} states that the discrete Luce model is like a continuous Luce model (i.e., a permuton sample) \emph{with specified x-coordinates}.
% In other words, while the continuous model $\sample_n(\mu_f)$ is constructed via an \enquote{unrestricted} $\mu$-i.i.d.\ point set, the discrete model $\Luce{\bth^{(n)}(f)}$ is constructed via a $\mu$-i.i.d.\ point set \emph{with specified x-coordinates}.
Both points of view are particularly well-adapted to the use of coupling techniques, although we found it slightly easier to use the first one.
This is illustrated in the next section.

\subsection{Coupling the discrete and the continuous models}
\label{sec: coupling}

In view of Lemma~\ref{lem: permuton sample is Luce with random weights}, one should expect that the discrete model $\Luce{\bth^{(n)}(f)}$ can be coupled with the continuous model $\sample_n(\mu_f)$ in a rather efficient way.
We make this formal in the next result.

\begin{proposition}\label{prop: coupling discrete and continuous}
    Let $f$ be a positive, finite, $\cC^1$ function on $[0,1)$, 
    normalized so that $\sup f \le 1$,
    and let $\mu_f$ be the associated permuton of \Cref{th: Luce limit permuton}.
    Fix $\varepsilon\in(0,1)$.
    Then, there exists a coupling of $\tau_n \sim \Luce{\bth^{(n)}(f)}$ and $\sigma_{n,\mu_f} \sim \sample_n(\mu_f)$ together with a random subset $\cI_\unmatched \subseteq [(1-\varepsilon)n]$ such that:
    \begin{enumerate}[label=(\roman*)]
        \item $ 
        \Perm{ \left\{ \tau_n(i) : i\in[(1-\varepsilon)n] \setminus \cI_\unmatched \right\} }
        = \Perm{ \left\{ \sigma_{n,\mu_f}(i) : i\in[(1-\varepsilon)n] \setminus \cI_\unmatched \right\} }
        $;
        \item $
        \expec{\#{\cI_\unmatched}} = \cO\big( \sqrt n (\log n)^{3/2} \big)
        $;
        \item $
        \max\left\{\expec{ \LIS{ \left\{ \tau_n(i) : i\in\cI_\unmatched \right\} } },
        \expec{ \LIS{ \left\{ \sigma_{n,\mu_f}(i) : i\in\cI_\unmatched \right\} } }\right\}
        = \cO\big( n^{1/4} (\log n)^{3/4}\big)
        $.
    \end{enumerate}
\end{proposition}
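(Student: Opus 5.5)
Via Lemma~\ref{lem: permuton sample is Luce with random weights}, $\sigma_{n,\mu_f}$ is $\Perm{E_1/f(U_{(1)}),\ldots,E_n/f(U_{(n)})}$, and via Lemma~\ref{lem: exponential representation}, $\tau_n$ is $\Perm{E'_1/f(\tfrac{0}{n}),\ldots,E'_n/f(\tfrac{n-1}{n})}$, with i.i.d.\ $\Exp{1}$ variables in both. The plan is to realize both on one probability space, site by site. At site $i\le(1-\varepsilon)n$ we compare the two exponential laws $\Exp{f(U_{(i)})}$ and $\Exp{f((i-1)/n)}$ and couple them maximally, so that they coincide with probability $1-d_{TV}$. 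We then set $\cI_\unmatched$ to be the set of indices where they differ. On the complement the two point sets have identical values at identical positions, so the relative order of $\{\tau_n(i)\}$ and $\{\sigma_{n,\mu_f}(i)\}$ restricted to $[(1-\varepsilon)n]\setminus\cI_\unmatched$ is the same. That gives (i).

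For (ii), we control $|U_{(i)}-(i-1)/n|$. By standard order-statistic (DKW) concentration, the event $G:=\{\max_i|U_{(i)}-i/n|\le C\sqrt{\log n/n}\}$ has probability $1-O(n^{-2})$. On $[0,1-\varepsilon/2]$, $f$ is $\cC^1$ and bounded below by some $c_\varepsilon>0$, hence $\log f$ is Lipschitz there. Also $d_{TV}(\Exp{a},\Exp{b})\le |a-b|/\min(a,b)$. So on $G$, each index in $[(1-\varepsilon)n]$ is unmatched with conditional probability $O(\sqrt{\log n/n})$, and summing gives $\expec{\#\cI_\unmatched}=O(\sqrt n\log^{1/2}n)$. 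The stated bound $\sqrt n(\log n)^{3/2}$ leaves slack. That slack could absorb a cruder argument, for instance a blockwise coupling: use blocks of length $\sqrt n$ and match the counts of $U$'s in each block to the grid, losing $O(\sqrt{n\log n})$ per block boundary in an appropriate sense. The complement $G^c$ contributes $n\cdot O(n^{-2})$, which is negligible.

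Step (iii) is the main obstacle, because $\cI_\unmatched$ is random and correlated with the values. The plan is to condition on $\bU$ and on the set $\cI_\unmatched$. Under maximal coupling, the unmatched parts of the two variables at site $i$ have explicit densities: $(g_a-g_b)_+/d_{TV}$ and $(g_b-g_a)_+/d_{TV}$, with $g_a$ the $\Exp{a}$ density. Conditionally on the set, the values at unmatched sites remain independent, so the key point is to bound these densities by $R\,g$ for a common density $g$ and a constant $R$. If that holds, Lemma~\ref{lem: upper bound on density gives upper bound on LIS} applies. The bound is not automatic: $(g_a-g_b)_+/|a-b|$ is comparable to $|1-a x|e^{-\min(a,b)x}$ up to bounded factors, because $d_{TV}\asymp|a-b|$ when $a,b\ge c_\varepsilon$. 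Since $a,b\in[c_\varepsilon,1]$, this is bounded by $R\,g(x)$ with $g$ the $\Exp{c_\varepsilon/2}$ density and $R$ depending only on $\varepsilon,f$. (If the cruder blockwise coupling were used, one would instead use directly that all weights on $[0,1-\varepsilon/2]$ lie in $[c_\varepsilon,1]$, so every relevant density is dominated by $c_\varepsilon^{-1}$ times an $\Exp{c_\varepsilon}$ density; this is why the restriction to $[(1-\varepsilon)n]$ is made.) Lemma~\ref{lem: upper bound on density gives upper bound on LIS} then gives $\expec{\LIS{\cdot}\mid \#\cI_\unmatched}\le 2e\sqrt{R\,\#\cI_\unmatched}$. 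Jensen's inequality and (ii) then yield $O(n^{1/4}(\log n)^{3/4})$ for both permutations. The same domination holds for the $\tau_n$ side, since it involves the same exponential family.
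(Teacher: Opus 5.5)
Your proof is correct and follows the paper's route: condition on $\bU$, work on the DKW event, couple $\Exp{f(\frac{i-1}{n})}$ with $\Exp{f(U_{(i)})}$ independently site by site for $i\le(1-\varepsilon)n$, dominate the residual densities by $R$ times a fixed density, and finish with Lemma~\ref{lem: upper bound on density gives upper bound on LIS} and Jensen. The only difference is that you use the exact maximal coupling, which is legitimate because $d_{TV}\asymp|a-b|$ for $a,b\in[c_\varepsilon,1]$, instead of the band-subtraction coupling of Lemma~\ref{lem: exponential coupling}. This removes a $\log n$ factor and gives the sharper bounds $\cO(\sqrt{n\log n})$ and $\cO(n^{1/4}(\log n)^{1/4})$. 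You should drop the blockwise fallback: it is unnecessary, and its accounting ($\sqrt n$ blocks, each losing $\sqrt{n\log n}$) would not close.
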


\begin{proof}
    For $i\in[n]$, we write $x_i := \frac{i-1}{n}$.
    We let $(E_i)_{1\le i\le n}$ denote a sequence of independent random variables where $E_i \sim \Exp{f(x_i)}$, so that $\tau_n := \Perm{E_1, \ldots, E_n}$ follows the $\Luce{\bth^{(n)}(f)}$ distribution by Lemma~\ref{lem: exponential representation}.
    
    Let $\bU := \{ U_i : i\in[n] \}$ denote a sequence of i.i.d.\ uniform random variables on $[0,1]$, and let $U_{(1)} < \ldots < U_{(n)}$ denote their increasing reordering.
    Conditionally given $\bU$, let $(T_i)_{1\le i\le n}$ denote a sequence of independent random variables where $T_i \sim \Exp{f\big( U_{(i)} \big)}$, so that the unconditional law of $\sigma_n := \Perm{T_1, \ldots, T_n}$ is $\sample_n(\mu_f)$ by Lemma~\ref{lem: permuton sample is Luce with random weights}.
    Conditionally given $\bU$, we shall use Lemma~\ref{lem: exponential coupling} to construct an appropriate coupling of $(T_i)_{1\le i\le n}$ with $(E_i)_{1\le i\le n}$.
    
    Set $t_n := \sqrt{\log n}$.
    Thanks to the DKW inequality (Corollary~\ref{cor: DKW inequality for uniform variables}), we have that
    \begin{equation}
    \prob{ A_n } \ge 1 - 2e^{-2t_n^2} = 1 - \frac{2}{n^2}
    \quad\text{where}\quad
    A_n := \left\{ \sup_{1\le i\le n} \abs{ U_{(i)} - x_i } \le \frac{t_n}{\sqrt n} \right\} \,.
    \end{equation}
    Observe that for $n$ large enough, under the event $A_n$, if $i\in[n]$ is such that $x_i \in [0,1-\varepsilon]$ then $U_{(i)} \in [0,1-\frac12 \varepsilon]$.
    Let 
    \[ \eta := \min\left\{1/4,\min_{[0,1-\frac{1}{2}\varepsilon]} f \right\} > 0
    \quad\text{and}\quad
    \delta_n := \frac{t_n}{\sqrt n} \max\left\{1,\max_{[0,1-\frac{\varepsilon}{2}]} \abs{f'}\right\} 
    = o(1) \,,
    \]
    and write $\theta_i := f(x_i)$, $\theta_i' := f\big( U_{(i)} \big)$ for $i\in[n]$,
    so that under the event $A_n$, for all $i\in[(1-\varepsilon)n]$, we have $\theta_i, \theta_i' \in [\eta, 1]$ and $\abs{\theta_i - \theta_i'} \le \delta_n$.
    Conditionally given $\bU$ and under the event $A_n$, we can apply Lemma~\ref{lem: exponential coupling} independently to each $i\in[(1-\varepsilon)n]$ to get a coupling of $(E_i)_{1\le i\le n}$ with $(T_i)_{1\le i\le n}$ such that, for all $i\in[(1-\varepsilon)n]$:
    \begin{itemize}
        \item $
        \prob{E_i \ne T_i} \le \frac{2\delta_n}{\eta} \big( 1 + \log \frac{1}{2\delta_n} \big) =: q_n 
        $;
        \item the law of $E_i$ conditionally on the event $\{ E_i \ne T_i \}$ has a density bounded by $Rg_n$, where $R\in(0,\infty)$ is an absolute constant and $g_n$ is a probability density on $[0,\infty)$ that depends only on $n, \varepsilon, f$.
    \end{itemize}
    The random variables $E_i$ and $T_i$ for $i > (1-\varepsilon)n$ are coupled in an arbitrary way, and will not intervene whatsoever.
    Under the complementary event $\neg A_n$, we couple $(E_i)_{1\le i\le n}$ with $(T_i)_{1\le i\le n}$ in an arbitrary way.
    
    We set $\cI_\unmatched := \left\{ i\in[(1-\varepsilon)n] : E_i \ne T_i \right\}$, so that the first claimed item is clear by construction.
    Moreover, under the event $A_n$, the size of $\cI_\unmatched$ is stochastically bounded by the $\Binomial{n}{q_n}$ distribution.
    Under the complementary event $\neg A_n$, we simply bound the size by $n$.
    Therefore:
    \[
        \expec{\#{\cI_\unmatched}}
        \le nq_n + 2n^{-1}
        = \cO\big( \sqrt n (\log n)^{3/2} \big) \,.
    \]
    This proves the second item.
    Finally, we can apply Lemma~\ref{lem: upper bound on density gives upper bound on LIS} to the sequence $(E_i, i\in\cI_\unmatched)$ to get
    \[
        \expec{\expecond{ \LIS{ E_i : i\in\cI_\unmatched} }{\bU, A_n, \cI_\unmatched}}
        \le \expec{2e\sqrt{R \#{\cI_\unmatched}}}
        \le \cO\big( n^{1/4} (\log n)^{3/4}\big) \,,
    \]
    where we used Jensen's inequality.
    It suffices to bound the LIS by $n$ under the event $\neg A_n$ to prove our third item, for $\tau_n$.
    The proof works the same for $\sigma_n$.
\end{proof}

\subsection{Retrieving the limit constant from the continuous model}
\label{sec: proof the constant}

This section is devoted to our second main result, \Cref{th: LLN with variational constant}.
We start with a few preliminary results that will ease its proof.

\begin{lemma}\label{lem: induced Luce}
    Let $(\theta_i)_{1\le i\le n}$ be a sequence of positive weights, and let $\tau_n$ denote a Luce random permutation with weights $\theta_1, \ldots, \theta_n$.
    Then, for any given $k\in[n]$ and $1\le i_1< \ldots< i_k\le n$, the induced subpermutation $\Perm{\tau_n(i_1), \ldots, \tau_n(i_k)}$ follows the Luce distribution with weights $\theta_{i_1}, \ldots, \theta_{i_k}$.
\end{lemma}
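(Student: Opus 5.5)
The natural route is the exponential representation, Lemma~\ref{lem: exponential representation}. Let $E_1, \ldots, E_n$ be independent with $E_i \sim \Exp{\theta_i}$. By that lemma, $\tau_n$ has the same law as $\Perm{E_1, \ldots, E_n}$, so we may assume $\tau_n = \Perm{E_1, \ldots, E_n}$ almost surely. The $E_i$ are a.s.\ pairwise distinct, so the relative order of the values $\tau_n(i_1), \ldots, \tau_n(i_k)$ coincides with that of $E_{i_1}, \ldots, E_{i_k}$. Indeed, $\tau_n(i) < \tau_n(j)$ holds if and only if $E_i < E_j$, since $\tau_n(i) = \sum_{j} \One{E_j \le E_i}$ is strictly increasing in $E_i$ across indices.

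The key step is therefore the identity
\[
\Perm{\tau_n(i_1), \ldots, \tau_n(i_k)} = \Perm{E_{i_1}, \ldots, E_{i_k}} \quad\text{a.s.},
\]
which holds because $\Perm{\cdot}$ depends only on the relative order of its arguments. The subfamily $(E_{i_1}, \ldots, E_{i_k})$ consists of independent exponential variables with parameters $\theta_{i_1}, \ldots, \theta_{i_k}$. Applying Lemma~\ref{lem: exponential representation} once more, now to this family of size $k$, shows that $\Perm{E_{i_1}, \ldots, E_{i_k}}$ follows $\Luce{\theta_{i_1}, \ldots, \theta_{i_k}}$, which is the claim.

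There is no real obstacle here. The only points needing care are the almost sure distinctness of the $E_i$, which makes $\Perm{\cdot}$ well defined, and the observation that passing to the equal-in-law realization does not affect the law of the induced subpermutation, since it is a deterministic function of $\tau_n$.
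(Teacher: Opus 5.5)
Your proof is correct and follows the paper's approach: the paper simply declares the lemma immediate from the exponential representation (Lemma~\ref{lem: exponential representation}). You have spelled out the details, namely that the induced subpermutation equals $\Perm{E_{i_1}, \ldots, E_{i_k}}$, and that this subfamily again consists of independent exponentials with parameters $\theta_{i_1}, \ldots, \theta_{i_k}$.
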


\begin{proof}
    This is immediate by Lemma~\ref{lem: exponential representation}.
\end{proof}

\begin{lemma}\label{lem: LIS of high exponentials}
    Fix $A > 0$, $m\in\bN$, and $b>a>0$.
    Let $\theta_1, \ldots, \theta_m \in [a,b]$, and let $(E_i)_{1\le i\le m}$ denote a sequence of independent exponential random variables with parameters $(\theta_i)_{1\le i\le m}$, each conditioned to be larger than $A$.
    Then:
    \[
    \prob{\LIS{E_1, \ldots, E_m} \ge 2e \sqrt{\frac{b}{a}} \sqrt{m}} \le \frac{1}{2^{\sqrt{m}}}
    \quad\text{and}\quad
    \expec{\LIS{E_1, \ldots, E_m}} \le 2e \sqrt{\frac{b}{a}} \sqrt{m} \,.
    \]
\end{lemma}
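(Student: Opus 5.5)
The plan is to reduce to Lemma~\ref{lem: upper bound on density gives upper bound on LIS} by exploiting memorylessness. Conditioned on $E_i > A$, the variable $E_i - A$ is again $\Exp{\theta_i}$. Subtracting the common constant $A$ does not change relative orders, so $\LIS{E_1, \ldots, E_m}$ has the same law as $\LIS{E_1', \ldots, E_m'}$, where the $E_i'$ are independent unconditioned exponentials with parameters $\theta_i \in [a,b]$. The problem therefore becomes one of bounding the LIS of independent, non-identically distributed exponentials.

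Next I would dominate the densities by a single reference density. The density of $E_i'$ is $g_i(x) = \theta_i e^{-\theta_i x} \le b\, e^{-a x}$ for $x \ge 0$. Take $g(x) := a e^{-ax}\One{x\ge0}$, the $\Exp{a}$ density. Then $g_i \le R g$ with $R := b/a \ge 1$. Lemma~\ref{lem: upper bound on density gives upper bound on LIS} now gives the expectation bound $\expec{\LIS{E_1',\ldots,E_m'}} \le 2e\sqrt{(b/a)m}$ immediately.

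For the tail bound, apply the probability estimate of the same lemma with $\delta = 1$:
\[
\prob{\LIS{} \ge 2e\sqrt{Rm}} \le 2^{-\sqrt{Rm}} \le 2^{-\sqrt{m}},
\]
where the last inequality uses $R \ge 1$. This is exactly the claimed statement.

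There is no real obstacle here. The only point needing care is checking that the lemma's hypotheses hold: $R \ge 1$ is guaranteed since $b > a$, and $g$ is a genuine probability density. The memorylessness step is what makes the conditioning on $E_i > A$ harmless.
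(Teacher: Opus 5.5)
Your proposal is correct and is essentially the paper's own proof. The paper likewise uses memorylessness to write $(E_i)_{1\le i\le m}$ as $(A+T_i)_{1\le i\le m}$ in law, with the $T_i$ unconditioned $\Exp{\theta_i}$ variables. It then applies Lemma~\ref{lem: upper bound on density gives upper bound on LIS} with $R=b/a$ and $\delta=1$, exactly as you do.
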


\begin{proof}
    This is immediate from Lemma~\ref{lem: upper bound on density gives upper bound on LIS}, and the observation that $(E_i)_{1\le i\le m}$ is distributed like $(A+T_i)_{1\le i\le m}$ where $(T_i)_{1\le i\le m}$ denotes a sequence of independent exponential random variables with parameters $(\theta_i)_{1\le i\le m}$.
\end{proof}

\begin{corollary}\label{cor: bound on LIS north-west band}
    Let $f$ be a positive, finite, continuous function on $[0,1)$, and let $\mu_f$ denote the associated permuton of \Cref{th: Luce limit permuton}.
    For each $n\in\bN$, let $\sigma_n$ denote a $\mu_f$-sample.
    %$\tau_n$ denote a Luce random permutation with weights $\bth^{(n)}(f) := \big( f\big( \frac{i-1}{n} \big) \big)_{1\le i\le n}$.
    Then, for any given $0< \alpha, \beta< 1$:
    \begin{equation}\label{eq: bound on LIS north-west band}
        \limsup_{n\to\infty} \frac{1}{\sqrt{n}} {\LIS{ \left\{ \sigma_n(i) : i\le \alpha n , \sigma_n(i) \ge \beta n \right\} }}
        \le \eta_{\alpha,\beta} \,,
    \end{equation}
    in probability and in expectation,
    where $(\eta_{\alpha,\beta})_{0< \alpha, \beta< 1}$ is a family of finite constants such that, for any given $\alpha\in(0,1)$,
    \[
        \eta_{\alpha,\beta} \cv{\beta\to1} 0 \,.
    \]
    %The same holds when replacing $\tau_n$ with $\sigma_{n,\mu_f} \sim \sample_n(\mu)$.
\end{corollary}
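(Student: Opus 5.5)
We use the random-weights description of Lemma~\ref{lem: permuton sample is Luce with random weights}: conditionally on $\bU$, $\sigma_n=\Perm{T_1,\ldots,T_n}$, where the $T_i\sim\Exp{f(U_{(i)})}$ are independent. The positions $i\le\alpha n$ correspond to $U_{(i)}\le \alpha'$ for some fixed $\alpha'\in(\alpha,1)$, with probability $1-o(1)$ by the DKW inequality. On $[0,\alpha']$ the continuous positive function $f$ satisfies $a:=\min_{[0,\alpha']}f>0$ and $b:=\max_{[0,\alpha']}f<\infty$. The condition $\sigma_n(i)\ge\beta n$ says that $T_i$ lies among the top $(1-\beta)n$ values. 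We replace it by a deterministic threshold: the empirical distribution of the values $F(T_i)$, which are $\mu_f$-second coordinates and hence uniform, concentrates. So with high probability $\sigma_n(i)\ge\beta n$ implies $T_i\ge A_{\beta'}:=F^{-1}(\beta')$ for any fixed $\beta'<\beta$ (take $\beta'=\beta-\epsilon_0$; this again uses DKW, now for uniforms on the $y$-axis).

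Let $S:=\{i: U_{(i)}\le\alpha',\ T_i\ge A_{\beta'}\}$ and $m:=\#S$. With high probability the LIS in \eqref{eq: bound on LIS north-west band} is at most $\LIS{T_i: i\in S}$. Condition on $\bU$ and on $S$. By independence, the $T_i$ with $i\in S$ are then independent exponentials with parameters in $[a,b]$, each conditioned to exceed $A_{\beta'}$. Lemma~\ref{lem: LIS of high exponentials} therefore gives $\LIS{T_i:i\in S}\le 2e\sqrt{b/a}\sqrt m$, except on an event of probability at most $2^{-\sqrt m}$. The same bound holds in expectation.

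It remains to control $m$. Conditionally on $\bU$, $m$ is a sum of independent Bernoulli variables, and $\mathbb{E}[m]\le n\,\mu_f([0,1]\times[\beta',1])=n(1-\beta')$ because the $y$-marginal is uniform. Hence $m\le 2(1-\beta')n$ with high probability, and in any case $m\le n$. This gives, in probability, $\limsup n^{-1/2}\LIS{\cdots}\le \eta_{\alpha,\beta}:=2e\sqrt{2b/a}\sqrt{1-\beta'}$, which tends to $0$ as $\beta\to1$ because $a,b$ depend only on $\alpha$. For the expectation we use Jensen on $\sqrt m$ conditionally. The complementary events (DKW failure, threshold failure) have probability $O(n^{-2})$ or exponentially small, and on them we bound the LIS by $n$, which contributes $o(\sqrt n)$.

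The main obstacle is the replacement of the rank condition $\sigma_n(i)\ge\beta n$ by a value threshold for $T_i$. We handle it by applying DKW to the i.i.d.\ uniform second coordinates $F(T_i)$, which are unconditionally uniform. The conditioning step in Lemma~\ref{lem: LIS of high exponentials} then has to be justified: the set $S$ is determined by the events $\{T_i\ge A\}$, so conditional on $S$ the variables are exactly conditioned exponentials. The slack $\beta'<\beta$ is harmless in the limit.
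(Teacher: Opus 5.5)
Your proof is correct and is essentially the paper's argument: view $\sigma_n$ through the $\mu_f$-i.i.d.\ points $(U_{(i)},F(T_i))$, replace the rank box by a geometric box $[0,\alpha']\times[\beta',1]$, condition on $\bU$ and on the index set so that the $T_i$ become independent exponentials with parameters in $[\min f,\max f]$ conditioned to exceed a threshold, apply Lemma~\ref{lem: LIS of high exponentials}, and finish with a law of large numbers and Jensen for the number of points in the box. Your explicit DKW slack is a more careful version of the paper's ``monotony plus $I_\alpha/n\to\alpha$, $J_\beta/n\to\beta$'' reduction, and bounding $m$ by about $(1-\beta')n$ instead of $n\mu_f(B_{\alpha,\beta})$ is harmless; just let the slack shrink with $\beta$ (e.g.\ $\beta'=\beta^2$) rather than fixing $\epsilon_0$, so that $\eta_{\alpha,\beta}\to0$ as $\beta\to1$.
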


\begin{proof}
    Let $(U_i)_{1\le i\le n}$ be a sequence of i.i.d.\ uniform random variables, conditioned to be ordered as $U_{1}< \ldots< U_{n}$.
    Let $(E_i)_{1\le i\le n}$ be a sequence of i.i.d.\ standard exponential r.v.'s, and for each $i\in[n]$, let $V_i := F\big( E_i/f(U_{i}) \big)$ with the notation of \Cref{th: Luce limit permuton}.
    By that same theorem, $\cP := \left\{ (U_{i},V_i) : i\in[n] \right\}$ is distributed like a $\mu_f$-i.i.d.\ point set, and $\sigma_n := \Perm{\cP} =  \Perm{V_1, \ldots, V_n}$ is distributed like a $\mu_f$-sample.
    For $0<\alpha,\beta<1$, we let 
    \[
    I_\alpha := \max \left\{ i\in[n] : U_{i} \le \alpha \right\} 
    \quad\text{and}\quad
    J_\beta := \max \left\{ \sigma_n(i) : i\in[n] , V_{i} < \beta \right\} \,.
    \]
    Since $\mu_f$ is a permuton, we know that $\{ U_i : i\in[n] \}$ and $\{ V_i : i\in[n] \}$ are each distributed like a point set of i.i.d.\ uniform points on $[0,1]$.
    Therefore, by the Law of Large Numbers (or the DKW inequality, cf Corollary~\ref{cor: DKW inequality for uniform variables}):
    \begin{equation}\label{eq: LLN for indices}
    \frac1n I_\alpha \cv{n\to\infty} \alpha
    \quad\text{and}\quad
    \frac1n J_\beta \cv{n\to\infty} \beta \,,
    \end{equation}
    almost surely.
    Let us write here
    \[
        \sigma_n^{\langle I_\alpha, J_\beta \rangle} := \Perm{ \sigma_n(i) 
        \,:\,
        i \le I_\alpha , \sigma_n(i) > J_\beta } \,,
    \]
    and note that it coincides with
    \[
        \sigma_n^{\langle I_\alpha, J_\beta \rangle} = \Perm{\left\{\big. (U_i, V_i) 
        \,:\,
        i\in[n], U_i \le \alpha, V_i \ge \beta \right\}} 
        = \Perm{\cP \cap B_{\alpha,\beta}} \,,
    \]
    with the notation of \Cref{sec: notation} and $B_{\alpha,\beta} := [0,\alpha] \times [\beta,1]$.
    In words, $\sigma_n^{\langle I_\alpha, J_\beta \rangle}$ is constructed with the same $\mu_f$-i.i.d.\ point set $\cP$ as $\sigma_n$, but restricting it to the north-west band $B_{\alpha,\beta}$.
    By monotony and thanks to \eqref{eq: LLN for indices}, it suffices to prove \eqref{eq: bound on LIS north-west band} for the LIS of $\sigma_n^{\langle I_\alpha, J_\beta \rangle}$.
    First, note that the (random) size $N_{\alpha,\beta}$ of this permutation follows a binomial distribution with parameters $n$ and $\mu_f(B_{\alpha,\beta})$.
    Then, conditionally given the set 
    \[
    \cI := \left\{ i\in[n] : (U_i, V_i) \in B_{\alpha,\beta} \right\} \,,
    \]
    and the sequence $\bU := (U_i)_{1\le i\le n}$,
    the family $\big( F^{-1}(V_i) \big)_{i\in \cI}$ consists of independent exponential r.v.'s with parameters $\big( f(U_i) \big)_{i\in\cI}$, each conditioned to be larger than $F^{-1}(\beta)$.
    Therefore, by Lemma~\ref{lem: LIS of high exponentials}, 
    \[
    \probcond{\LIS{ \left\{ V_i : i\in\cI \right\}} \ge 2e \sqrt{\lambda_\alpha N_{\alpha,\beta}}}{\bU,\cI}
    \le \frac{1}{2^{\sqrt{N_{\alpha,\beta}}}}   
    \quad\text{and}\quad
    \expecond{\LIS{ \left\{ V_i : i\in\cI \right\}}}{\bU,\cI}
    \le 2e \sqrt{\lambda_\alpha N_{\alpha,\beta}} 
    \,,
    \]
    where $\lambda_\alpha := \frac{ \max_{[0,\alpha]} f }{ \min_{[0,\alpha]} f } \in (0,\infty)$.
    On the one hand, 
    observe that $\frac1n N_{\alpha,\beta}\to\mu_f(B_{\alpha,\beta}) > 0$ almost surely as $n\to\infty$, for any given $0<\alpha,\beta<1$.
    On the other hand,
    by Jensen's inequality, we can bound the unconditional expectation by
    \[
    \expec{\LIS{ \left\{ V_i : i\in\cI \right\}}}
    \le 2e \sqrt{\lambda_\alpha \mu_f(B_{\alpha,\beta}) n} 
    \,.
    \]
    Setting $\eta_{\alpha,\beta} := 2e\sqrt{\lambda_\alpha \mu_f(B_{\alpha,\beta})}$, the conclusion readily follows.
\end{proof}

\begin{remark}
    Corollary~\ref{cor: bound on LIS north-west band} shows that the LIS of Luce permuton samples (and permutations) does not concentrate on the north-west band $B_{\alpha,\beta}$ when $\beta\to1$ (see, e.g., the blue band in \Cref{fig: LLN zones}).
    Similarly, we could prove that the LIS does not concentrate either on the south-east band $B_{\alpha,\beta}^* := [\beta,1]\times[0,\alpha]$, under the same assumption.
    In the proof of \Cref{th: LLN with variational constant}, we will show that the LIS does not concentrate either on the north-east square $S_\alpha := [\alpha,1]^2$ as $\alpha\to1$ (see, e.g., the red band in \Cref{fig: LLN zones}), but this crucially requires the assumption that the parameter $f$ gives rise to a $\sqrt n$ growth rate.
    For functions giving rise to a higher growth rate, the LIS will always concentrate on $S_\alpha$, making its study more difficult.
\end{remark}

\begin{proposition}\label{prop: LLN for restricted permuton samples}
    Let $\mu$ be a permuton with density $\rho$ which is $\cC^1$, positive and bounded on $[0,\alpha]^2$ for all $\alpha\in(0,1)$.
    For each $n\in\bN$, let $\sigma_{n}$ denote a $\mu$-sample, and for any $\alpha,\beta\in(0,1]$, write 
    \[
    \sigma_{n}^{(\alpha,\beta)} := \Perm{\left\{ \sigma_{n}(i) : i\le\alpha n , \sigma_n(i) \le \beta n \right\}} \,.
    \]
    Then, there exists a family of finite constants $(K_{\alpha,\beta})_{0<\alpha,\beta<1}$ such that, for each $0< \alpha, \beta< 1$,
    \[
        \lim_{n\to\infty} \frac{1}{\sqrt n} \LIS{\sigma_n^{(\alpha,\beta)}}
        = 2 K_{\alpha,\beta} \,,
    \]
    in probability and in expectation,
    and
    \[
        K_{\alpha,\beta} \cv{(\alpha,\beta)\to(1,1)}
        K
        := \sup_{\phi \nearrow} \int_0^1 \sqrt{ \phi'(t) \rho(t,\phi(t)) } \mathrm{d}t \,,
    \]
    where the supremum ranges over all non-decreasing, right-continuous curves $\phi:[0,1)\to[0,1)$.
\end{proposition}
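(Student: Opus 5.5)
The plan is to reduce to the Deuschel--Zeitouni law of large numbers \cite{Deuschel_Zeitouni_1995}. That result says that for i.i.d.\ points with a $\cC^1$ density bounded above and below on a rectangle, the LIS divided by $\sqrt n$ converges to $2\sup_\phi\int\sqrt{\phi'\rho(t,\phi)}$.

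\textbf{Step 1: reduce to a Poissonized count on a fixed rectangle.} Fix $0<\alpha,\beta<1$ and let $\cP$ be the $\mu$-i.i.d.\ point set defining $\sigma_n$. By the DKW inequality (Corollary~\ref{cor: DKW inequality for uniform variables}), the $\lfloor\alpha n\rfloor$-th smallest $x$-coordinate is $\alpha+O(\sqrt{\log n/n})$, and likewise for $y$. So $\sigma_n^{(\alpha,\beta)}$ differs from $\Perm{\cP\cap[0,\alpha]\times[0,\beta]}$ only through points in thin strips of width $O(\sqrt{\log n/n})$. These strips contain $O(\sqrt{n\log n})$ points, and changing a point set by $m$ points changes its LIS by at most $m$. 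That bound alone is too crude, so instead we sandwich $\sigma_n^{(\alpha,\beta)}$ between the restrictions of $\cP$ to $[0,\alpha\pm\epsilon]\times[0,\beta\pm\epsilon]$. This reduces the problem to the LIS of $\cP\cap R$ for a fixed rectangle $R=[0,a]\times[0,b]$, together with continuity of the limit in $(a,b)$.

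\textbf{Step 2: apply Deuschel--Zeitouni.} Conditionally on its cardinality $N\sim\Binomial{n}{\mu(R)}$, the set $\cP\cap R$ is i.i.d.\ with density $\rho/\mu(R)$ on $R$. This density is $\cC^1$ and bounded above and below there, since $R\subseteq[0,\max(a,b)]^2$ with $\max(a,b)<1$. After the affine rescaling of $R$ to the unit square, Deuschel--Zeitouni gives
\[
\LIS{\cP\cap R}/\sqrt N\to 2\sup_\phi\int\sqrt{\phi'\rho/\mu(R)}
\]
in probability. Since $N/n\to\mu(R)$, this yields $\LIS{\cP\cap R}/\sqrt n\to 2K_{a,b}$, where $K_{a,b}:=\sup\int_0^a\sqrt{\phi'(t)\rho(t,\phi(t))}\,dt$ over non-decreasing $\phi:[0,a]\to[0,b]$. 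Convergence in expectation follows from uniform integrability: either McDiarmid (Theorem~\ref{th: McDiarmid general}) or the bound of Lemma~\ref{lem: upper bound on density gives upper bound on LIS} with $R=\sup_{[0,a]\times[0,b]}\rho$ controls the moments.

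\textbf{Step 3: continuity and the limit $K_{\alpha,\beta}\to K$.} $K_{a,b}$ is non-decreasing in both arguments. It is also continuous in $(a,b)$ on $(0,1)^2$. To see this, compose a near-optimal curve for $(a+\epsilon,b+\epsilon)$ with an affine contraction into $[0,a]\times[0,b]$; this loses a factor $1-O(\epsilon)$, using the uniform continuity of $\rho$ on compacts. Combined with monotonicity and the sandwich of Step 1, continuity gives the first claim with $K_{\alpha,\beta}$.

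For the limit, monotonicity shows $K_{\alpha,\beta}$ increases to $\sup_{\alpha,\beta<1}K_{\alpha,\beta}$. This is at most $K$, because a curve on a sub-rectangle extends to $[0,1)$ by constancy, and a constant segment contributes $0$. Conversely, take $\phi$ nearly optimal for $K$ and truncate it at $t<\alpha$ and $\phi<\beta$. Monotone convergence of the integral of the nonnegative integrand gives $\int_0^{\alpha}\sqrt{\phi'\rho(t,\min(\phi,\beta))}\to\int_0^1$. One has to handle curves where $\phi$ reaches $\beta$ before time $\alpha$: cap the curve at $\beta$, and note that the integral up to the hitting time of $\beta$ converges. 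Singular parts of $\phi$ contribute nothing to the integral, so right-continuity causes no issue.

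\textbf{Main obstacle.} I expect the main difficulty to be the technical application of Deuschel--Zeitouni. It needs the precise regularity hypotheses, which hold only on compact sub-squares; this is why the statement is restricted to $\alpha,\beta<1$. It also needs the Poissonization/conditioning on $N$ and the rescaling of the variational formula. The continuity of $K_{a,b}$ used in the sandwich is the second delicate point.
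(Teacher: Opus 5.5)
Your proposal is correct and follows essentially the same route as the paper. You reduce $\sigma_n^{(\alpha,\beta)}$ to the point set $\cP\cap[0,\alpha]\times[0,\beta]$ via monotonicity and the behaviour of the order statistics, condition on the binomial count, rescale to the unit square and apply Deuschel--Zeitouni, and then get $K_{\alpha,\beta}\to K$ by truncating a near-optimal curve when it reaches level $\beta$ (the paper's $t_{\alpha,\beta}$). Your explicit proof that $K_{a,b}$ is continuous in $(a,b)$ is a useful addition. The paper's ``by monotony'' reduction from the random thresholds $I_\alpha/n, J_\beta/n$ to fixed $(\alpha,\beta)$ tacitly relies on exactly that continuity.
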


\begin{proof}
    Let $\alpha,\beta\in(0,1)$ and write $R_{\alpha,\beta} := [0,\alpha] \times [0,\beta]$.
    Let $\rho_{\alpha,\beta}$ denote the probability density induced by $\rho$ on $R_{\alpha,\beta}$, that is,
    \[
    \rho_{\alpha,\beta} = \frac{1}{\mu(R_{\alpha,\beta})} \rho \One{R_{\alpha,\beta}} \,.
    \]
    Write $\sigma_n = \Perm{\cP}$, where $\cP$ is a $\mu$-i.i.d.\ set of $n$ points in $[0,1]^2$.
    Let $\tilde \sigma_n^{(\alpha,\beta)} := \Perm{\cP \cap R_{\alpha,\beta}}$ be the permutation induced by the points in $R_{\alpha,\beta}$.
    Then, $\tilde \sigma_n^{(\alpha,\beta)}$ differs from $\sigma_n^{(\alpha,\beta)}$, but as in the proof of Corollary~\ref{cor: bound on LIS north-west band}, we can write $\tilde \sigma_n^{(\alpha,\beta)} = \sigma_n^{(I_\alpha/n,J_\beta/n)}$ for random variables $I_\alpha, J_\beta$ that are a.s.\ asymptotic to $\alpha n , \beta n$.
    Thus, by monotony, it suffices to prove Proposition~\ref{prop: LLN for restricted permuton samples} for $\tilde \sigma_n^{(\alpha,\beta)}$.
    Now, observe that the random permutation $\tilde \sigma_n^{(\alpha,\beta)}$ has a random size $N_{\alpha,\beta} \sim \Binomial{n}{\mu(R_{\alpha,\beta})}$, and conditionally given $N_{\alpha,\beta}$, it is distributed like $\Perm{\tilde\cP}$ where $\tilde\cP$ consists of $N_{\alpha,\beta}$ i.i.d.\ random points in $R_{\alpha,\beta}$ distributed under the density $\rho_{\alpha,\beta}$.
    Now, let $\tilde\rho_{\alpha,\beta}$ denote the probability density on $[0,1]^2$ obtained by ``stretching'' $\rho_{\alpha,\beta}$:
    \begin{equation*}
        \text{for all }(x,y)\in[0,1]^2,\quad
        \tilde\rho_{\alpha,\beta}(x,y) := 
        \alpha \beta \rho_{\alpha,\beta}(\alpha x , \beta y)
        = \frac{\alpha \beta}{\mu(R_{\alpha,\beta})} 
        \rho( \alpha x , \beta y) \,.
    \end{equation*}
    Note that the $\mathrm{Perm}$ map is invariant under linear transformations.
    Therefore, conditionally given $N_{\alpha,\beta}$, the random permutation $\tilde \sigma_n^{(\alpha,\beta)}$ is distributed like\footnote{
    We avoid the terminology ``permuton sample'' here, as $\tilde\rho_{\alpha,\beta}$ is not a \emph{permuton} density.
    With the terminology of \cite{Dubach_2023}, it is a \emph{prepermuton}.
    } 
    $\Perm{\tilde \cP}$ where $\tilde \cP$ consists of $N_{\alpha,\beta}$ i.i.d.\ random points in $[0,1]^2$ distributed under the density $\tilde\rho_{\alpha,\beta}$.
    Since $\tilde\rho_{\alpha,\beta}$ is $\cC^1$ and bounded below on $[0,1]^2$, we can apply \cite[Theorem~2]{Deuschel_Zeitouni_1995}:
    \begin{equation*}
        \frac{1}{\sqrt{N_{\alpha,\beta}}} \LIS{ \tilde \sigma_n^{(\alpha,\beta)} } 
        \cv{n\to\infty} 
        2 \sup_{\phi \nearrow} \int_0^1 \sqrt{ \phi'(t) \tilde\rho_{\alpha,\beta}(t,\phi(t)) } \mathrm{d}t
    \end{equation*}
    in probability and in expectation, and further:
    \begin{equation*}
        \frac{1}{\sqrt{n}} \LIS{ \tilde \sigma_n^{(\alpha,\beta)} } 
        \cv{n\to\infty} 
        2 \sqrt{\mu(R_{\alpha,\beta})} \sup_{\phi \nearrow} \int_0^1 \sqrt{ \phi'(t) \tilde\rho_{\alpha,\beta}(t,\phi(t)) } \mathrm{d}t
        =: 2K_{\alpha,\beta}
    \end{equation*}
    in probability and in expectation.
    It remains only to show that $K_{\alpha,\beta} \to K$ as $(\alpha,\beta) \to (1,1)$.
    First, note that $\mu(R_{\alpha,\beta}) \to \mu\big( [0,1]^2 \big) = 1$ as $(\alpha,\beta)\to(1,1)$, thanks to the assumption that $\mu$ is a permuton.
    Then, for $\alpha,\beta \in (0,1]$, let $\cC^\nearrow(\alpha,\beta)$ denote the set of non-decreasing right-continuous curves $\phi: [0,\alpha) \to [0,\beta)$.
    An elementary change of variables allows us to rewrite
    \begin{align*}
        \sup_{\phi \in \cC^\nearrow(1,1)} \int_0^1 \sqrt{ \phi'(t) \tilde\rho_{\alpha,\beta}(t,\phi(t)) } \mathrm{d}t
        = \frac{1}{\sqrt{\mu(R_{\alpha,\beta})}} 
        \sup_{\phi \in \cC^\nearrow(\alpha,\beta)} \int_0^{\alpha} \sqrt{ \phi'(t) \rho(t, \phi(t)) } \mathrm{d}t \,,
    \end{align*}
    and thus, it remains to show that
    \begin{align}\label{eq: convergence of DZ constant after rewriting}
        \sup_{\phi \in \cC^\nearrow(\alpha,\beta)} \int_0^{\alpha} \sqrt{ \phi'(t) \rho(t, \phi(t)) } \mathrm{d}t
        \cv{(\alpha,\beta)\to(1,1)}
        \sup_{\phi \in \cC^\nearrow(1,1)} \int_0^{1} \sqrt{ \phi'(t) \rho(t, \phi(t)) } \mathrm{d}t \,.
    \end{align}
    It is clear that the left-hand side is less than or equal to the right-hand side.
    Conversely, consider $\phi\in \cC^\nearrow(1,1)$.
    Set
    \begin{equation*}
        t_{\alpha,\beta} := \sup\{ t\in[0,\alpha) : \phi(t)< \beta^2 \} \,.
    \end{equation*}
    Since $t_{\alpha,\beta} \to 1$ as $(\alpha,\beta)\to1$, it holds that 
    \begin{equation*}
        \int_0^{t_{\alpha,\beta}} \sqrt{ \phi'(t) \rho(t,\phi(t)) } \mathrm{d}t 
        \cv{(\alpha,\beta)\to(1,1)} 
        \int_0^1 \sqrt{ \phi'(t) \rho(t,\phi(t)) } \mathrm{d}t \,.
    \end{equation*}
    Moreover, the restriction of $\phi$ to $[0,t_{\alpha,\beta})$ may be seen as an element $\phi_{\alpha,\beta}$ of $\cC^\nearrow(\alpha,\beta)$ by setting 
    \[
        \phi_{\alpha,\beta} := \phi \One{[0,t_{\alpha,\beta})} + \beta^2 \One{[t_{\alpha,\beta},\alpha)} \,,
    \]
    for which it holds that
    \begin{equation*}
        \int_0^{t_{\alpha,\beta}} \sqrt{ \phi'(t) \rho(t,\phi(t)) } \mathrm{d}t 
        = \int_0^{\alpha} \sqrt{ \phi_{\alpha,\beta}'(t) \rho(t,\phi_{\alpha,\beta}(t)) } \mathrm{d}t \,.
    \end{equation*}
    From there, it is straightforward to deduce that
    \begin{align*}
        \liminf_{(\alpha,\beta)\to(1,1)} \sup_{\phi \in \cC^\nearrow(\alpha,\beta)} \int_0^{\alpha} \sqrt{ \phi'(t) \rho(t, \phi(t)) } \mathrm{d}t
        \ge
        \sup_{\phi \in \cC^\nearrow(1,1)} \int_0^{1} \sqrt{ \phi'(t) \rho(t, \phi(t)) } \mathrm{d}t \,,
    \end{align*}
    thus showing \eqref{eq: convergence of DZ constant after rewriting}, and concluding this proof.
\end{proof}

Now, we can properly state and prove our main result.
Some of the main arguments are illustrated in \Cref{fig: LLN zones}.

\begin{theorem}\label{th: LLN with variational constant}
    Let $f$ be a positive, finite, non-increasing, $\cC^1$ function on $[0,1)$.
    Let $\mu_f$ be the associated permuton of \Cref{th: Luce limit permuton} with density $\rho_f$ given by \eqref{eq: density of the Luce permuton}.
    For each $n\in\bN$, let $\bth^{(n)}(f) := \big( f\big(\frac{i-1}{n}\big) \big)_{1\le i\le n}$ be the canonically associated weights.
    Let $\tau_n$ be a $\Luce{\bth^{(n)}(f)}$ random permutation, and $\sigma_{n,\mu_f}$ be a $\mu_f$-sample.
    Assume that
    {$-\log f$ is convex}
    and that $\expec{\LIS{\tau_n}} \asymp \sqrt n$.
    Then:
    \[
        \lim_{n\to\infty} \frac{1}{2\sqrt n} \LIS{\tau_n}
        = \lim_{n\to\infty} \frac{1}{2\sqrt n} \LIS{\sigma_{n,\mu_f}} 
        = \sup_{\phi \nearrow} \int_0^1 \sqrt{ \phi'(t) \rho_f(t,\phi(t)) } \mathrm{d}t
        =: K(f)
    \]
    where the limits hold in probability and in expectation, and the supremum ranges over all non-decreasing, right-continuous curves $\phi:[0,1)\to[0,1)$.
\end{theorem}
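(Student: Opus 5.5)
We first prove the statement for the continuous model $\sigma_n:=\sigma_{n,\mu_f}$ and then transfer it to $\tau_n$ via the coupling of Proposition~\ref{prop: coupling discrete and continuous}. Normalize $f(0)=1$, so that $\sup f\le 1$ by monotonicity. For $\alpha,\beta\in(0,1)$, the density $\rho_f$ of \eqref{eq: density of the Luce permuton} is $\cC^1$, positive and bounded on $[0,\alpha]\times[0,\beta]$, since $f$ is $\cC^1$ and bounded away from $0$ on $[0,\alpha]$, and $F^{-1}$ is smooth and bounded on $[0,\beta]$. Hence Proposition~\ref{prop: LLN for restricted permuton samples} applies. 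It gives $\LIS{\sigma_n^{(\alpha,\beta)}}/\sqrt n\to 2K_{\alpha,\beta}$ with $K_{\alpha,\beta}\to K(f)$ as $(\alpha,\beta)\to(1,1)$. This immediately yields the lower bound $\liminf \LIS{\sigma_n}/(2\sqrt n)\ge K(f)$ in probability.

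For the upper bound we decompose. Fix $\alpha<\beta<1$. Any increasing subsequence of $\sigma_n$ splits into at most three pieces: points in $[0,\beta n]^2$; points with index $\le\alpha n$ and value $\ge \beta n$ (north-west band); and points with index $\ge\alpha n$ (east strip). The first piece is handled by Proposition~\ref{prop: LLN for restricted permuton samples}. The second is bounded by $\eta_{\alpha,\beta}\sqrt n$ using Corollary~\ref{cor: bound on LIS north-west band}. The east strip can be further split into the south-east band $[\alpha,1]\times[0,\gamma]$, treated symmetrically, and the north-east square $S_\gamma=[\gamma,1]^2$.

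\textbf{The main obstacle is showing that the LIS restricted to the north-east square is $o(\sqrt n)$ uniformly as the square shrinks.} Here the $\sqrt n$ hypothesis enters. I would use Lemma~\ref{lem: permuton sample is Luce with random weights}: conditionally on the uniforms, the points with index $\ge\alpha n$ form a Luce permutation with weights $f(U_{(i)})$, $U_{(i)}\ge\alpha$. Rerunning the upper-bound argument of Theorem~\ref{th: LIS growth rate - general} on these indices bounds the expected LIS by $C\,\expec{\sum_k \sqrt{m_k^{(n)}(\bU)\One{}}}$ restricted to $U_{(i)}\ge \alpha$. This is at most $C\sqrt n\sum_{k\ge k_\alpha}\sqrt{p_k}$ plus lower-order terms, where $k_\alpha\to\infty$ as $\alpha\to1$ (after absorbing the $k$'s with $A_k\cap[\alpha,1)\ne\emptyset$, using monotonicity and Lemma~\ref{lem: sqrt of binomial}/Jensen). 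By Proposition~\ref{prop: growth rate with phi} and Lemma~\ref{lem: summable condition for sqrt n regime}, the hypothesis $\expec{\LIS{\tau_n}}\asymp\sqrt n$ and convexity of $-\log f$ give $\sum_k\sqrt{p_k}<\infty$, so this tail tends to $0$. If necessary, the south-east band could be handled the same way by restricting to weights bounded below. Choosing first $\alpha$ close to $1$ and then $\beta$, and using McDiarmid's inequality (Corollary~\ref{cor: McDiarmid for Luce}, which conditionally on $\bU$ applies to $\sigma_n$) to pass from expectations to probability, gives $\limsup\LIS{\sigma_n}/(2\sqrt n)\le K(f)$. Convergence in expectation then follows from the concentration and the $\cO(\sqrt n)$ bound.

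Finally, the result transfers to $\tau_n$. By Proposition~\ref{prop: coupling discrete and continuous}(i), on $[(1-\varepsilon)n]$ the two permutations agree off $\cI_\unmatched$. Hence
\[
\abs{\LIS{\tau_n}-\LIS{\sigma_n}}\le \LIS{\{\tau_n(i):i\in\cI_\unmatched\}}+\LIS{\{\sigma_n(i):i\in\cI_\unmatched\}}+L_\varepsilon(\tau_n)+L_\varepsilon(\sigma_n),
\]
where $L_\varepsilon$ denotes the LIS over indices $>(1-\varepsilon)n$. The first two terms are $\cO(n^{1/4}(\log n)^{3/4})=o(\sqrt n)$ by item (iii). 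For the last two we need $\limsup \expec{L_\varepsilon}/\sqrt n\to0$ as $\varepsilon\to0$. For $\sigma_n$ this was shown above. For $\tau_n$, the same Theorem~\ref{th: LIS growth rate - general}-type bound over indices $i>(1-\varepsilon)n$ gives $C\sum_k\sqrt{m_k^{(n)}}$ over those indices, and Proposition~\ref{prop: growth rate with phi} controls it by $\sqrt n\sum_{k\ge k_\varepsilon}\sqrt{p_k}+\cO(1)$ (using Lemma~\ref{lem: induced Luce}). Letting $n\to\infty$ and then $\varepsilon\to0$ concludes, both in expectation and, via Corollary~\ref{cor: McDiarmid for Luce}, in probability.
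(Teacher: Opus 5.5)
Your architecture is the same as the paper's. You get the lower bound from Proposition~\ref{prop: LLN for restricted permuton samples} and control the north-west band with Corollary~\ref{cor: bound on LIS north-west band}. You bound the whole strip of indices beyond $\alpha n$ with the upper-bound half of Theorem~\ref{th: LIS growth rate - general} and $\sum_k\sqrt{p_k}<\infty$. Because that bound covers the entire east strip, your extra split into a south-east band and $S_\gamma$ is unnecessary. You then transfer to $\tau_n$ through Proposition~\ref{prop: coupling discrete and continuous}. Your pathwise inequality for $\abs{\LIS{\tau_n}-\LIS{\sigma_n}}$ is correct. It is a slightly cleaner transfer than the paper's ratio argument, because it does not need the LIS over the first $n_\varepsilon$ indices to be of order $\sqrt n$ in probability.

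The step that fails as written is the estimate for the last indices. You bound it by $C\sqrt n\sum_{k\ge k_\alpha}\sqrt{p_k}$ and assert that $k_\alpha\to\infty$ as $\alpha\to1$. That holds only when $f(t)\to0$ as $t\uparrow1$. If $\inf f>0$, then $k_\alpha$ stays bounded and $\sum_{k\ge k_\alpha}\sqrt{p_k}$ does not tend to $0$, so you get no smallness. This case is covered by the theorem: for example $f\equiv1$, where the sum equals $1$, or the Mallows-like $f_\beta(t)=e^{-\beta t}$.

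The fix is to keep the restricted masses $p_k^{(\alpha)}:=\Leb(A_k\cap[\alpha,1))$ instead of $p_k$. For $\sigma_n$ the relevant counts are $\Binomial{n}{p_k^{(\alpha)}}$, so Jensen gives $\sqrt{np_k^{(\alpha)}}$. Since $p_k^{(\alpha)}\le\min\{p_k,1-\alpha\}$ and $\sum_k\sqrt{p_k}<\infty$, dominated convergence yields $\sum_k\sqrt{p_k^{(\alpha)}}\to0$ in all cases. The paper instead treats $\inf f>0$ and $\inf f=0$ separately. Your bound on $L_\varepsilon(\tau_n)$ needs the same correction.

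McDiarmid's inequality is also not what passes from expectation to probability here. Applied to the whole permutation, with $n$ coordinates, it only gives $\prob{\abs{\LIS{\tau_n}-\expec{\LIS{\tau_n}}}>\delta\sqrt n}\le 2e^{-2\delta^2}$, which does not tend to $0$. What you need is Markov's inequality on the nonnegative error terms: the east-strip LIS, and the right-hand side of your pathwise inequality. Convergence in probability of the main term is supplied by Proposition~\ref{prop: LLN for restricted permuton samples}, and this is exactly how the paper argues.
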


\begin{figure}
    \centering
    \includegraphics[width=0.64\linewidth]{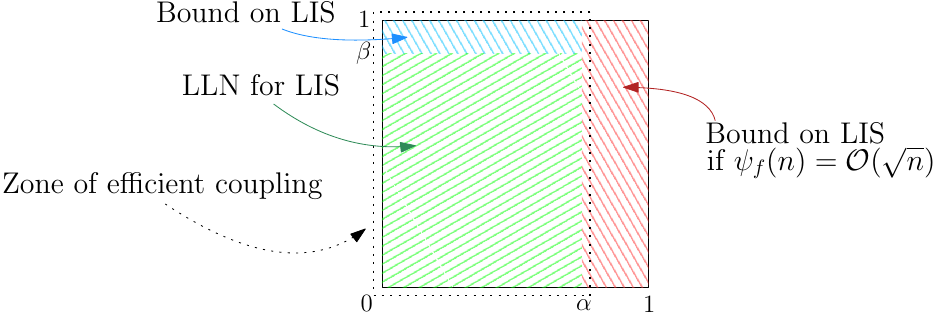}
    \caption{
    Illustration of the main arguments in the proof of \Cref{th: LLN with variational constant}.
    The permuton sample $\sigma_{n,\mu_f}$ is constructed via a $\mu_f$-i.i.d.\ point set in the unit square $[0,1]^2$.
    Green rectangle:
    Proposition~\ref{prop: LLN for restricted permuton samples} identifies the limit constant for the LIS of points in $[0,\alpha]\times [0,\beta]$ for any fixed $\alpha,\beta\in(0,1)$.
    Blue band:
    Corollary~\ref{cor: bound on LIS north-west band} bounds the LIS of points in $[0,\alpha] \times [\beta,1]$ as $\beta = \beta(\alpha) \to 1$.
    Red band:
    in the proof of \Cref{th: LLN with variational constant}, we bound the LIS of points in $[\alpha,1] \times [0,1]$ as $\alpha \to 1$.
    Dotted box:
    Proposition~\ref{prop: coupling discrete and continuous} shows that $\tau_n$ and $\sigma_{n,\mu_f}$ can be efficiently coupled within $[0,\alpha]\times [0,1]$ for any fixed $\alpha\in(0,1)$.
    }
    \label{fig: LLN zones}
\end{figure}

\begin{proof}
    We will assume, w.l.o.g., that $f$ is normalized so that $f(0)=1$.
    Fix $\varepsilon>0$, and write $n_\varepsilon := \lfloor (1-\varepsilon)n \rfloor$.
    Consider the coupling of $\tau_n \sim \Luce{\bth^{(n)}(f)}$ and $\sigma_n \sim \sample_n(\mu_f)$, together with a random subset $\cI_\unmatched \subseteq [n_\varepsilon]$, given by Proposition~\ref{prop: coupling discrete and continuous}.
    We shall argue the following:
    \begin{enumerate}
        \item the LIS of $\tau_n$ and $\sigma_n$ are not concentrated on the positions $i$ close to $n$;
        %\item the LIS of $\tau_n$ and $\sigma_n$ are not concentrated on the values $\tau_n(i)$ and $\sigma_n(i)$ close to $n$;
        \item $\LIS{\tau_n}$ and $\LIS{\sigma_n}$ are asymptotically equivalent;
        \item $\frac{1}{\sqrt n} \LIS{\sigma_n}$ has the claimed limit.
    \end{enumerate}
For $\pi\in\fS_n$ and $1\le a\le b\le n$, we write $\pi^{[a,b]} := \Perm{\pi(i) : a\le i\le b}$.
Note that, with the notation of \Cref{sec: notation}, $\LIS{\pi^{[a,b]}} = \LIS{\left\{ \pi(i) : a\le i\le b \right\}}$.

\medskip\noindent\textbf{1. Bounding the LIS on the last indices.}
By Lemma~\ref{lem: induced Luce}, if $\tau_n \sim \Luce{\bth^{(n)}}$, then $\tau_n^{[n_\varepsilon,n]}$ follows the Luce distribution on $\fS_{n-n_\varepsilon+1}$ with weights $\big( \theta_i^{(n)} \big)_{n_\varepsilon\le i\le n}$.
Therefore, by \Cref{th: LIS growth rate - general} and its proof,
\[
    \expec{\LIS{\tau_n^{[n_\varepsilon,n]}}}
    \le 2e^{3/2} \sum_{k\in \mathbb{Z}}\sqrt{ m_k^{(n,\varepsilon)} }
    \quad\text{where}\quad
    m_k^{(n,\varepsilon)} := \card{ i\in[n_\varepsilon, n] : k \le -\log \theta_i^{(n)} < k+1 }
    \,.
\]
Since the weights are non-increasing, letting $k_\varepsilon := \lfloor -\log \theta_{n_\varepsilon}^{(n)} \rfloor \ge 0$, we get
\begin{equation}\label{eq: LIS last indices with mk}
    \expec{\LIS{\tau_n^{[n_\varepsilon,n]}}}
    \le 2e^{3/2} \sum_{k\ge k_\varepsilon} \sqrt{ m_k^{(n,\varepsilon)} } \,.
\end{equation}
Now, recall that we assume $\theta_i^{(n)} := f\big(\frac{i-1}{n}\big)$.
%and let $(p_k)_{k\ge0}$ be as in Proposition~\ref{prop: growth rate with phi}.
Then, by Proposition~\ref{prop: growth rate with phi} and its proof (namely, \cref{eq: comparison on big p_k,eq: linear approx for small parameters,eq: comparison on small p_k}), we get that
\[
    \sum_{k\ge k_\varepsilon} \sqrt{ m_k^{(n,\varepsilon)} } \le 2 + 2\sum_{k\ge k_\varepsilon}\sqrt{np_k^{(\varepsilon)}} 
    \quad\text{where}\quad
    p_k^{(\varepsilon)} := \Leb\big( A_k \cap [1-\varepsilon,1) \big) \,.
\]
In order to argue that this divided by $\sqrt{n}$ goes to $0$ as $\varepsilon\to0$, we distinguish two cases.
\begin{enumerate}
    \item[(a)] Assume that $\inf_{[0,1)} f > 0$, i.e., $f$ is uniformly bounded below.
    Then $-\log f$ is bounded above on $[0,1)$, and thus, we can choose $\varepsilon > 0$ small enough for $\lfloor -\log f \rfloor$ to be constant on $(1-\varepsilon, 1)$.
    For such choice of $\varepsilon>0$, we have
    \[
        \sum_{k\ge k_\varepsilon} \sqrt{p_k^{(\varepsilon)}}
        = \sqrt{p_{k_\varepsilon}^{(\varepsilon)}}
        \le \sqrt{\varepsilon} 
        \cv{\varepsilon\to0^+} 0 \,.
    \]
    \item[(b)] Assume that $\inf_{[0,1)} f = 0$, i.e., $f(t)\to0$ as $t \uparrow 1$.
    Then, $k_\varepsilon \to \infty$ as $\varepsilon\to0$, and we simply bound ${p_k^{(\varepsilon)}}$ by ${p_k}$ with the notation of Proposition~\ref{prop: growth rate with phi}.
    Thanks to Lemma~\ref{lem: summable condition for sqrt n regime} and our assumption that $\expec{\LIS{\tau_n}} \asymp \sqrt n$, we know that $(\sqrt{p_k})_{k\ge0}$ is summable.
    Therefore,
    \[
        \sum_{k\ge k_\varepsilon} \sqrt{p_k^{(\varepsilon)}}
        \le \sum_{k\ge k_\varepsilon} \sqrt{p_k}
        \cv{\varepsilon\to0^+} 0 \,.
    \]
\end{enumerate}
Coming back to \eqref{eq: LIS last indices with mk}, we conclude that there exists an absolute constant $c_{\varepsilon} \ge 0$ (that only depends on $f$ and $\varepsilon$) such that
\begin{equation}\label{eq: bound on LIS last indices}
    \limsup_{n\to\infty} \frac{1}{\sqrt n} \expec{\LIS{\tau_n^{[n_\varepsilon,n]}}} \le c_{\varepsilon} \,,
\end{equation}
and that $c_{\varepsilon} \to 0$ as $\varepsilon\to0$.
It is clear that the same can be shown for $\sigma_n$, using the same approach complemented by the method behind Lemma~\ref{lem: discrete and continuous models share same groth rate}.

Now, recall that we assumed $\expec{\LIS{\tau_n}} \asymp \sqrt n$, and that the same holds for $\LIS{\sigma_n}$ thanks to Proposition~\ref{prop: growth rate with phi} and Lemma~\ref{lem: discrete and continuous models share same groth rate}.
Hence, we can find an absolute constant $C >0$ such that 
\begin{equation}\label{eq: total sqrt mass}
    \liminf_{n\to\infty} \frac{1}{\sqrt n} \expec{\LIS{\tau_n}}
    \ge C \,,
\end{equation}
and likewise for $\sigma_n$.
In particular, we deduce from \cref{eq: bound on LIS last indices,eq: total sqrt mass} that
\begin{equation}\label{eq: sqrt mass on first indices}
    \liminf_{n\to\infty} \frac{1}{\sqrt n} \expec{\LIS{ \tau_n^{[1,n_\varepsilon]} }}
    \ge C - c_{\varepsilon} 
    = C - o_{\varepsilon}(1) \,,
\end{equation}
and likewise for $\sigma_n$, where $o_{\varepsilon}(1)$ denotes a quantity that goes to $0$ as $\varepsilon\to0$.
From now on, we assume that $\varepsilon>0$ is chosen small enough so that $C-c_{\varepsilon} \ge \frac12 C$.

Finally, since $\tau_n^{[1,n_\varepsilon]}$ is Luce distributed, Corollary~\ref{cor: LIS is close to expectation if non-increasing weights} together with \eqref{eq: sqrt mass on first indices} imply that
\begin{equation}\label{eq: proba sqrt mass on first indices}
    \limsup_{n\to\infty} \prob{
    \LIS{ \tau_n^{[1,n_\varepsilon]} }
    \le \delta \sqrt n
    }
    \cv{\delta \to 0^+} 0 \,,
\end{equation}
i.e., $\LIS{ \left\{ \tau_n(i) : i\in[n_\varepsilon] \right\} }$ is of order $\sqrt n$ in expectation \emph{and} in probability.
Clearly, the same holds for $\sigma_n$.

\medskip\noindent\textbf{2. Comparing the discrete and the continuous models.}
Write 
\begin{align*}
    &\LIS{\left\{ 
    \tau_n(i) : i\in[n_\varepsilon] \setminus \cI_\unmatched
    \right\}}
    \\&\le
    \LIS{\left\{ 
    \tau_n(i) : i\in[n_\varepsilon] 
    \right\}}
    \\&\le
    \LIS{\left\{ 
    \tau_n(i) : i\in[n_\varepsilon] \setminus \cI_\unmatched
    \right\}}
    +
    \LIS{\left\{ 
    \tau_n(i) : i\in \cI_\unmatched
    \right\}} \,.
\end{align*}
Since the middle term is of order $\Theta(\sqrt n)$ by \cref{eq: sqrt mass on first indices,eq: proba sqrt mass on first indices}, item (iii) of our coupling (see Proposition~\ref{prop: coupling discrete and continuous}) implies that
\begin{equation*}
    \frac{\LIS{\left\{ 
    \tau_n(i) : i\in[n_\varepsilon] 
    \right\}}}{\LIS{\left\{ 
    \tau_n(i) : i\in[n_\varepsilon] \setminus \cI_\unmatched
    \right\}}}
    \cv{n\to\infty}
    1 \,,
\end{equation*}
in probability and for the ratio of expectations,
and likewise for $\sigma_n$.
Therefore, using item (i) of Proposition~\ref{prop: coupling discrete and continuous}:
\begin{equation}\label{eq: LIS tau and sigma are equivalent on first indices}
    \frac{\LIS{ \tau_n^{[1,n_\varepsilon]} }}{\LIS{ \sigma_n^{[1,n_\varepsilon]} }}
    \cv{n\to\infty}
    1 \,,
\end{equation}
in probability and for the ratio of expectations.
% Using \cref{eq: bound on LIS last indices,eq: total sqrt mass,eq: sqrt mass on first indices}, we get that
% \begin{equation}
%     \limsup_{n\to\infty}
%     \frac{\expec{\LIS{
%     \tau_n
%     }}}{\expec{\LIS{
%     \sigma_n
%     }}}
%     \le 1 + \frac{c_\varepsilon}{C - c_\varepsilon}
%     \quad\text{and}\quad
%     \liminf_{n\to\infty}
%     \frac{\expec{\LIS{
%     \tau_n
%     }}}{\expec{\LIS{ 
%     \sigma_n
%     }}}
%     \ge \left( 1 + \frac{c_\varepsilon}{C - c_\varepsilon} \right)^{-1} \,.
% \end{equation}
% Moreover, for any given $\delta\in(0,1)$ and $p\in(0,1)$, we can choose $\varepsilon = \varepsilon(\delta,p) >0$ small enough so that
% \begin{equation}
%     \limsup_{n\to\infty} \prob{ \frac{\LIS{\tau_n}}{\LIS{\sigma_n}} \in [1-\delta, 1+\delta] } \ge 1-p \,.
% \end{equation}
% Indeed, assume e.g.\ that $\LIS{\tau_n} \ge (1+\delta) \LIS{\sigma_n}$.

\medskip\noindent\textbf{3. Concluding for the continuous model.}
In this final step of our proof, we shall argue that, for any given $\varepsilon>0$:
\begin{equation}\label{eq: LLN before epsilon}
    2K_\varepsilon^- \le
    \liminf_{n\to\infty} \frac{1}{\sqrt{n}} \LIS{ \sigma_n^{[1,n_\varepsilon]} }
    \le \limsup_{n\to\infty} \frac{1}{\sqrt{n}} \LIS{ \sigma_n^{[1,n_\varepsilon]} }
    \le 2K_\varepsilon^+
\end{equation}
in expectation and in probability, where the constants $K_\varepsilon^-, K_\varepsilon^+$ are such that
\begin{equation}\label{eq: constant asymptotic in epsilon}
    %K_\varepsilon^- \le K 
    %\quad\text{and}\quad
    \lim_{\varepsilon\to0^+} K_\varepsilon^+ 
    = \lim_{\varepsilon\to0^+} K_\varepsilon^- 
    = K \,,
    \quad\text{where}\quad
    K:= \sup_{\phi \nearrow} \int_0^1 \sqrt{ \phi'(t) \rho_f(t,\phi(t)) } \mathrm{d}t \,.
\end{equation}
Before proving \cref{eq: LLN before epsilon,eq: constant asymptotic in epsilon}, let us see how they imply \Cref{th: LLN with variational constant}.
The lower bound is simple to derive:
\begin{equation*}
    \liminf_{n\to\infty} \frac{1}{\sqrt n} \LIS{\sigma_n}
    \ge \sup_{\varepsilon>0} 
    \liminf_{n\to\infty} \frac{1}{\sqrt n} \LIS{\sigma_n^{[1,n_\varepsilon]}}
    \ge 2K \,,
\end{equation*}
in probability and in expectation.
Note that from there, our assumption that $\expec{\LIS{\tau_n}} \asymp \sqrt{n}$ (and thus, the same holds for $\sigma_n$ by Proposition~\ref{prop: growth rate with phi} and Lemma~\ref{lem: discrete and continuous models share same groth rate}) implies that $K<\infty$.
%and likewise for $\tau_n$ thanks to \eqref{eq: LIS tau and sigma are equivalent on first indices}.

The corresponding upper bound is immediate for the expectation, using \eqref{eq: bound on LIS last indices}.
For the probability claim, consider $\delta>0$ and $\varepsilon>0$ small enough and write:
\begin{align*}
    &\prob{ \LIS{\sigma_n} \ge 2(1+3\delta)K\sqrt n }
    \\&\le o(1) + \prob{ \LIS{\sigma_n} \ge (1+2\delta) \LIS{\sigma_n^{[1,n_\varepsilon]}} }
    \\&\le o(1) + \prob{ \LIS{\sigma_n^{[n_\varepsilon,n]}} \ge 2\delta \LIS{\sigma_n^{[1,n_\varepsilon]}} }
    \\&\le o(1) + \prob{ \LIS{\sigma_n^{[n_\varepsilon,n]}} \ge \delta K \sqrt n }
    \\&\le o(1) + \frac{c_\varepsilon}{\delta K} \,,
\end{align*}
where we used \eqref{eq: LLN before epsilon} for the first and third inequalities, \eqref{eq: bound on LIS last indices} for the last inequality, and we let $o(1)$ denote a quantity that goes to $0$ as $n\to\infty$ and may depend on $\varepsilon>0$.
Since this holds for any given $\delta >0$ and $\varepsilon>0$ small enough, and since $c_\varepsilon\to0$ as $\varepsilon\to0^+$, we deduce that 
\begin{equation*}
    \limsup_{n\to\infty} \frac{1}{2\sqrt n} \LIS{\sigma_n}
    \le K 
\end{equation*}
in probability.
The same holds for $\tau_n$ thanks to \eqref{eq: LIS tau and sigma are equivalent on first indices}.

Now, it remains only to prove \cref{eq: LLN before epsilon,eq: constant asymptotic in epsilon}.
For $\alpha,\beta\in(0,1)$, write
\[
\sigma_n^{(\alpha,\beta)} := \Perm{\sigma_n(i) : i\le \alpha n , \sigma_n(i) \le \beta n}
\quad\text{and}\quad
\sigma_n^{\langle \alpha, \beta \rangle} := \Perm{ \sigma_n(i) : i\le \alpha n , \sigma_n(i) \ge \beta n } \,,
\]
and note that, letting $\alpha:= 1-\varepsilon$,
\begin{equation*}
    \LIS{\sigma_n^{(\alpha,\beta)}}
    \le \LIS{\sigma_n^{[1,n_\varepsilon]}}
    \le \LIS{\sigma_n^{(\alpha,\beta)}} + \LIS{\sigma_n^{\langle \alpha,\beta \rangle}} \,.
\end{equation*}
Thus, by Corollary~\ref{cor: bound on LIS north-west band} and Proposition~\ref{prop: LLN for restricted permuton samples},
\begin{equation*}
    2 K_{\alpha,\beta}
    \le \liminf_{n\to\infty} \frac{1}{\sqrt n} \LIS{\sigma_n^{[1,n_\varepsilon]}}
    \le \limsup_{n\to\infty} \frac{1}{\sqrt n} \LIS{\sigma_n^{[1,n_\varepsilon]}}
    \le 2K_{\alpha,\beta} + \eta_{\alpha,\beta} \,,
\end{equation*}
in probability and in expectation, for any given $\alpha := 1-\varepsilon \in (0,1)$ and $\beta\in(0,1)$.
Since $\eta_{\alpha,\beta} \to 0$ as $\beta\to1$, we can choose $\beta := \beta(\alpha)$ such that $\beta\ge\alpha$ and $\eta_{\alpha,\beta} \le 1-\alpha$.
With this choice of $\alpha := 1-\varepsilon$ and $\beta := \beta(\alpha)$, we get \eqref{eq: LLN before epsilon} with $K_\varepsilon^- := K_{\alpha, \beta(\alpha)}$ and $K_\varepsilon^+ := K_{\alpha, \beta(\alpha)} + \frac12 (1-\alpha)$.
Since $K_{\alpha, \beta(\alpha)} \to K$ as $\alpha\to1$ by Proposition~\ref{prop: LLN for restricted permuton samples}, this proves \eqref{eq: constant asymptotic in epsilon} and concludes the proof.
\end{proof}

\subsection{On the variational problem}
\label{sec: variational problem}

\begin{lemma}\label{lem: variational reformulate}
    Let $f:[0,1)\to(0,\infty)$ be a $\cC^1$ 
    %non-increasing
    function.
    Let
    \[
    K(f) := \sup_{\phi:[0,1)\nearrow [0,1)} \int_0^1 \sqrt{\phi'(t) \rho_f(t,\phi(t))} \mathrm dt
    \]
    be the limit constant of \Cref{th: LLN with variational constant}, where the supremum ranges over all non-decreasing, right-continuous curves $\phi : [0,1) \to [0,1)$.
    Then,
    \begin{equation}\label{eq: better variational formula}
    K(f)
    = \sup_{u:[0,1)\nearrow [0,\infty)} \int_0^1 \sqrt{f(t) u'(t) e^{-f(t)u(t)}} \mathrm dt
    \end{equation}
    where the supremum ranges over all non-decreasing, right-continuous curves $u : [0,1) \to [0,\infty)$.
\end{lemma}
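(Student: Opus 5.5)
The plan is to substitute $\phi = F\circ u$, equivalently $u = F^{-1}\circ\phi$, where $F$ is the increasing function of \eqref{eq: function F}. Since $f>0$, $F$ is a continuous, strictly increasing bijection from $[0,\infty)$ onto $[0,1)$, because $F(x)\to 1$ as $x\to\infty$ by dominated convergence. It is also $\cC^1$ with
\[
F'(x) = \int_0^1 f(s)e^{-xf(s)}\,\mathrm ds > 0 .
\]
This derivative is exactly the denominator in \eqref{eq: density of the Luce permuton} evaluated at $x=F^{-1}(y)$. Composition with $F$ or $F^{-1}$ preserves monotonicity and right-continuity, so $u\mapsto F\circ u$ is a bijection between the two admissible classes of curves.

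The key computation is the following. If $u$ is differentiable at $t$, then $\phi=F\circ u$ satisfies $\phi'(t) = F'(u(t))\,u'(t)$ and $F^{-1}(\phi(t)) = u(t)$. Hence
\[
\phi'(t)\rho_f(t,\phi(t)) = F'(u(t))u'(t)\cdot \frac{f(t)e^{-f(t)u(t)}}{F'(u(t))} = f(t)u'(t)e^{-f(t)u(t)} ,
\]
so the two integrands agree pointwise. The derivatives here are understood almost everywhere, since monotone curves are differentiable a.e. I would need to check that a.e. differentiability transfers through the composition. Where $u$ is differentiable, the chain rule applies because $F$ is $\cC^1$. Conversely, where $\phi$ is differentiable, $u=F^{-1}\circ\phi$ is differentiable because $F^{-1}$ is $\cC^1$ with $(F^{-1})'=1/F'(F^{-1})>0$. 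Together these give equality of the integrands a.e.

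The main point needing care is the interpretation of $\phi'$ in the Deuschel--Zeitouni functional. There, $\phi'$ is the a.e. derivative, so only the absolutely continuous part of $\phi$ contributes; jumps and singular parts contribute nothing. I would note that the same convention is in force in \eqref{eq: better variational formula}. The bijection $\phi\leftrightarrow u$ preserves this decomposition: jumps map to jumps, and the singular part maps to a singular part because $F$ and $F^{-1}$ are locally bi-Lipschitz on compacts. With that, the two functionals coincide curve by curve, and taking suprema over corresponding classes gives \eqref{eq: better variational formula}. The boundary behaviour needs no extra work, since $\phi$ takes values in $[0,1)$ exactly when $u$ takes values in $[0,\infty)$.
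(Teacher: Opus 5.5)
Your proposal is correct and follows the paper's own proof. Like the paper, you rewrite $\rho_f(x,y)=f(x)e^{-f(x)F^{-1}(y)}/F'(F^{-1}(y))$ and use the bijection $\phi\leftrightarrow u=F^{-1}\circ\phi$, under which the integrands agree almost everywhere by the chain rule; you just spell out the a.e.\ differentiability and class-bijection details that the paper calls straightforward. One harmless nitpick: $F'(0)=\int_0^1 f$ may be infinite if $f$ is unbounded near $1$, so $F$ is $\cC^1$ only on $(0,\infty)$. This only affects the initial interval where $u=0$, on which both integrands vanish a.e.
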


\begin{proof}
    Recall from \eqref{eq: function F} and \eqref{eq: density of the Luce permuton} that, for $(x,y) \in (0,1)^2$,
    \[
    \rho_f(x,y) = \frac{ f(x) e^{-f(x) F^{-1}(y)} }{ F'\big( F^{-1}(y) \big) } \,.
    \]
    The claimed formula is then straightforward by considering the mapping
    \[
    {\phi:[0,1)\nearrow [0,1)}
    \qquad\longleftrightarrow\qquad
    {u = F^{-1}\circ \phi :[0,1)\nearrow [0,\infty)} \,.
    \qedhere
    \]
\end{proof}

\begin{remark}
    It would not be difficult to see that we can restrict our attention to the curves ${u:[0,1)\nearrow [0,\infty)}$ that are continuous and such that $u(0)=0$.
    Indeed, each jump in the function would decrease the value of \eqref{eq: better variational formula}.
\end{remark}

Letting $A(u) := \int_0^1 \sqrt{f(t) u'(t) e^{-f(t)u(t)}} \mathrm dt$, we believe that $K(f) = A(u^*_f)$ for a unique $u^*_f \in \cC^1([0,1),[0,\infty))$, although we were not able to prove it and lack a plausible formula for $u^*_f$.
If that could be established, then it would also hold that the longest increasing subsequences in $\tau_n$ and $\sigma_{n,\mu_f}$ (with the notation of \Cref{th: LLN with variational constant}) concentrate around the curve $\phi^*_f := F \circ u^*_f$ in the unit square $[0,1]^2$.

% If $A$ reaches its maximum on $\cC^1_{\nearrow}([0,1])$, then the directional derivatives vanish there.
% If $u\in \cC^1_{\nearrow}([0,1],\bR_+)$ has positive derivative, and $\alpha \in \cC^1([0,1],\bR)$ with $\alpha(0)=0$, then for $\abs{t}$ small enough, $u+t\alpha \in \cC^1_{\nearrow}([0,1],\bR_+)$, and
% \[
%     \left. \frac{\mathrm{d}}{\mathrm{d}t} \right\vert_{t=0} A(u+t\alpha)
%     = \frac12 \int_0^1 \sqrt{f} e^{-\frac12 fu} \left( \frac{\alpha'}{\sqrt{u'}} - f\alpha\sqrt{u'} \right) \,.
% \]
% We would need to solve 
% \[
% \forall \alpha, \quad
% \left. \frac{\mathrm{d}}{\mathrm{d}t} \right\vert_{t=0} A(u+t\alpha) = 0 \,.
% \]
% One naive choice would be $u_0 = \frac{1}{f}-1$ (assuming $f(0)=1$).
% Then,
% \[
% \left. \frac{\mathrm{d}}{\mathrm{d}t} \right\vert_{t=0} A(u_0+t\alpha) = \frac{1}{2} \int_0^1 e^{\frac12(f-1)} \sqrt{\frac{f}{-f'}} (\alpha f)' \,.
% \]
% The function $\beta := \alpha f$ ranges over $\cC^1$ bridges, that is, $\beta(0)=0$, $\beta(1)=0$, $\beta\in \cC^1([0,1],\bR)$.
% Further, $\gamma := \beta'$ ranges over continuous functions with $\int_0^1 \gamma = 0$.
% But then, 
% \[
% \forall \gamma,\quad
% \int_0^1 e^{\frac12(f-1)} \sqrt{\frac{f}{-f'}} \gamma = 0
% \]
% is equivalent to $e^{\frac12(f-1)} \sqrt{\frac{f}{-f'}}$ being constant, which is not the case.
% So, $u_0 = \frac{1}{f}-1$ does not seem to be a local maximum of $A$...

Nonetheless, \eqref{eq: better variational formula} yields non-trivial bounds for $K(f)$.
For example, one might wish to consider the curve $u_1 = F^{-1}$, corresponding to $\phi = \id_{[0,1]}$, for which the computation of 
\[
A(u_1) = \int_0^1 \sqrt{\rho_f(t,t)} \mathrm{d}t
\]
is related to the length of increasing subsequences that are close to the diagonal.
However, computing $A(u_1)$ seems difficult, even for simple functions $f$.
A naive choice that is more tractable is 
%$u_1 := \frac{1}{f}$ and 
$u_0 := \frac{1}{f} - \frac{1}{f(0)}$, for which we find that
\begin{equation}\label{eq: lower bounds on Kf}
    K(f) 
    \ge 
    A(u_0) = \int_0^1 \sqrt{ \frac{-f'(t)}{f(t)} e^{\frac{f(t)}{f(0)}-1} } \mathrm dt
    %> A(u_1) = \int_0^1 \sqrt{ \frac{-f'(t)}{f(t)} e^{-1} } \mathrm dt 
    \,.
\end{equation}
Although $u_0$ does not seem to solve the variational problem of Lemma~\ref{lem: variational reformulate} (and neither does $u_1$; see, also, \Cref{fig: LIS curves}), let us derive the associated lower bounds in two cases:
\begin{itemize}
    \item With the Sukhatme function $f(t) = 1-t$, we find the lower bound
    \[
    K(f) \ge A(u_0) = \frac{2}{\sqrt{e}} \int_0^1 e^{s^2/2} \mathrm ds \,,
    \]
    with corresponding curves $u_0(t) = \frac{t}{1-t}$ and $\phi_0(t) = 1 - \frac{1-t}{t}\big( 1 - e^{-\frac{t}{1-t}} \big)$.
    See \Cref{fig: LIS curves}, middle.
    \item With the Mallows function $f_\beta(t) = e^{-\beta t}$, $\beta\ge0$, we find the lower bound
    \[
    K(f_\beta)
    \ge A(u_0) = \sqrt{\frac{\beta}{e}} \int_0^1 e^{\frac12 e^{-\beta t}} \mathrm dt \,,
    \]
    which can be expressed in terms of the exponential integral function.
    %In particular, $\sqrt{\beta/e} < A(u_0) < \sqrt{\beta}$.
    In particular, we derive the simpler bound $K(f_\beta) > \sqrt{\beta/e}$.
    Recalling that $K(f_\beta)\ge1$ by Corollary~\ref{cor: Luce LIS is at least 2 sqrt n}, we see that these bounds are not optimal in general.\footnote{
    This is unsurprising, considering that $u_0(1) = e^\beta-1 < \infty$ and thus $\phi_0(1) < 1$ in this example.
    More generally, for functions $f$ such that $f(1) > 0$, the fact that $\phi_0(1) < 1$ makes $\phi_0$ a rather bad candidate for the LIS curve.
    }
    Nonetheless, we can compare them with the constant 
    $
    \mathcal{L}_\beta = \beta^{-1/2} \mathrm{arsinh}\big( \sqrt{e^\beta-1} \big)
    $
    from \cite{Mueller_Starr_2013}, and deduce that $K(f_\beta) > \mathcal{L}_\beta$ for large enough $\beta$ (e.g., $\beta\ge7$).
\end{itemize}

\begin{figure}
    \centering
    \includegraphics[width=0.32\linewidth]{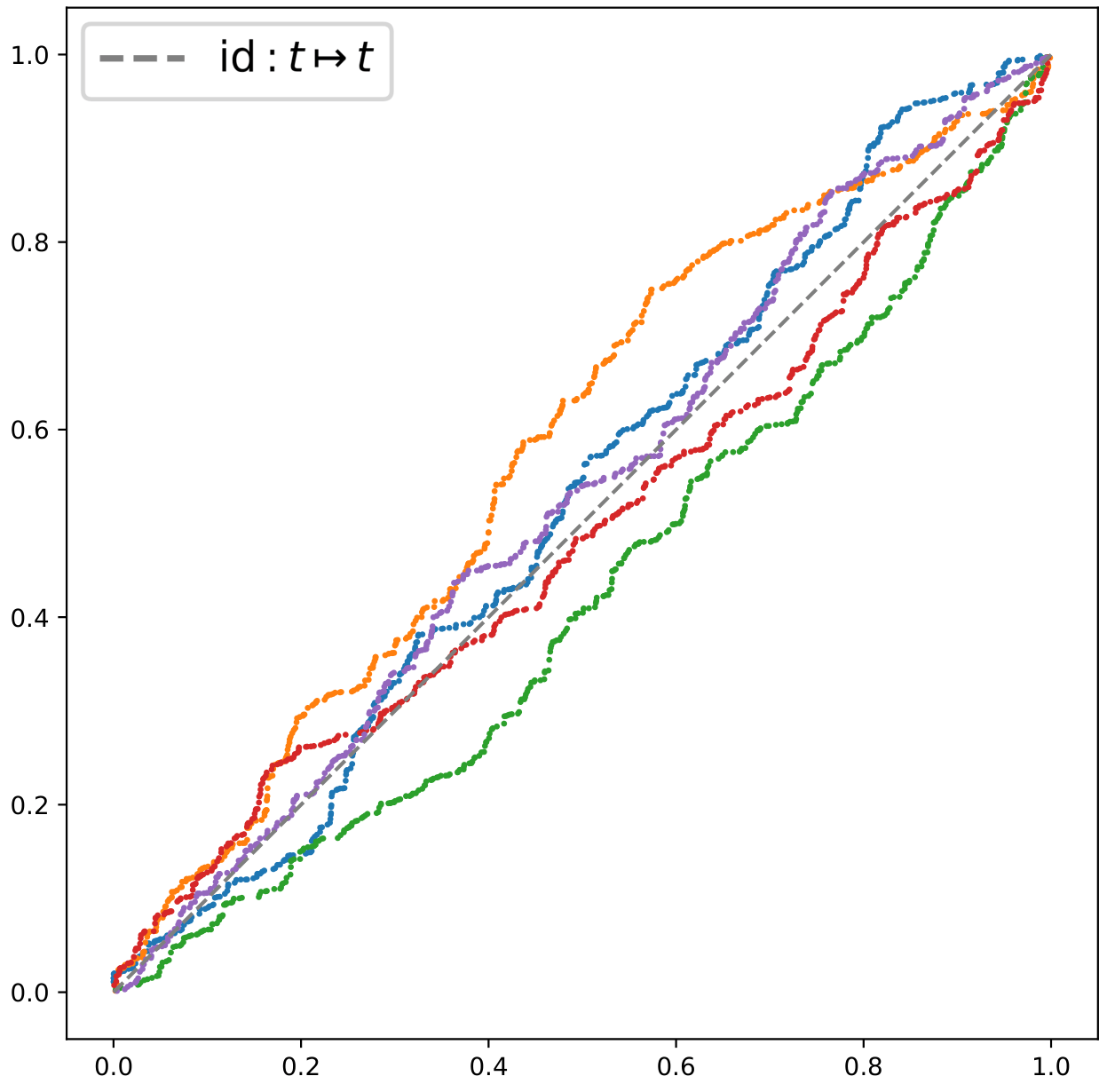}
    \includegraphics[width=0.32\linewidth]{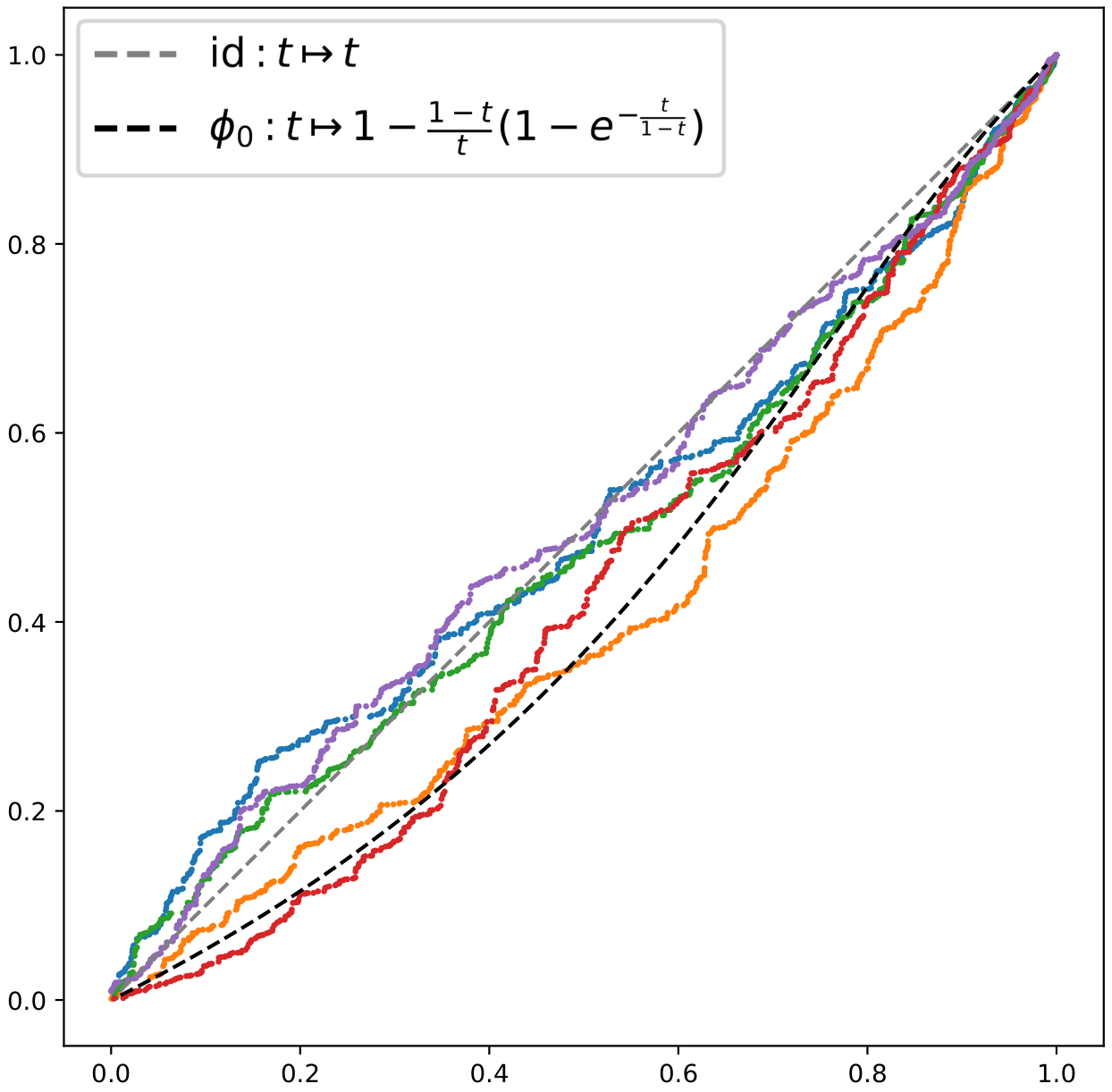}
    \includegraphics[width=0.32\linewidth]{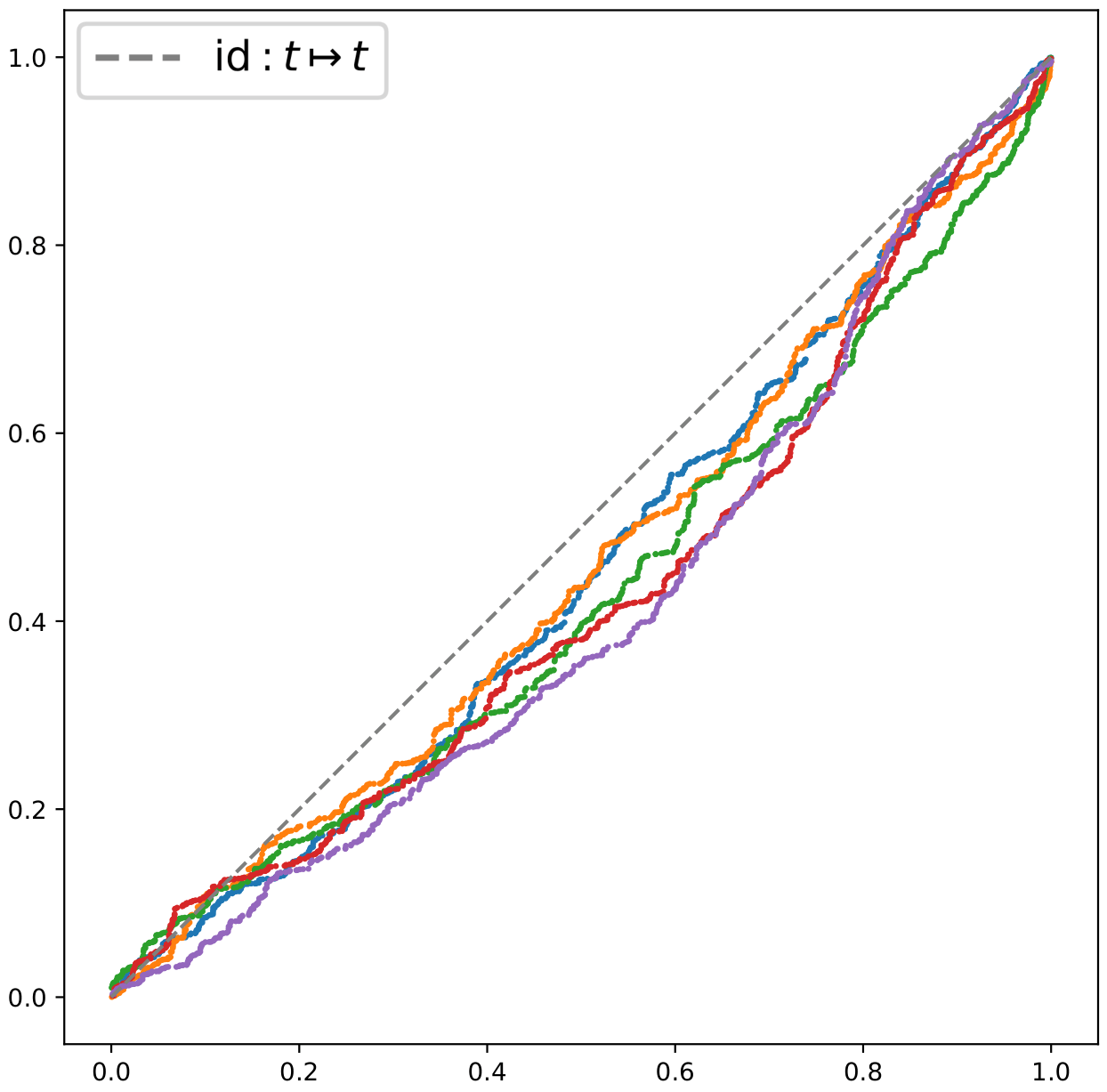}
    \caption{Simulations of the renormalized curves $\left\{ \big( \frac{I_k}{n} , \frac{\tau_n(I_k)}{n} \big) : 1\le k\le \LIS{\tau_n} \right\}$ of longest increasing subsequences in random Luce permutations $\tau_n$.
    The weights are uniform ($f(t)=1$) on the left, Sukhatme ($f(t)=1-t$) in the middle, and Mallows-like ($f_\beta(t) = e^{-\beta t}$) with $\beta=6$ on the right.
    For Sukhatme weights, the (non-optimal) curve $\phi_0 = F \circ u_0$ corresponding to $u_0 = 1/f - 1$ in Lemma~\ref{lem: variational reformulate} is also plotted.
    In all three cases, we sampled $5$ random permutations (each with a different color) of size $52 000$ and plotted an arbitrary longest increasing subsequence (the scattered points).
    In the case of uniform permutations, the longest increasing subsequences are indeed concentrated around the diagonal, and the fluctuations are described by geodesics in the directed landscape \cite{Dauvergne_Virag_2022}.}
    \label{fig: LIS curves}
\end{figure}

\section*{Acknowledgments}

V.D.\ is partially supported by the ANR project LOUCCOUM 
(Large Objects Under Combinatorial Constraints and Outside Uniform Models, 
 ANR-24-CE40-7809). 
J.B.\ is partially supported by NSF grant DMS-2441646 and an Alfred P.\ Sloan Research Fellowship.

\section*{AI Disclosure}

An AI tool was used for literature search and during the revision of this manuscript to check for possible errors, typographical mistakes, and language issues. It was not used to develop the mathematical ideas, results, or proofs.  All suggested changes were reviewed by the authors, who take full responsibility for the final content of the paper.

\appendix

\section{Three probability lemmas}

In this short appendix, we collect three general probability results that we use in our proofs.
The first one is an elementary estimate for the expectation of the square root of a binomial random variable; we include a quick proof for completeness.

\begin{lemma}\label{lem: sqrt of binomial}
    Let $N$ denote a binomial random variable with parameters $n\in\bN$ and {$p\in[0,1]$.}
    Then:
    \[
    \expec{\sqrt{N}} \asymp \min\{ np, \sqrt{np} \}
    \]
    uniformly over $n\in\bN$ and $p \in [0,1]$.
\end{lemma}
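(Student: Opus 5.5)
We split according to whether $np\le 1$ or $np>1$, and prove the upper and lower bounds separately in each regime.

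\textbf{Upper bound.} Since $N$ takes values in $\bN_0$, we have $\sqrt N\le N$ pointwise, so $\expec{\sqrt N}\le \expec{N}=np$. Jensen's inequality gives $\expec{\sqrt N}\le\sqrt{\expec{N}}=\sqrt{np}$. Together, $\expec{\sqrt N}\le\min\{np,\sqrt{np}\}$ with constant $1$, uniformly in $n$ and $p$.

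\textbf{Lower bound when $np\le1$.} Here $\min\{np,\sqrt{np}\}=np$. Since $\sqrt N\ge \One{N\ge1}$, it suffices to bound $\prob{N\ge1}=1-(1-p)^n\ge 1-e^{-np}$ from below. By concavity of $x\mapsto 1-e^{-x}$ on $[0,1]$, we have $1-e^{-x}\ge(1-e^{-1})x$ there. Hence $\expec{\sqrt N}\ge(1-e^{-1})np$.

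\textbf{Lower bound when $np>1$.} Here $\min\{np,\sqrt{np}\}=\sqrt{np}$, and we use a second-moment argument. On the event $\{N\ge np/2\}$ we have $\sqrt N\ge\sqrt{np/2}$, so $\expec{\sqrt N}\ge\sqrt{np/2}\,\prob{N\ge np/2}$. It remains to bound this probability from below by an absolute constant.
\begin{itemize}
\item If $np\ge 8$, Chebyshev's inequality gives $\prob{N<np/2}\le \frac{np(1-p)}{(np/2)^2}\le \frac4{np}\le\frac12$.
\item If $1<np<8$, observe that $np/2<4$ is only a bounded multiple of $1$, so a different estimate is more convenient. We use $\sqrt N\ge\One{N\ge1}$ again, which gives $\expec{\sqrt N}\ge 1-e^{-np}\ge 1-e^{-1}\ge (1-e^{-1})\sqrt{np}/\sqrt8$.
\end{itemize}
Alternatively, Paley--Zygmund applied to $N$ handles both subcases at once, but the split above is elementary.

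Combining the three regimes yields $\expec{\sqrt N}\asymp\min\{np,\sqrt{np}\}$ with absolute constants. The only mild subtlety is the intermediate range $np\in(1,8)$, where Chebyshev alone is too weak; the indicator bound $\sqrt N\ge\One{N\ge1}$ resolves it.
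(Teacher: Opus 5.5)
Your proof is correct and follows essentially the same route as the paper. The upper bound uses $\sqrt N\le N$ and Jensen, and the lower bound splits into a small-$np$ regime, where you bound by the probability that $N$ is positive, and a large-$np$ regime, where you use Chebyshev. The differences are cosmetic: the paper takes threshold $np\le 3$, uses $\prob{N=1}=np(1-p)^{n-1}$ (hence its assumption $n\ge 4$), and applies Chebyshev at deviation $\sqrt{2np}$, which avoids your intermediate range $1<np<8$; your bound $\prob{N\ge 1}\ge 1-e^{-np}$ has the small advantage of working for every $n\in\bN$ without that restriction.
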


\begin{proof}
    For simplicity, we assume that $n\ge4$. 
    Then, we split the proof into two regimes.

    \medskip\noindent\textbf{Small parameters.}
    First, assume that $np\le 3$.
    For the upper bound, observe that $\sqrt{N} \le N$ a.s., and thus, $\expec{\sqrt{N}} \le np$.
    For the lower bound, we have 
    \[
        \expec{\sqrt{N}} \ge np(1-p)^{n-1} \ge np\big( 1 - \tfrac{3}{n} \big)^n \,,
    \]
    and the conclusion easily follows.
    
    \medskip\noindent\textbf{Large parameters.}
    Now, assume that $np > 3$.
    For the upper bound, observe that $\expec{\sqrt{N}} \le \sqrt{np}$ by Jensen's inequality.
    For the lower bound, we have 
    \[
        \prob{ \abs{N - np} \ge \sqrt{2np} }
        \le \frac12
    \]
    by the Bienaymé--Tchebychev inequality, and thus:
    \[
        \expec{\sqrt{N}} 
        \ge \sqrt{ np - \sqrt{2np} } \prob{ N \ge np - \sqrt{2np} }
        \ge \frac12 \sqrt{1-\sqrt{\frac23}} \sqrt{np} \,,
    \]
    and the conclusion follows.
\end{proof}

Then, we state a useful result about the coupling of exponential random variables.

\begin{lemma}\label{lem: exponential coupling}
    Fix $\eta \in (0,1/2)$ and $\delta \in (0,\eta/2)$.
    Let $\theta, \theta' \in [\eta, 1]$, and assume that $\abs{\theta - \theta'} \le \delta$.
    Then, there exists a coupling of $E \sim \Exp{\theta}$ and $T \sim \Exp{\theta'}$ such that the following holds:
    \begin{enumerate}[label=(\roman*)]
        \item $\prob{E \ne T} \le \frac{2\delta}{\eta}\big( 1 + \log\frac{1}{2\delta} \big)$;
        \item the law of $E$ conditionally on the event $\{ E\ne T \}$ has a density $r_{\theta}$ on $[0,\infty)$ satisfying $r_\theta \le R g$, where $R := \frac{1}{\eta} \frac{1-\log({2\delta})}{1-\log({2\delta})+\log\eta}$ and $g$ is a probability density that depends only on $(\delta,\eta)$.
    \end{enumerate}
\end{lemma}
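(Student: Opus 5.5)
We construct the coupling by the classical maximal coupling of the two densities, and then read off both properties from explicit formulas. Write $p(x) := \theta e^{-\theta x}$ and $p'(x) := \theta' e^{-\theta' x}$ for the densities of $E$ and $T$ on $[0,\infty)$. Set $m := \min\{p,p'\}$ and $\kappa := \int_0^\infty m$, so that $1-\kappa = d_{TV}(\Exp{\theta},\Exp{\theta'})$. We use the standard maximal coupling. With probability $\kappa$, draw a common value $E=T$ from $m/\kappa$. With probability $1-\kappa$, draw $E$ from $(p-m)/(1-\kappa)$ and $T$ independently from $(p'-m)/(1-\kappa)$. These two residual densities have disjoint supports up to null sets, so $E\ne T$ a.s.\ in that case. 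Hence $\prob{E\ne T}=1-\kappa$, and conditionally on $\{E\ne T\}$ the law of $E$ has density $r_\theta=(p-m)/(1-\kappa)$.

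For item (i), we bound $1-\kappa=\int (p-p')_+ \le \frac12\int\abs{p-p'}$. Pointwise we have
\[
\abs{p(x)-p'(x)}\le \abs{\theta-\theta'}e^{-\eta x}+ \theta\, e^{-\eta x}\abs{1-e^{-\abs{\theta-\theta'}x}},
\]
obtained by writing $p-p'=(\theta-\theta')e^{-\theta' x}+\theta(e^{-\theta x}-e^{-\theta' x})$. The first term integrates to at most $\delta/\eta$. For the second, we use $1-e^{-\delta x}\le \min\{1,\delta x\}$, which gives at most $\int_0^\infty e^{-\eta x}\min\{1,\delta x\}\,\mathrm dx$. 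Splitting at $x_0:=\frac{1}{\eta}\log\frac{1}{2\delta}$ (or at $1/\delta$) produces a $\frac{\delta}{\eta}\log\frac1{2\delta}$-type contribution. We then tune constants so the total is at most $\frac{2\delta}{\eta}(1+\log\frac{1}{2\delta})$. Alternatively, the total variation between two exponentials is explicit: the crossing point is $x^*=\log(\theta/\theta')/(\theta-\theta')$, and $1-\kappa=\abs{e^{-\theta x^*}-e^{-\theta' x^*}}$. This can be bounded via the mean value theorem by $\delta x^* e^{-\eta x^*}$. That is cleaner and gives a bound of order $\delta/\eta$, which is certainly below the claimed one.

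For item (ii), we need a density $g$ depending only on $(\delta,\eta)$ with $(p-m)\le (1-\kappa)Rg$. This is the main obstacle, because dividing by $1-\kappa$, which may be tiny, is dangerous. We first bound $p-m=(p-p')_+$ pointwise. By the same decomposition, $(p-p')_+(x)\le \delta e^{-\eta x}(1+\min\{x,1/\delta\})$, or more sharply $\le \min\{\delta(1+x),1\}e^{-\eta x}$. The difficulty is that $1-\kappa$ can be much smaller than $\delta$ when $\theta\approx\theta'$. In that regime, however, $(p-p')_+$ is comparably small: both $(p-p')_+$ and $1-\kappa$ scale linearly in $\abs{\theta-\theta'}$ to first order. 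We would therefore compare them with $\Delta:=\abs{\theta-\theta'}$ in place of $\delta$. Concretely, $(p-p')_+(x)\le \Delta(1+x)e^{-\eta' x}$ with $\eta'$ slightly less than $\eta$, and $1-\kappa\ge c\,\Delta/\theta$ from the explicit formula at $x^*$. Then $r_\theta\le C\eta^{-1}(1+x)e^{-\eta x}\wedge\ldots$, and we take $g$ to be the normalization of $\min\{1,\ \delta(1+x)\}e^{-\eta x}$-like envelope, truncated at $x\sim\frac1\eta\log\frac1{2\delta}$. The normalizing constant of $g$ then produces the factor $\frac{1-\log(2\delta)}{1-\log(2\delta)+\log\eta}$ appearing in $R$. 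If the linear-in-$\Delta$ comparison proves delicate for large $x$, we fall back on the crude envelope $e^{-\eta x}\min\{1,\cdot\}$ and accept a different absolute constant $R$. Only boundedness of $R$ in $n$ is used in Proposition~\ref{prop: coupling discrete and continuous}.
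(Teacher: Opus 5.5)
Your route differs from the paper's, and once two estimates are written out it is sound. However, it proves the lemma with a different constant $R$, and two of your side claims are wrong.

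\textbf{How the paper avoids the difficulty.} The paper deliberately does \emph{not} use the maximal coupling. Since $\abs{f_1-f_2}\le 2\delta$ pointwise, $h=(f_1-2\delta)_+$ lies below both densities, and the paper uses $h$ as the common part. The residual $f_1-h=\min\{f_1,2\delta\}$ then has mass $\delta_\theta=\frac{2\delta}{\theta}\big(1+\log\frac{\theta}{2\delta}\big)\ge 2\delta\big(1+\log\frac{1}{2\delta}\big)$. This lower bound does not depend on $\abs{\theta-\theta'}$, so the obstacle you identify (dividing by a possibly tiny total variation) never arises. Moreover, $\min\{f_1,2\delta\}$ is dominated by $2\delta\One{[0,\xi)}+e^{-\eta x}\One{[\xi,\infty)}$ with $\xi=\frac1\eta\log\frac1{2\delta}$, whose mass is exactly $\frac{2\delta}{\eta}\big(1+\log\frac1{2\delta}\big)$. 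That is where both the bound in (i) and the stated $R$ come from. The price is a superfluous $\log\frac1\delta$ in $\prob{E\ne T}$, which accounts for a factor $\log n$ in the $\cO\big(\sqrt n(\log n)^{3/2}\big)$ of Proposition~\ref{prop: coupling discrete and continuous}.

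\textbf{What to fix in your version.} Write out the two linear-in-$\Delta$ estimates, with $\Delta=\abs{\theta-\theta'}$, since they are the whole proof of (ii).
\begin{itemize}
\item Upper bound: $\abs{p(x)-p'(x)}=\abs{\int_{\theta'}^{\theta}e^{-\lambda x}(1-\lambda x)\,\mathrm d\lambda}\le\Delta(1+x)e^{-\eta x}$. There is no need for $\eta'<\eta$.
\item Lower bound: with $\lambda_+:=\max\{\theta,\theta'\}$ and $\lambda_-:=\min\{\theta,\theta'\}$, testing on $[0,1/\lambda_+]$ gives $d_{\mathrm{TV}}\ge e^{-\lambda_-/\lambda_+}-e^{-1}\ge\Delta/e$.
\end{itemize}
Hence $r_\theta\le e(1+x)e^{-\eta x}$. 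So you may take $g(x)=\frac{\eta^2}{1+\eta}(1+x)e^{-\eta x}$ and $R=e(1+\eta)/\eta^2$, with no dependence on $\delta$. The case $\theta=\theta'$ needs a convention, since the conditioning event is then null. By symmetry the same bound holds for $T$ given $\{E\ne T\}$, which item (iii) of the proposition also uses.

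For (i), keep the crossing-point computation, which gives $\prob{E\ne T}\le\delta/(e\eta)$. Your first route does not give the claimed order. The integral $\int_0^\infty e^{-\eta x}\min\{1,\delta x\}\,\mathrm dx$ is of order $\delta/\eta^2$; splitting at $x_0$ yields $\frac{\delta}{\eta^2}\log\frac1{2\delta}$, not $\frac{\delta}{\eta}\log\frac1{2\delta}$. That bound fails to imply the stated one when $\eta$ is small.

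\textbf{The stated $R$ is out of reach.} Drop the closing claim that a truncated $\min\{1,\delta(1+x)\}e^{-\eta x}$ envelope ``produces the factor'' in the stated $R$. After dividing by $d_{\mathrm{TV}}\asymp\Delta$, all dependence on $\delta$ cancels. In fact the maximal coupling cannot reach that $R$ in general. As $\theta'\to\theta$ from either side, $r_\theta$ tends to $e\theta\abs{1-\theta x}e^{-\theta x}$ restricted to one side of $1/\theta$. The envelope of these profiles over $\theta\in[\eta,1]$ has mass roughly $2.8$ for $\eta$ near $1/2$, whereas the stated $R$ tends to $1/\eta\approx 2$ as $\delta\to0$.

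\textbf{Why it does not matter for the paper.} Proposition~\ref{prop: coupling discrete and continuous} only needs $R$ bounded in $n$, so your version suffices for the application. It would even sharpen items (ii) and (iii) of that proposition to $\cO(\sqrt{n\log n})$ and $\cO\big((n\log n)^{1/4}\big)$.
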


\begin{proof}
    This is a standard approach by (almost) maximal coupling.
    Let $f_1$ and $f_2$ denote the densities of the $\Exp{\theta}$ and $\Exp{\theta'}$ distributions, respectively.
    Let us write $F: (\lambda,x) \in (0,\infty)^2 \mapsto \lambda e^{-\lambda x}$, so that $F(\theta, \cdot) = f_1(\cdot)$ and $F(\theta', \cdot) = f_2(\cdot)$.
    A quick computation shows that $\abs{ \partial_\lambda F(\lambda, x) } \le 2$ for all $(\lambda, x) \in (0,\infty)^2$, and thus:
    \begin{equation*}
        \text{for all } x\ge0,\quad
        \abs{ f_1(x) - f_2(x) } \le 2\delta \,.
    \end{equation*}
    Then, let $h : x\in[0,\infty) \mapsto \max\{ 0 , f_1(x) - 2\delta \}$.
    Let $x_\theta := \frac1\theta \log\frac{\theta}{2\delta} \ge 0$ and $\delta_\theta := \frac{2\delta}{\theta}\big( 1 + \log\frac{\theta}{2\delta} \big)$, so that
    \begin{equation*}
        h = \big( f_1 - 2\delta \big)\One{[0,x_\theta]}
        \quad\text{and}\quad
        \int_0^\infty \big( f_1(x) - h(x) \big) \mathrm dx = \delta_\theta \,.
    \end{equation*}
    We couple $E \sim \Exp{\theta}$ and $T \sim \Exp{\theta'}$ as follows:
    \begin{itemize}
        \item Let $U, P, Q_1, Q_2$ be four independent random variables such that $U$ is uniformly distributed on $[0,1]$, $P$ has density $\frac{1}{1-\delta_\theta} h$, and for $i\in\{1,2\}$, $Q_i$ has density $\frac{1}{\delta_\theta} (f_i - h)$ on $[0,\infty)$;
        \item if $U \le 1 - \delta_\theta$, then we let $E = T = P$;
        \item otherwise, if $U > 1 - \delta_\theta$, we let $E = Q_1$ and $T = Q_2$.
    \end{itemize}
    It is clear that this properly defines a coupling of $E \sim \Exp{\theta}$ and $T \sim \Exp{\theta'}$.
    Furthermore, since $Q_1 \ne Q_2$ a.s.,
    \[
        \prob{E \ne T} = \delta_\theta \le 
        \frac{2\delta}{\eta}\big( 1 + \log\frac{1}{2\delta} \big)
        =: \varepsilon \,,
    \]
    proving the first item.
    Then, conditionally on the event $\{ E \ne T \}$, the random variable $E$ has the density
    \[
        r_\theta := \frac{1}{\delta_\theta}\big( f_1 - h \big)
        = \frac{1}{\delta_\theta}\big( 2\delta\One{[0,x_\theta)} + f_1 \One{[x_\theta,\infty)} \big) \,.
    \]
    Observing that
    \[
        x_\theta \le \frac{1}{\eta}\log\frac{1}{2\delta} =: \xi \,,
    \]
    we can write, for all $x\ge0$,
    \[
        f_1(x) - h(x) \le 
        2\delta\One{[0,\xi)}(x)
        + e^{-\eta x} \One{[\xi, \infty)}(x) =: \tilde g(x) \,,
    \]
    where $\tilde g$ has a total mass $\varepsilon$ on $[0,\infty)$.
    Hence, we can write 
    $
        r_\theta \le \frac{\varepsilon}{\delta_\theta} \cdot \big( \frac{1}{\varepsilon} \tilde g \big)
    $
    and the conclusion follows.
\end{proof}

Finally, we recall the famous Dvoretzky--Kiefer--Wolfowitz (DKW) inequality.

\begin{theorem}\label{th: DKW inequality}
    Let $(X_i)_{i\ge1}$ be a sequence of i.i.d.\ real random variables, with cumulative distribution function $F$.
    For each $n\in\bN$, let
    \[
        F_n : x\in\bR \mapsto \frac1n \sum_{1\le i\le n} \One{X_i \le x}
    \]
    denote the associated empirical distribution function.
    Then, for all $t\ge0$,
    \[
        \prob{ \sup_{x\in\bR} \abs{F_n(x) - F(x)} > t }
        \le 2e^{-2nt^2} \,.
    \]
\end{theorem}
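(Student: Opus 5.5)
The plan is to reduce to the uniform case, split into one-sided deviations, and prove each one-sided bound with the sharp constant. The sharp constant is the only real difficulty; it is Massart's refinement of the original Dvoretzky--Kiefer--Wolfowitz bound, and in practice I would cite it.

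\textbf{Step 1: reduction to uniforms.} Let $(U_i)_{i\ge1}$ be i.i.d.\ uniform on $[0,1]$ and set $X_i := F^{-1}(U_i)$, with $F^{-1}$ the generalized (left-continuous) inverse. Then $X_i \le x$ if and only if $U_i \le F(x)$. Writing $G_n$ for the empirical distribution function of the $U_i$'s, this gives $F_n(x) - F(x) = G_n(F(x)) - F(x)$ for every $x$. Hence
\[
\sup_{x\in\bR}\abs{F_n(x)-F(x)} \le \sup_{s\in[0,1]}\abs{G_n(s)-s} ,
\]
with equality when $F$ is continuous. So it suffices to treat $F = \id_{[0,1]}$.

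\textbf{Step 2: splitting into one-sided deviations.} Set $D_n^+ := \sup_s (G_n(s)-s)$ and $D_n^- := \sup_s (s-G_n(s))$. The map $U_i\mapsto 1-U_i$ exchanges them up to null sets, so $D_n^+$ and $D_n^-$ have the same law. By a union bound, the claim follows from the one-sided estimate
\[
\prob{D_n^+ > t} \le e^{-2nt^2} .
\]
Only the case $e^{-2nt^2} < 1/2$ needs proof; otherwise the right-hand side $2e^{-2nt^2}$ of the theorem is at least $1$ and there is nothing to show.

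\textbf{Step 3: the one-sided bound.} The probability $\prob{D_n^+ > t}$ has the exact Smirnov--Birnbaum--Tingey representation
\[
\prob{D_n^+ > t} = t\sum_{j=0}^{\lfloor n(1-t)\rfloor} \binom{n}{j} \Big(1-t-\frac jn\Big)^{n-j}\Big(t+\frac jn\Big)^{j-1} .
\]
This comes from a ballot-type decomposition over the first index $j$ at which the empirical process crosses the line $s\mapsto s+t$. The remaining task is to bound this sum by $e^{-2nt^2}$. One compares each term to a Gaussian-type profile in $j$ and controls the resulting sum using Stirling estimates and the convexity of the relevant exponent.

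\textbf{Main obstacle.} Step 3 is where the difficulty lies, specifically getting the constant $1$ in front of $e^{-2nt^2}$ uniformly in $n$ and $t$ in the regime $e^{-2nt^2}<1/2$. This is exactly Massart's 1990 theorem and requires delicate term-by-term analysis. A cheap alternative gives a weaker but still adequate result. Discretize $[0,1]$ at $\lceil 2/t\rceil$ points, apply Hoeffding's inequality at each point, and use monotonicity of $G_n$ between grid points. This yields a bound of the form $C t^{-1} e^{-nt^2/2}$. That would suffice for the paper's use in Proposition~\ref{prop: coupling discrete and continuous} after adjusting $t_n$ to $C\sqrt{\log n}$, but not for the stated constant. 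For the statement as written, I would cite Massart for Step 3 and present Steps 1--2 in full.
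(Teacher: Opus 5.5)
Your proposal is correct and matches how the paper handles this statement. The paper gives no proof and simply recalls the DKW inequality with Massart's sharp constant. You likewise cite Massart for the one nontrivial step, after spelling out valid standard reductions (quantile coupling to uniforms, the symmetry $U_i\mapsto 1-U_i$, and a union bound), and you correctly observe that the one-sided bound is only needed when $e^{-2nt^2}<1/2$, a regime covered by Massart's one-sided theorem ($\sqrt n\,t\ge\sqrt{(\log 2)/2}$).

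Your side remark is also right: a grid-plus-Hoeffding bound $Ct^{-1}e^{-nt^2/2}$, with $t_n=C\sqrt{\log n}$, would suffice for the paper's uses in Proposition~\ref{prop: coupling discrete and continuous} and Corollary~\ref{cor: bound on LIS north-west band}.
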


We restate it in the case of uniform random variables for our applications.

\begin{corollary}\label{cor: DKW inequality for uniform variables}
    Let $(U_i)_{1\le i\le n}$ be a sequence of i.i.d.\ uniform random variables on $[0,1]$, and let $U_{(1)} < \ldots < U_{(n)}$ denote their (a.s.) increasing reordering.
    Then, for all $t\ge0$,
    \[
        \prob{ \sup_{1\le i\le n} \abs{U_{(i)} - \frac{i-1}{n}} > t }
        \le 2e^{-2nt^2} \,.
    \]
\end{corollary}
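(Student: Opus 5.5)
The plan is to deduce the statement directly from the DKW inequality (\Cref{th: DKW inequality}) applied to the uniform distribution function $F(x)=x$ on $[0,1]$. Let $F_n$ be the empirical distribution function of $U_1,\ldots,U_n$. The key observation is that at the order statistics the empirical function takes explicit values: a.s.\ $F_n(U_{(i)}) = i/n$, and its left limit is $F_n(U_{(i)}^-) = (i-1)/n$. The goal is therefore to show that, on the event $\sup_{x}\abs{F_n(x)-x}\le t$, every $\abs{U_{(i)} - \frac{i-1}{n}}$ is at most $t$. The inequality then follows by taking complements.

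I would carry this out in two steps.

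\textbf{Step 1.} I would evaluate the sup-bound at $x=U_{(i)}$. This gives $\abs{i/n - U_{(i)}} \le t$, so $U_{(i)} - \frac{i-1}{n} \ge -t + \frac1n \ge -t$.

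\textbf{Step 2.} I would apply the bound at points $x<U_{(i)}$ and let $x\uparrow U_{(i)}$. Since the sup over $\bR$ controls left limits as well, this yields $\abs{(i-1)/n - U_{(i)}}\le t$, which is exactly the desired two-sided bound. Combining the two steps gives
\[
\left\{\sup_x\abs{F_n(x)-x}\le t\right\} \subseteq \left\{\sup_i\abs{U_{(i)}-\tfrac{i-1}{n}}\le t\right\}
\]
almost surely, and hence
\[
\prob{\sup_i\abs{U_{(i)}-\tfrac{i-1}{n}}>t}\le \prob{\sup_x\abs{F_n(x)-x}>t}\le 2e^{-2nt^2}.
\]

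The only delicate point, and the main obstacle, is the left-limit argument in Step 2. Without it, one would naturally compare with $i/n$ rather than $(i-1)/n$ and lose a term $1/n$. This is handled by noting that $\abs{F_n(x)-x}\le t$ for all $x<U_{(i)}$, together with $F_n(x)=(i-1)/n$ on $[U_{(i-1)},U_{(i)})$, which gives $\abs{(i-1)/n - x}\le t$ there; letting $x\uparrow U_{(i)}$ closes the argument. The exceptional null set where ties occur is irrelevant.
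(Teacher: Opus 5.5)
Your proof is correct and matches the paper, which gives no separate argument and treats the corollary as the DKW inequality specialized to $F(x)=x$. Your left-limit step ($F_n\equiv (i-1)/n$ on $[U_{(i-1)},U_{(i)})$, then letting $x\uparrow U_{(i)}$) supplies the detail the paper leaves implicit. It also already yields the two-sided bound $\abs{U_{(i)}-\frac{i-1}{n}}\le t$ by itself, so your Step 1 is redundant.
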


\bibliographystyle{alphaurl}
\bibliography{bibli}

\end{document}